\documentclass[20pt, oneside]{article}   	
\usepackage{geometry}                		
\usepackage{amsmath}
\usepackage{amsthm}	
\usepackage{amssymb}
\usepackage{tikz}
\usepackage[implicit]{hyperref}
\usepackage[bottom]{footmisc}
\usepackage[affil-it]{authblk}
\usepackage[toc,page]{appendix}
\newtheorem{thm}{Theorem}[section]

\newtheorem{prop}[thm]{Proposition}
\newtheorem{note}[thm]{Note}
\newtheorem{lem}[thm]{Lemma}
\newtheorem{cor}[thm]{Corollary}
\newtheorem*{claim*}{Claim}

\newtheorem*{thm*}{Theorem}

\newtheorem{term}[thm]{Terminology}

\newtheorem{ex}[thm]{Example}
\newtheorem{man}{Theorem}

\theoremstyle{definition}
\newtheorem*{rem*}{Remark}
\newtheorem{defi}[thm]{Definition}
\newtheorem{rem}[thm]{Remark}
\newtheorem{conv}[thm]{Convention}

\newcommand{\F}{\mathbb{F}}

\newcommand{\fq}{\mathfrak{q}}

\newcommand{\fp}{\mathfrak{p}}

\newcommand{\M}{\mathcal{M}}
\newcommand{\rsep}{r_{\rm sep}}

\newcommand{\hookdownarrow}{\mathrel{\rotatebox[origin=c]{-90}{$\hookrightarrow$}}}
\newcommand{\xdownarrow}[1]{%
  {\left\downarrow\vbox to #1{}\right.\kern-\nulldelimiterspace}
}

\title{On singular moduli for higher rank Drinfeld modules}
\author{Chien-Hua Chen \thanks{Electronic address: \texttt{danny30814@ncts.ntu.edu.tw}; ORCID: \texttt{0000-0003-3267-5603} ; Corresponding author}}
\affil{Mathematics Division,\\ National Center for Theoretical Sciences,\\ Taipei, Taiwan}

\begin{document}

\maketitle
\abstract
In this paper, we generalize Dorman's work to estimate singular moduli for higher rank Drinfeld modules. In particular, we give a lower bound on the valuation of singular moduli for Drinfeld modules with complex multiplication by an imaginary field extension over the rational function field. Furthermore, we compute several examples for rank-$3$ case.

\section {Introduction}

In the theory of arithmetic of elliptic curves, a singular modulus is defined to be the $j$-invariant of an elliptic curve with complex multiplication. In \cite{GZ85}, Gross and Zagier introduced a systematic method to estimate the difference of two singular moduli: 

\begin{thm*}[\cite{GZ85}, Theorem 1.3]
Let $d_1$, $d_2$ be two fundamental discriminant of imaginary quadratic fields. Assume further that $d_1$ and $d_2$ are coprime to each other. Let $w_1$, $w_2$ be the number of roots of unity in the quadratic orders of discriminant $d_1$, $d_2$, respectively. Define
$$J(d_1,d_2):=\left( \prod_{[\tau_1], [\tau_2]\ {\rm with\ disc }(\tau_i)=d_i} (j(\tau_1)-j(\tau_2))      \right)^{\frac{4}{w_1w_2}}.$$

Then we have
$$J(d_1,d_2)^2=\pm\prod_{x\in\mathbb{Z}\ \textrm{and } n,n'\in\mathbb{Z}_{>0} \textrm{ with }x^2+4nn'=d_1d_2}n^{\epsilon(n')}.$$
Here $\epsilon(n')$ is an explicit map defined in terms of  Legendre symbols.

\end{thm*}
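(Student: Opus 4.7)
The plan is to compute the integer $J(d_1,d_2)^2$ by determining its $\ell$-adic valuation prime by prime. Since each singular modulus $j(\tau_i)$ is an algebraic integer lying in the Hilbert class field $H_{K_i}$ of $K_i=\mathbb{Q}(\sqrt{d_i})$, the product $J(d_1,d_2)$ lies in the compositum $H_{K_1}H_{K_2}$; taking $w_1w_2/4$-th powers builds in Galois invariance over $\mathbb{Q}$, so $J(d_1,d_2)^2\in\mathbb{Z}$. After confirming integrality it suffices to analyze, at each prime $\fp\mid\ell$ of the compositum, the valuation $v_{\fp}(j(\tau_1)-j(\tau_2))$.

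First I would show that $\fp\mid(j(\tau_1)-j(\tau_2))$ forces the CM elliptic curves $E_1,E_2$ with $j$-invariants $j(\tau_i)$ to have isomorphic reductions modulo $\fp$. The coprimality of $d_1$ and $d_2$ ensures that the reduced endomorphism ring would contain embeddings of the maximal orders of two distinct imaginary quadratic fields; no imaginary quadratic order can absorb both, so the reduction must be supersingular, and in particular $\ell$ is non-split in both $K_1$ and $K_2$. Only such primes $\ell$ contribute to $J(d_1,d_2)^2$.

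At a supersingular prime, Deuring's correspondence identifies $\mathrm{End}(\bar{E}_i)$ with a maximal order in the quaternion algebra $B_\ell$ ramified at $\ell$ and $\infty$. Using Serre--Tate deformation theory I would translate a congruence $j(\tau_1)\equiv j(\tau_2)\pmod{\fp^{\,k}}$ into the existence of an isogeny $\bar{E}_1\to\bar{E}_2$ of a controlled degree lifting to a specified level of $\fp$-adic precision. Such an isogeny produces an element $\alpha\in B_\ell$ inside an $(\mathcal{O}_{K_1},\mathcal{O}_{K_2})$-bimodule; writing $\alpha=\beta_1+\beta_2$ along the two embedded CM subfields, its reduced trace $x$, its reduced norm $n$, and the norm $n'$ of the companion component satisfy precisely the identity $x^2+4nn'=d_1d_2$. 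Summing the weighted counts of such $\alpha$ over all pairs $([\tau_1],[\tau_2])$ and all primes reproduces the product on the right, with the exponent $\epsilon(n')$ emerging from Eichler's optimal-embedding formula as a product of local Legendre symbols.

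The hard step will be the precise matching between the Serre--Tate length and the isogeny count: showing that $v_{\fp}(j(\tau_1)-j(\tau_2))$ equals the number of deformable isogenies of each relevant degree requires a detailed analysis of the local rings of the modular curve at supersingular points and a careful handling of how the $\ell$-part of the isogeny degree interacts with the deformation filtration. Extracting the exponent $\epsilon(n')$ from the quaternionic count further relies on genus-theoretic input for the biquadratic field $K_1K_2$. Once this local mass formula is in place, the reindexing of the resulting double sum into the Diophantine form on the right-hand side is essentially combinatorial, and the overall $\pm$ sign is absorbed at the archimedean place.
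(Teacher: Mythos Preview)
The paper does not prove this theorem. It is quoted in the introduction purely as motivation (with the attribution ``\cite{GZ85}, Theorem 1.3'' and the remark ``We refer a more detailed explanation of the proofs to \cite{C04}''), and the body of the paper develops a Drinfeld-module analogue rather than revisiting the Gross--Zagier formula itself. So there is no ``paper's own proof'' to compare your proposal against.

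That said, your outline is recognizably the algebraic proof of Gross--Zagier: prove integrality, show that any contributing prime forces supersingular reduction via the two incompatible CM actions, use Deuring/Serre--Tate to convert $v_\fp(j(\tau_1)-j(\tau_2))$ into a count of quaternionic elements in an $(\mathcal{O}_{K_1},\mathcal{O}_{K_2})$-bimodule, and then reindex that count by the reduced trace/norm data to obtain the Diophantine condition $x^2+4nn'=d_1d_2$ with the sign $\epsilon(n')$ coming from optimal-embedding/genus-theory computations. The one place where your sketch is vague in a way that matters is the ``precise matching between the Serre--Tate length and the isogeny count'': in Gross--Zagier this is Gross's canonical-lifting computation identifying $\mathrm{End}_{W/\pi^n}$ with $\mathcal{O}+\pi^{n-1}\mathrm{End}_k$ (the very lemma the present paper generalizes in its Lemma~\ref{keylem}), and it is what turns the congruence depth into an exact embedding count rather than just a bound. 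If you were to actually write the proof, that is the step requiring the most care; the rest of your plan is sound.
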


We refer a more detailed explanation of the proofs to \cite{C04}. Subsequently, the Gross-Zagier estimation aligns with intriguing topics, including the Hilbert class polynomial for CM elliptic curves \cite{BJO06}, the number of supersingular reduction primes of an elliptic curve over $\mathbb{Q}$ \cite{E87}, and N\'eron-Tate local height on Heegner points \cite{GZ86}. Additionally, one can also viewed Gross-Zagier singular moduli formula geometrically as valuation of CM cycle on modular stack over $\mathbb{Z}$ of pairs of elliptic curves (see section 3 in \cite{T10}).

Shifting focus to the analogy of Drinfeld modules, we fix some notation first.  Let $r\geqslant 2$ be an integer. Let $A=\F_q[T]$ be the polynomial ring of variable $T$ over the finite field $\F_q$ with $q=p^e$ a prime power, $F=\F_q(T)$ be its fractional field, and fix a prime ideal $\fp=(\pi)$ of $A$ with monic generator $\pi$. Let $K/F$ be an imaginary extension of degree $r$, we denote $r_{\rm sep}=[K:F]_{\rm sep}$ to be its separable degree, and let $\mathcal{O}_K$ be the integral closure of $A$ in $K$.  We define ${\rm CM}(\mathcal{O}_K,\iota)$ to be the set of isomorphism classes of normalizable Drinfeld modules having CM by $\mathcal{O}_K$, where $\iota$ is a fixed embedding from $K$ to $\bar{F}$, the algebraic closure of $F$, see Definition \ref{cmdef}. And let $J^{(\delta_1,\cdots,\delta_{r-1})}$ be a basic $J$-invariant of rank-$r$ Drinfeld modules, see Definition \ref{basicj}.

Here we always assume $K/F$ is a {\bf normal} extension. As a result, we can estimate the valuation of difference of basic $J$-invariants between (i) those rank-$r$ Drinfeld modules $\phi$ with CM by $\mathcal{O}_K$ and (ii) a fixed Drinfeld module $\phi'$ of rank $r$ defined over a finite normal extension over $\F_q(T)$:

\begin{man}[Theorem \ref{main1'}]\label{intro1}

Fix a  Drinfeld module $\phi'$ of rank $r$ defined over a normal extension $K'/F$ that has good reduction at a place $\fq'$ of $K'$ lying above $\fp$. We have
$$
{\rm ord}_\fp\left(\prod_{[\phi]\in{\rm CM}(\mathcal{O}_K,\iota)}J^{(\delta_1,\cdots,\delta_{r-1})}(\phi)-J^{(\delta_1,\cdots,\delta_{r-1})}(\phi')\right)
\geqslant \frac{q-1}{\rsep\cdot(q^r-1)\cdot e_{K\cdot K',\fp}}\sum_{n\geqslant 1} \#\mathcal{M}_n 
.
$$
Here $e_{K\cdot K',\fp}$ is the ramification index of $\fp$ in $K\cdot K'$, and $ \mathcal{M}_n$ is the set of $A$-algebra embeddings $\eta:\mathcal{O}_K\hookrightarrow{\rm End}_{W_n}(\phi')$ with $\eta|_A=\phi'$ over $W_n$, where $W_n$ is the local Artinian ring defined in Convention \ref{wn}.
\end{man}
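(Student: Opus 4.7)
The plan is to adapt the Gross--Zagier--Dorman strategy to the higher-rank Drinfeld setting: bound each factor's $\fp$-adic valuation by counting the depths at which the CM Drinfeld module $\phi$ becomes isomorphic to $\phi'$ over the Artinian rings $W_n$, then aggregate over all CM classes so that the resulting double sum reshapes into $\sum_{n\geqslant 1}\#\M_n$.

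First, after passing to a place of the compositum $K\cdot K'$ above $\fp$, I would rewrite
$$
{\rm ord}_\fp\bigl(J(\phi)-J(\phi')\bigr)=\frac{1}{e_{K\cdot K',\fp}}\cdot v_{K\cdot K'}\bigl(J(\phi)-J(\phi')\bigr),
$$
which accounts for the ramification denominator. Then I would establish the key local criterion: a congruence of basic $J$-invariants at depth $n$, up to a $(q^r-1)$-st root of unity coming from the normalization ambiguity of $(\delta_1,\ldots,\delta_{r-1})$, is equivalent to the existence of an isomorphism $\phi\cong\phi'$ over $W_n$. Because a basic $J$-invariant is only well-defined up to this $(q^r-1)/(q-1)$ ambiguity, this step contributes the factor $(q-1)/(q^r-1)$ to the lower bound for each individual valuation.

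Next, I would translate ``$\phi\cong\phi'$ over $W_n$'' into the combinatorial count $\#\M_n$. Any isomorphism $f_n:\phi\xrightarrow{\sim}\phi'$ over $W_n$ transports the CM embedding $\mathcal{O}_K\hookrightarrow{\rm End}(\phi)$ to an $A$-algebra embedding $\eta:\mathcal{O}_K\hookrightarrow{\rm End}_{W_n}(\phi')$ with $\eta|_A=\phi'$, namely an element of $\M_n$. Conversely, each $\eta\in\M_n$ arises from such a $\phi$, uniquely up to the inseparability ambiguity measured by $[K:F]/\rsep$ coming from Frobenius twists of $\phi$ that yield the same $\eta$. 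Exchanging the order of summation
$$
\sum_{[\phi]\in{\rm CM}(\mathcal{O}_K,\iota)}\#\{n\geqslant 1:\phi\cong\phi'\text{ over }W_n\}=\sum_{n\geqslant 1}\#\{[\phi]:\phi\cong\phi'\text{ over }W_n\}
$$
and identifying the inner count with $\#\M_n/\rsep$ then produces the stated bound.

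The main obstacle lies in the local criterion above. For $r\geqslant 3$ the basic $J$-invariants no longer classify isomorphism classes outright, so one must show that a single $J^{(\delta_1,\ldots,\delta_{r-1})}$ still detects the ideal-adic distance between $\phi$ and $\phi'$ sharply enough, and precisely up to the $(q-1)/(q^r-1)$ normalization factor, to promote a coefficient-wise congruence over $W_n$ to a genuine isomorphism of Drinfeld modules. A secondary technical point is tracking the inseparability factor $\rsep$ when $K/F$ is inseparable, which requires a careful Frobenius-twist analysis of the CM functor.
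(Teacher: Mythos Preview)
Your outline has two genuine gaps. First, the ``key local criterion'' you set out to prove---that congruence of $J^{(\delta_1,\ldots,\delta_{r-1})}$ at depth $n$ is \emph{equivalent} to an isomorphism $\phi\cong\phi'$ over $W_n$---is false for $r\geqslant 3$ and is not what the argument needs. Only the easy direction is required: if $\phi\cong\phi'$ over $W_n$, normalize both to have leading coefficient $1$ and observe that the coefficients agree modulo $\mu^n$, so the $J$-invariants agree modulo $\mu^n$; more precisely one gets $\nu(J(\phi)-J(\phi'))\geqslant k\geqslant\frac{1}{q^r-1}\sum_{n\geqslant 1}\#\mathrm{Iso}_{W_n}(\phi,\phi')$, where $k$ is the maximal depth of isomorphism and the second inequality uses $\#\mathrm{Aut}_{W_n}(\phi')\leqslant q^r-1$. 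The converse implication fails---a single basic $J$-invariant can vanish to high order without $\phi$ being close to $\phi'$---and this is precisely why the theorem is an inequality rather than an equality. So the ``main obstacle'' you identify is a non-problem, and your attribution of the factor $(q-1)/(q^r-1)$ to an ambiguity in $J$ is incorrect: $J$-invariants are genuine invariants; the $1/(q^r-1)$ comes from the bound on $\#\mathrm{Aut}_{W_n}(\phi')$ above, and the $q-1$ enters later because every equivalence class of isomorphisms contains at least the $\F_q^*$-scalars.

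Second, the real work lies in the step you pass over in one line: given $\eta\in\M_n$, produce a class $[\phi]\in\mathrm{CM}(\mathcal{O}_K,\iota)$ together with an isomorphism $\phi\cong\phi'$ over $W_n$ inducing $\eta$. This is a CM-lifting statement and needs Drinfeld's analogue of Serre--Tate: one regards $(\phi'\bmod\mu^n,\eta)$ as a rank-$1$ Drinfeld $\mathcal{O}_K$-module over $W_n$, passes to the associated height-$1$ formal $\mathcal{O}_\fq$-module, and invokes the universal deformation (which is unramified in height $1$) to lift uniquely to $W$; normality of $K/F$ is then used to land in $\mathrm{CM}(\mathcal{O}_K,\iota)$. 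Your accounting of $\rsep$ is also off: it does not arise from Frobenius twists of $\phi$, but from partitioning $\M_n$ according to the embedding $\partial\circ\eta\bmod\mu^n$. There are at most $\rsep$ such embeddings, and the Galois action of $\Aut(\check L^{nr}_\mathfrak{P}/F_\fp)$ (this is where normality of $K'/F$ is used) shows all parts have equal size, so the part with $\partial\circ\eta\equiv\iota$ has size at least $\#\M_n/\rsep$. Finally, the quantity one sums is $\sum_{[\phi]}\#\mathrm{Iso}_{W_n}(\phi,\phi')$, not the number of CM classes isomorphic to $\phi'$; the bijection is between equivalence classes of isomorphisms and the normalized embedding set $S_n$, not between CM classes and $\M_n/\rsep$.
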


When $\phi'$ is the particular  Drinfeld module $\varphi$ introduced in section 2.2, its basic $J$-invariants are always vanishing. We can observe that

$$\prod_{[\phi]\in {\rm CM}(\mathcal{O}_K,\iota)}\left(J^{(\delta_1,\cdots,\delta_{r-1})}(\phi)-J^{(\delta_1,\cdots,\delta_{r-1})}(\varphi)\right)=\prod_{[\phi]\in {\rm CM}(\mathcal{O}_K,\iota)}J^{(\delta_1,\cdots,\delta_{r-1})}(\phi).$$

In this case, we can make a finer estimation:
\begin{man}[Theorem \ref{main1}]\label{intro2}

\begin{align*}
{\rm{ord}}_\fp(J_{\mathcal{O}_K}^{(\delta_1,\cdots,\delta_{r-1})})&:={\rm{ord}}_\fp\left(\prod_{[\phi]\in {\rm CM}(\mathcal{O}_K,\iota)}J^{(\delta_1,\cdots,\delta_{r-1})}(\phi)\right)\nonumber\\
\ \\ 
&\geqslant \frac{(\sum_{i=1}^{r-1}\delta_i)(q-1)}{\rsep\cdot(q^r-1)\cdot e_{K, \fp}}\sum_{n\geqslant 1} \#\mathcal{M}_n. \\
\end{align*}

Here $e_{K,\fp}$ is the ramification index of $\fp$ in $K$, and $\mathcal{M}_n$ is the set of $A$-algebra embeddings $\eta:\mathcal{O}_K\hookrightarrow {\rm End}_{W_n}(\varphi)$ such that $\eta|_A=\varphi$ over $W_n$, where $W_n$ is the local Artinian ring defined Convention \ref{wn}.
\end{man}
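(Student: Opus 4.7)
The plan is to bootstrap from Theorem \ref{main1'} applied with $\phi' = \varphi$ and $K' = F$ (so that $K \cdot K' = K$ and $e_{K \cdot K', \fp} = e_{K, \fp}$), and then to sharpen the resulting estimate by a factor of $\sum_{i=1}^{r-1}\delta_i$ by exploiting that the non-leading coefficients of $\varphi$ all vanish.

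First I would write the basic $J$-invariant as a weighted monomial
$$J^{(\delta_1,\ldots,\delta_{r-1})}(\phi) = \frac{g_1(\phi)^{\delta_1}\cdots g_{r-1}(\phi)^{\delta_{r-1}}}{g_r(\phi)^{\delta_r^*}}$$
in the coefficients of $\phi_T = T + g_1(\phi)\tau + \cdots + g_r(\phi)\tau^r$, where $\delta_r^*$ is fixed by the isomorphism-invariance condition. Every $[\phi] \in {\rm CM}(\mathcal{O}_K,\iota)$ has potential good reduction at $\fp$, so one may pick a $\fp$-normalized representative making $g_r(\phi)$ a unit above $\fp$. It then follows that
$${\rm ord}_\fp\bigl(J^{(\delta_1,\ldots,\delta_{r-1})}(\phi)\bigr) = \sum_{i=1}^{r-1}\delta_i\cdot{\rm ord}_\fp\bigl(g_i(\phi)\bigr),$$
which reduces the problem to a coordinate-wise lower bound on ${\rm ord}_\fp(g_i(\phi))$ for each $i<r$.

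Second I would revisit the local isogeny-lifting mechanism driving Theorem \ref{main1'}. Each $\eta \in \mathcal{M}_n$ produces a $W_n$-isomorphism between the reductions of $\phi$ and $\varphi$ realized by some scalar $c \in W_n^*$ satisfying $c^{-1}\phi_T c \equiv \varphi_T \pmod{\pi^n}$, which unwinds to $c^{q^i-1}g_i(\phi) \equiv g_i(\varphi) \pmod{\pi^n}$ for every $i$. Because $g_i(\varphi) = 0$ for $i < r$, this upgrades to $g_i(\phi) \equiv 0 \pmod{\pi^n}$ for each such $i$, so every $\eta$ now contributes to the $\fp$-adic valuation of \emph{each} individual coefficient, not merely of the single combination $J(\phi)$. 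Carrying through the same bookkeeping as in Theorem \ref{main1'}, the normalization factors $(q-1)/(q^r-1)$, $1/\rsep$ and $1/e_{K,\fp}$ (arising from the scaling action on $c$ and from transferring between $\fq$- and $\fp$-adic valuations) reproduce, for each $i < r$, the estimate
$${\rm ord}_\fp\Bigl(\prod_{[\phi]}g_i(\phi)\Bigr) \geq \frac{q-1}{\rsep(q^r-1)e_{K,\fp}}\sum_{n\geq 1}\#\mathcal{M}_n.$$

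Finally, weighting each of these inequalities by $\delta_i$ and summing over $i = 1, \ldots, r-1$, the monomial identity of the first step delivers the advertised bound with the extra factor $\sum_{i=1}^{r-1}\delta_i$ in front. The main obstacle I anticipate is the coordinate-level refinement in the second step: one must verify that the scaling element $c$ implementing the $W_n$-isomorphism has controlled enough valuation for the individual-coefficient estimate to match the $J$-invariant estimate of Theorem \ref{main1'} term-by-term, rather than losing a factor somewhere when translating from the isomorphism-invariant combination $J(\phi)$ down to its factors $g_i(\phi)$. Once that coordinate bound is secured, the rest of the argument is formal.
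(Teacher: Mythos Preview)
Your proposal is correct and follows essentially the paper's route: the paper assembles the bound from Lemma~\ref{Jest} (where the factor $\sum_i\delta_i$ arises from precisely your observation that $g_i(\varphi)=0$ forces $g_i(\phi)\equiv 0\bmod\mu^k$ whenever $\phi\cong\varphi\bmod\mu^k$), Corollary~\ref{sn} (the $q-1$), Proposition~\ref{algemb} (the $1/\rsep$), and the conversion ${\rm ord}_\fp=\nu/e_{K,\fp}$. Your only variation---bounding each normalized coefficient $g_i(\phi)$ separately and then weighting by $\delta_i$---is the same computation reorganized, since $J^{(\delta_1,\ldots,\delta_{r-1})}(\phi)$ is a monomial in the $g_i$ once the leading coefficient is normalized to $1$.
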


\begin{rem*}
If rank $r=2$, there is only one $j$-invariant $j=J^{(q+1)}$. This $j$-invariant can distinguish rank-$2$ Drinfeld modules up to isomorphism. Our estimation on ${\rm ord}_\fp(J^{(q+1)}_{\mathcal{O}_K})$ can be reduced into the equality introduced by Dorman in \cite{D91}, equation (5.5). We refer to Remark \ref{rk2} for more details.

However, some technical issues appear when the rank of Drinfeld module is greater than $2$.  For instance, the structure of coarse moduli scheme for rank-$r$ Drinfeld module with $r>2$ is quite different from the rank-$2$ case. The spectrum of such a coarse moduli scheme is a ring generated by more than one invariant, see section 2.3 for a brief introduction. In other words, one single ``$j$-invariant'' can no longer distinguish isomorphism classes of Drinfeld modules of rank $r>2$. This observation naturally justifies why our estimation takes the form of an inequality. 
\end{rem*}

\begin{rem*}

By fixing a basic $J$-invariant $J^{(\delta_1,\cdots,\delta_{r-1})}$, one can define the so-called ``Hilbert class polynomial'' for Drinfeld modules as follows:
$$H_{\mathcal{O}_K}^{(\delta_1,\cdots,\delta_{r-1})}(X):=\prod_{[\phi]\in {\rm CM}(\mathcal{O}_K,\iota)}\left(X-J^{(\delta_1,\cdots,\delta_{r-1})}(\phi)\right)\in \mathcal{O}_K[X].$$
The $J_{\mathcal{O}_K}^{(\delta_1,\cdots,\delta_{r-1})}$ in our estimation is exactly the constant term of the Hilbert class polynomial up to a plus-minus sign. And its valuation at a finite prime $\fp$ is related to the study of supersingular reduction prime of a rank-$r$ Drinfeld module, which will be explored in the future.

\end{rem*}

Finally, when $\varphi \mod \fp$ is supersingular, the cardinality of $\mathcal{M}_n$ can be translated into a matrix counting problem. For simplicity, we may write $K=F(s_1,\cdots,s_t)$ and $\mathcal{O}_K=A[s_1,\cdots,s_t]$ with $t$ minimal. Thus we have an isomorphism $A[s_1,\cdots,s_t]\cong A[X_1\cdots,X_t]/I_{\mathcal{O}_K}$, where $I_{\mathcal{O}_K}$ is an ideal of the polynomial ring $A[X_1,\cdots,X_t]$.

In order to compute $\#\M_n$, we have (see Proposition \ref{punr} \& \ref{pram}) the following characterization of ${\rm End}_{W_n}(\varphi)$, where $W_n$ is the local Artinian ring introduced in Convention \ref{wn}:
$${\rm End}_{W_n}(\varphi)=\begin{cases}\F_{q^r}[T]+\pi^{n-1}{\rm End}_{\bar{\F}_\fp}({\varphi}), &\textrm{ if }\fp \textrm{ is unramified in }K/F\\
\ \\
\F_{q^r}[T]+\pi^{\lfloor\frac{n+e_{K,\fp}-1}{e_{K,\fp}}\rfloor-1}{\rm End}_{\bar{\F}_\fp}({\varphi}), &\textrm{ if }\fp \textrm{ ramifies in }K/F
\end{cases}$$
where $e_{K,\fp}$ is the ramification index of $\fp$ in $K$, and $\bar{\F}_{\fp}$ is the algebraic closure of $\F_\fp:=A/\fp$. In the following result, we give a matrix representation of the endomorphism ring ${\rm End}_{\bar{\F}_{\fp}}(\varphi)$, and describe $\#\M_n$ as a counting number in the computational aspect:

\begin{man}[Proposition \ref{endomu} \& Corollary \ref{estj}]\label{intro3}
Under an additional restriction that 
\begin{enumerate}

\item[$\bullet$] $\fp=(\pi)$ is a prime ideal of $\F_q[T]$ whose degree is coprime to $r$,

\end{enumerate}
We have
$$
\mathcal{M}:={\rm End}_{\bar{\F}_\fp}(\varphi)=\left\{   \left(\begin{array}{cccc}x_1 & x_2 & \cdots & x_r \\\pi\sigma(x_r) & \sigma(x_1) & \cdots & \sigma(x_{r-1}) \\\vdots &  & \ddots & \vdots \\ \pi\sigma^{r-1}(x_2) & \cdots & \pi\sigma^{r-1}(x_r) & \sigma^{r-1}(x_1)\end{array}\right)
         \    \Bigg|\    x_i\in \F_{q^r}[T]    \textrm{ for all }i \right\},           
$$
and

\begin{align*}
{\rm{ord}}_\fp(J_{\mathcal{O}_K}^{(\delta_1,\cdots,\delta_{r-1})})&\geqslant \frac{(\sum_{i=1}^{r-1}\delta_i)(q-1)}{r_{\rm sep}\cdot(q^r-1)\cdot e_{K, \fp}}\sum_{m\geqslant 0} \#\mathcal{M}_{m\cdot e_{K,\fp}+1} 
\end{align*}.

Here  $e_{K,\fp}$ is the ramification index of $\fp$ in $K$, and
$
\#\M_{m\cdot e_{K,\fp}+1}$ is equal to 

$\#\Bigg\{(\alpha_1,\cdots, \alpha_t)\in \mathcal{M}^t \bigg|\ $
$
\begin{array}{cc}
 \alpha_i(x_1,\cdots,x_r)\in\mathcal{M} \textrm{ satisfies } x_k\equiv 0 \mod \pi^{m} \textrm{ for } 2\leqslant k\leqslant r, \\ \alpha_i\alpha_j=\alpha_j\alpha_i \textrm{ for }1\leqslant i, j\leqslant t,  \\ f(\alpha_1,\cdots,\alpha_t)=0 \textrm{ for all } f\in I_{\mathcal{O}_K}  \\
 \end{array}
$
$\Bigg\}.$

\end{man}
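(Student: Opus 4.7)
The claim splits into two independent parts: (Part 1) identifying $\mathcal{M} = \mathrm{End}_{\bar{\F}_\fp}(\varphi)$ with the displayed matrix algebra of Proposition \ref{endomu}, and (Part 2) rewriting the bound of Theorem \ref{intro2} as a matrix-counting problem to obtain Corollary \ref{estj}.

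My plan for Part 1 is first to present $\mathcal{M}$ abstractly as a twisted polynomial algebra, then to recognize the displayed matrix ring as its left regular representation. The coprimality hypothesis $(\deg\pi, r) = 1$ forces $\varphi\bmod\fp$ to be supersingular, so $\mathcal{M}$ is a maximal order in the central division algebra $D/F$ of dimension $r^2$ with local invariants $1/r$ at $\fp$ and $-1/r$ at $\infty$. Moreover, $(\deg\pi, r)=1$ is exactly what lets us pick a local uniformizer $\tau$ of $D_\fp$ satisfying $\tau^r = \pi$ on the nose; under this choice, $D$ is the cyclic algebra $(\F_{q^r}(T)/F, \sigma, \pi)$ and $\mathcal{M}$ is its standard maximal order $\F_{q^r}[T]\langle\tau\rangle/(\tau^r - \pi,\ \tau a - \sigma(a)\tau)$, where $\sigma$ is the $q$-Frobenius on $\F_{q^r}$ fixing $T$. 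Writing a general element as $u = x_1 + x_2\tau + \cdots + x_r \tau^{r-1}$ with $x_i \in \F_{q^r}[T]$, the matrix of right multiplication by $u$ with respect to the left $\F_{q^r}[T]$-basis $\{1, \tau, \ldots, \tau^{r-1}\}$ is exactly the one displayed: the $\sigma^j$-shifts come from $\tau^j a = \sigma^j(a)\tau^j$, and the lower-triangular $\pi$ factors come from the wrap-around $\tau^r = \pi$.

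For Part 2, an $A$-algebra embedding $\eta : \mathcal{O}_K \hookrightarrow \mathrm{End}_{W_n}(\varphi)$ is determined by the tuple $(\alpha_1,\ldots,\alpha_t) = (\eta(s_1),\ldots,\eta(s_t))$, subject to membership in $\mathrm{End}_{W_n}(\varphi)$, pairwise commutativity (because $\mathcal{O}_K$ is commutative), and annihilation by every $f \in I_{\mathcal{O}_K}$; injectivity of $\eta$ is automatic since $\mathcal{O}_K$ is a domain and $I_{\mathcal{O}_K}$ is its full defining ideal. Applying the ramified formula $\mathrm{End}_{W_n}(\varphi) = \F_{q^r}[T] + \pi^{\lceil n/e_{K,\fp}\rceil - 1}\mathcal{M}$ with $n = m\cdot e_{K,\fp}+1$ makes the exponent exactly $m$, and via the matrix presentation of Part 1, membership in $\F_{q^r}[T] + \pi^m\mathcal{M}$ translates into the congruence $x_k \equiv 0 \pmod{\pi^m}$ for $2 \leqslant k \leqslant r$ on the first-row entries (the $x_1$-entry being unconstrained since $\F_{q^r}[T]$ sits in the $(1,1)$-slot). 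Reindexing $\sum_{n \geqslant 1} \#\mathcal{M}_n$ from Theorem \ref{intro2} by stratifying the range of $n$ over these endomorphism rings then produces the bound of the Corollary.

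The main obstacle is Part 1, specifically pinning down $\tau^r = \pi$ exactly rather than up to a unit. This requires a careful analysis of how the reduced Frobenius interacts with $\varphi_\pi$ and uses the coprimality hypothesis in an essential way: without $(\deg\pi, r) = 1$, the residue-field extension embedded in $D_\fp$ and the exponent of $\pi$ in the uniformizer relation fail to align, so the displayed matrix algebra is no longer the correct maximal order. Once this local identification is secured, both the explicit form of $\mathcal{M}$ and the counting Corollary reduce to careful bookkeeping.
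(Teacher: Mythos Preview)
Your two-part decomposition and your Part 2 match the paper: Lemma \ref{snmn} translates $\mathcal{M}_n$ into tuples $(\alpha_1,\dots,\alpha_t)$ with commutativity and $I_{\mathcal{O}_K}$-relations, Propositions \ref{punr}--\ref{pram} give $\mathrm{End}_{W/\mu^n W}(\varphi) = \mathcal{O}_H + \pi^{\lceil n/e_{K,\fp}\rceil - 1}\mathcal{M}$, and stratifying $n$ into blocks of length $e_{K,\fp}$ yields the sum over $m \geqslant 0$ exactly as in the proof of Corollary \ref{estj}.

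Part 1 has the right destination but the route you describe has a real gap. In the cyclic algebra $(H/F,\sigma,\pi)$ the relation $\tau^r = \pi$ is \emph{definitional}; choosing a local uniformizer of $D_\fp$ with that property neither identifies $D$ globally with this particular cyclic algebra nor, more to the point, singles out which maximal $A$-order of $D$ equals $\mathrm{End}(\underline{\varphi})$. (Note too that the $q$-Frobenius $\tau \in \bar\F_\fp\{\tau\}$ is generally \emph{not} in $\mathrm{End}(\underline{\varphi})$ once $\deg\pi > 1$, since it fails to commute with $\underline{\varphi}_T = \bar T + \tau^r$; so the cyclic-algebra generator cannot simply be read off from the ambient twisted polynomial ring.) The paper instead argues indirectly: $\mathrm{End}(\underline{\varphi})$ is a maximal order in $D$ containing $\mathcal{O}_H$, hence $\mathrm{End}(\underline{\varphi}) \otimes_A \mathcal{O}_H$ is a maximal order in $D \otimes_F H \cong M_r(H)$; since $\mathcal{O}_H = \F_{q^r}[T]$ has class number $1$, there is only one maximal order of $M_r(H)$ in which $\mathcal{O}_H$ embeds optimally, up to conjugation. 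This is why Proposition \ref{endomu} carries the clause ``up to conjugation in $M_r(H)$'', and your argument must carry it as well. Once that identification is granted, your regular-representation computation of the displayed matrix is correct and agrees with the paper's.
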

\begin{rem*}
This result is particularly useful for computation when $\mathcal{O}_K$ is generated by only one element. More specifically, when $\mathcal{O}_K=A[s]$, the ideal  $I_{\mathcal{O}_K}$ becomes the principal ideal of $A[X]$ generated by the minimal polynomial of $s$. Hence the condition ``$f(\alpha)=0$ for all $f\in I_{\mathcal{O}_K}$'' can be reinterpreted using minimal polynomials, see Remark \ref{estone} for more details.  Based on the remark, we also provide computational examples in section 5.
\end{rem*}

Our paper is organized as follows: we start by the basic structures for Drinfeld modules in section 2. In section 3, we  compare the difference of  ``$j$-invariant'' of Drinfeld modules with number of isomorphisms between two Drinfeld modules, see Theorem \ref{thm1'}. Then we view rank-$r$ Drinfeld $\F_q[T]$-module with CM by $\mathcal{O}_K$ as rank-$1$ Drinfeld $\mathcal{O}_K$-module. Combining Drinfeld module analogue of Serre-Tate lifting theorem and Gross' lifting of height-$1$ formal $\mathcal{O}$-modules, we can construct a CM-lifting of a Drinfeld module over finite field. This transfer ``counting the number of isomorphisms between two Drinfeld modules'' into ``counting the number of endomorphisms in one explicit Drinfeld module with some additional conditions'', see Theorem \ref{thm2'}. Then we relate the counting number problem to count the number of $\F_q[T]$-algebra embeddings from $\mathcal{O}_K$ into an endomorphism ring, as stated in Proposition \ref{algemb}. 

In section 4, we add one more condition that $\fp=(\pi)$ is a prime ideal of $\F_q[T]$ whose degree is coprime to $r$, so that the Drinfeld module $\varphi \mod \fp$ is supersingular. Then we study the matrix representation of the endomorphism algebra associated to a supersingular Drinfeld $\F_q[T]$-module over a finite field. This endomorphism algebra takes the form of a central division algebra over $\F_q(T)$ which has dimension $r^2$. The counting problem on endomorphism on $\varphi \mod \mu^n$ can be established explicitly through this matrix representation. 

In Section 5, we compute several examples, including separable and inseparable cases, when rank $r=3$. In particular, in section 5.1 we show that the equality in Theorem 2 can be reached by selecting a specific basic $J$-invariant. However, we also demonstrate that a change in the choice of basic $J$-invariant results in a strict inequality for the bound.

\section{Preliminaries}

Let $A=\F_q[T]$ be the polynomial ring over finite field with $q=p^e$ an odd prime power, $F=\F_q(T)$ be the fractional field of $A$, and  $K$ be a finite extension over $F$. Set $K\{\tau\}$ to be the twisted polynomial ring with usual addition rule, and the multiplication rule is defined to be $\tau\alpha=\alpha^q\tau \text{ for any } \alpha\in K$.  

\subsection{Drinfeld modules}
We view $K$ as an $A$-field, which is a field equipped with a homomorphism $\gamma: A\rightarrow K$. The {\bf{$A$-characteristic}} of $K$ is defined to be the kernel of $\gamma$.

\begin{defi}
A Drinfeld $A$-module of rank $r$ over $K$ is a ring homomorphism

$$\phi: A\rightarrow K\{\tau\}$$
such that
\begin{enumerate}
\item[(i)] $\phi(a):=\phi_a$ satisfies ${\rm deg}_{\tau}\phi_a=r\cdot {\rm deg}_Ta$
\item[(ii)] Denote $\partial: K\{\tau\}\rightarrow K$ by $\partial(\sum a_i\tau^i)=a_0$, then $\phi$ satisfies $\gamma=\partial\circ\phi$.
\end{enumerate}
\end{defi}

From the definition of Drinfeld $A$-module, we can characterize a Drinfeld module $\phi$ by writing down $$\phi_T=T+g_1\tau+\cdots+g_{r-1}\tau^{r-1}+\Delta\tau^r, \textrm{ where } g_i\in K \textrm{ and }\Delta\in K^*.$$

\begin{defi}
Let $\phi$ and $\psi$ be two rank-$r$ Drinfeld $A$-modules over $K$. A {\bf{morphism}} $u:\phi\rightarrow \psi$ over $K$ is a twisted polynomial $u\in K\{\tau\}$ such that
$$u\phi_a=\psi_a u \text{ \rm for all } a\in A.$$ 
A non-zero morphism $u:\phi\rightarrow \psi$ is called an isogeny. A morphism $u:\phi\rightarrow \psi$ is called an {\bf{isomorphism}} if its inverse exists. Set ${\rm Hom}_K(\phi,\psi)$ to be the group of all morphisms $u:\phi\rightarrow \psi$ over $K$. Also, we denote ${\rm End}_K(\phi)={\rm Hom}_K(\phi,\phi)$. For any field extension $L/K$, we define
$${\rm Hom}_L(\phi,\psi)=\{u\in L\{\tau\} \mid u\phi_a=\psi_a u  \text{ \rm for all } a\in A \}.$$
For $L=\bar{K}$, we omit subscripts and write

$${\rm Hom}(\phi,\psi):={\rm Hom}_{\bar{K}}(\phi,\psi) \text{ \rm and } {\rm End}(\phi):={\rm End}_{\bar{K}}(\phi)$$

\end{defi}

\begin{rem}
The composition of morphisms makes ${\rm End}_L(\phi)$ into a subring of $L\{\tau\}$, called the {\bf{endomorphism ring}} of $\phi$ over $L$. Moreover, define an $A$-action on ${\rm Hom}_L(\phi,\psi)$ by $$a\circ u=u\phi_a=\psi_a u.$$ We can view the group of homomorphisms as an $A$-module. The $A$-action also implies $\phi(A)$ to be in the center of ${\rm End}_L(\phi)$. Thus we may view ${\rm End}_L(\phi)$ as an $A$-algebra via $\phi$.

\end{rem}

Now we can define the algebra induces from the endomorphism ring of $\phi$ as follows:

$${\rm End}^\circ (\phi):=F\otimes_A {\rm End}(\phi).$$

\begin{prop}

The algebra ${\rm End}^\circ (\phi)$ is a division algebra over $F$. If $K$ has  $A$-characteristic equal to zero, then ${\rm End}^\circ (\phi)$ is a field extension over $F$ which has a unique place over the place $\infty$ of $F$.  
\end{prop}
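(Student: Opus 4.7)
The plan is to handle the two assertions separately: the division algebra claim via a dual-isogeny construction in the twisted polynomial ring, and the commutativity/unique place claim in generic characteristic via Drinfeld's analytic uniformization.

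For the division algebra claim, I would first observe that $\bar K\{\tau\}$ is a (non-commutative) integral domain, because the leading $\tau$-coefficient of a product equals the product of the leading coefficients; hence ${\rm End}(\phi)\subset \bar K\{\tau\}$ is an integral domain. Next, for any nonzero $u\in {\rm End}(\phi)$, the kernel $\ker u$ inside the $\phi$-module $\bar K$ is a finite $A$-submodule, so one can pick $0\neq a\in A$ annihilating it, giving $\ker u\subseteq \ker \phi_a$. A standard right-division argument in $\bar K\{\tau\}$ then yields $v\in \bar K\{\tau\}$ with $vu=\phi_a$, and checking compatibility with $\phi_b$ for all $b\in A$ shows $v\in {\rm End}(\phi)$. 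Consequently $u$ becomes invertible in ${\rm End}^\circ(\phi)=F\otimes_A{\rm End}(\phi)$, with inverse $v\otimes a^{-1}$.

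For the generic characteristic statement, I would invoke Drinfeld's analytic uniformization: since $\gamma\colon A\hookrightarrow K$ is injective, fix an embedding of $K$ into $\mathbb{C}_\infty$, the completion of an algebraic closure of $F_\infty=\F_q(\!(1/T)\!)$. Over $\mathbb{C}_\infty$, $\phi$ corresponds to a projective $A$-lattice $\Lambda\subset\mathbb{C}_\infty$ of rank $r$, and under this equivalence ${\rm End}(\phi)$ is identified, via the tangent map $\partial$, with $\{c\in \mathbb{C}_\infty\mid c\Lambda\subseteq\Lambda\}$. This is a commutative $A$-subalgebra of $\mathbb{C}_\infty$, so ${\rm End}^\circ(\phi)$ is commutative, and being a division algebra already, it is a field extension $E/F$ embedded into $\mathbb{C}_\infty$. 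For the unique place claim, set $V:=\Lambda\otimes_A F$, an $F$-vector space of dimension $r$ on which $E$ acts by scalar multiplication through this single embedding $E\subset\mathbb{C}_\infty$. Tensoring with $F_\infty$ gives a faithful action of $E\otimes_F F_\infty=\prod_{\mathfrak{v}\mid\infty}E_{\mathfrak v}$ on $V\otimes_F F_\infty$; but the $E$-action extends by continuity to an action through a single completion $E_{\mathfrak{v}_0}\subset\mathbb{C}_\infty$, so the other factors annihilate $V\otimes_F F_\infty$ and by faithfulness must be zero. Hence only one place $\mathfrak{v}_0$ of $E$ lies above $\infty$.

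The main obstacle I expect is not the dual-isogeny step, which is standard once one is comfortable with kernels of $\tau$-polynomials, but rather cleanly isolating the unique place above $\infty$. One has to resist the temptation to conclude from the mere existence of an embedding $E\hookrightarrow\mathbb{C}_\infty$ and instead exploit the rigidity of the scalar action on the lattice $\Lambda$ to rule out extra completions of $E$ above $\infty$. If for some reason the analytic uniformization is unavailable in the setup, a purely algebraic alternative is to bound $\mathrm{rank}_A{\rm End}(\phi)\le r$ using the action on the Tate module at a prime of good reduction and to deduce commutativity from the structure of division algebras over $F$ together with the faithful action on the one-dimensional tangent space; but this route is noticeably more delicate than the analytic one and is the route I would attempt only if the analytic argument were closed off.
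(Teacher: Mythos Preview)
The paper does not actually prove this proposition; it simply cites Proposition~4.7.17 in Goss's book. Your sketch is essentially the standard argument found there, and it is correct.

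One point worth tightening in your unique-place argument: the sentence ``the $E$-action extends by continuity to an action through a single completion $E_{\mathfrak v_0}\subset\mathbb C_\infty$'' hides the key step. What makes this work is the identification $V\otimes_F F_\infty\;\cong\;F_\infty\cdot\Lambda\subset\mathbb C_\infty$, which holds precisely because $\Lambda$ is \emph{discrete} in $\mathbb C_\infty$ (an $A$-basis of $\Lambda$ is $F_\infty$-linearly independent). Only through this identification does the abstract $(E\otimes_F F_\infty)$-action become multiplication inside $\mathbb C_\infty$, hence factor through the single completion $E_{\mathfrak v_0}$ determined by the embedding $E\hookrightarrow\mathbb C_\infty$; faithfulness of the abstract action then forces the other factors of $\prod_{\mathfrak v\mid\infty}E_{\mathfrak v}$ to vanish. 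You clearly have this in mind (your caveat about ``rigidity of the scalar action on the lattice'' is exactly the point), but as written the two descriptions of the action sit side by side without the bridge being named.
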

\begin{proof}
See Proposition 4.7.17 in \cite{G96}.
\end{proof}

\begin{defi}
We call a finite extension $L/F$ {\bf{imaginary}} if there is a unique place $\tilde{\infty}$ of $L$ over $\infty$.

\end{defi}
\begin{defi}
Let $K$ be a degree $r$ imaginary extension over $F$ with integral closure denoted by $\mathcal{O}_K$. Let $\mathcal{O}\subseteq \mathcal{O}_K$ be an $A$-order, we say that a rank-$r$ Drinfeld module $\phi$ over a field with $A$-characteristic equal to zero has {\bf{complex multiplication}} (CM) by $\mathcal{O}$ if

$${\rm End}(\phi)\cong \mathcal{O} \textrm{ as $A$-algebras}.$$

\end{defi}

We define the homomorphism space and endomorphism ring for reduction of Drinfeld modules. Let $\fq$ be a place of $K$. Consider the local field $K_\fq$ associated with normalized valuation $\nu$, discrete valuation ring $R$, and  normalized parameter $\pi$. Let $\phi:A\rightarrow K_\fq\{\tau\}$ be a Drinfeld module of rank $r$ over $K_\fq$. Up to isomorphism, we may assume $\phi$ is defined over $R$. Then $$\phi \mod \pi^n: A\rightarrow R/\pi^n R\{\tau\}$$ is a Drinfeld module over $R/\pi^nR$.

\begin{defi}
Let $\phi$ and $\psi$ be two rank-$r$ Drinfeld $A$-modules over $R/\pi^nR$.

\begin{enumerate}
 
\item[(i)] $${\rm Hom}_{R/\pi^nR}(\phi,\psi)=\{u\in R/\pi^nR\{\tau\} \mid u\phi_a=\psi_a u  \text{ \rm for all } a\in A \}.$$

\item[(ii)] $${\rm End}_{R/\pi^nR}(\phi)=\{u\in R/\pi^nR\{\tau\} \mid u\phi_a=\phi_a u  \text{ \rm for all } a\in A \}.$$
\end{enumerate}
\end{defi}

\begin{ex}

In this example, we review some properties on the rank-$r$ Drinfeld module $\varphi_T=T+\tau^r$ over $F$. This will be useful during our estimation of singular moduli.

\end{ex}
\begin{prop}\label{ssred}
\begin{enumerate}

\item[(i) ] The Drinfeld module $\varphi$ has CM by $\F_{q^r}[T]$, i.e. the  endomorphism ring ${\rm End}(\varphi)\cong \F_{q^r}[T]$.

\item[(ii) ]Consider the reduction $\underline{\varphi}$ of $\varphi$ modulo a prime $\fp$ of $A$.  The Drinfeld module $\underline{\varphi}$ is supersingular if and only if $\deg_T(\fp)$ is coprime to $r$. Here $\deg_T(\fp)$ is the $T$-degree of the monic generator of $\fp$.

\begin{proof}
\begin{enumerate}
\item[(i)] See Example 3.3.2 in \cite{P23}.

\item[(ii)]  We refer the necessary condition to Example 4.4.5 in \cite{P23}.

For the sufficient condition, suppose that $\deg_T(\fp)=d$ is coprime to $r$. From Lemma 3.2.11 in \cite{P23}, we have $${\rm{ht}}(\varphi_\fp)=H(\varphi)\cdot d, \textrm{ where $H(\varphi)$ is the height of $\varphi$.}$$
Because $\varphi_T=T+\tau^r$, the number ${\rm{ht}}(\varphi_\fp)$ is a multiple of $r$. Since $d$ is coprime to $r$, we must have $r\mid H(\varphi)$. This forces the height $H(\varphi)$ to be equal to $r$, which implies $\varphi$ has supersingular reduction at $\fp$.
\end{enumerate}
\end{proof}

\end{enumerate}
\end{prop}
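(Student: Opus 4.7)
The plan is to separate the two parts. For (i), I would combine a direct commutator calculation in $\bar F\{\tau\}$ with the division-algebra bound from the proposition proved just above. For (ii), the necessary direction I would import from Papikian, and the sufficient direction I would settle by a short divisibility argument exploiting the very special shape $\varphi_T=T+\tau^r$.

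For part (i), I first display the inclusion $\F_{q^r}[T]\hookrightarrow{\rm End}(\varphi)$. The image $\varphi_T=T+\tau^r$ is automatically an endomorphism, and for any $a\in\F_{q^r}\subset\bar F$ the twisted commutation rule $\tau^r a=a^{q^r}\tau^r=a\tau^r$ gives $a\varphi_T=aT+a\tau^r=Ta+\tau^r a=\varphi_T a$, so $\F_{q^r}\subseteq{\rm End}(\varphi)$. Together with $\varphi(A)$, these generate a copy of $\F_{q^r}[T]$. For the reverse inclusion, I invoke the proposition above: since $F$ has $A$-characteristic zero, ${\rm End}^\circ(\varphi)=F\otimes_A{\rm End}(\varphi)$ is a field with a unique place over $\infty$, so its $F$-dimension is at most $r$; but $\F_{q^r}(T)$ already has $F$-dimension $r$, forcing ${\rm End}^\circ(\varphi)=\F_{q^r}(T)$. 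Since ${\rm End}(\varphi)$ is an $A$-order in $\F_{q^r}(T)$ and $\F_{q^r}[T]$ is itself the integral closure of $A$ in $\F_{q^r}(T)$ (hence the maximal $A$-order), we conclude ${\rm End}(\varphi)=\F_{q^r}[T]$.

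For part (ii), the necessary direction I would defer to Example 4.4.5 of \cite{P23}: if $\underline{\varphi}$ is supersingular then ${\rm End}(\underline{\varphi})$ is a maximal order in a central division algebra over $F$ of dimension $r^2$ ramified only at $\fp$ and $\infty$, and compatibility of local invariants at $\fp$ forces $\gcd(\deg_T\fp,r)=1$. For the sufficient direction, set $d=\deg_T\fp$ and assume $\gcd(d,r)=1$; supersingularity amounts to the height $H(\varphi)$ being equal to $r$. By Lemma 3.2.11 of \cite{P23} one has ${\rm ht}(\varphi_\fp)=H(\varphi)\cdot d$, so it suffices to show $r\mid{\rm ht}(\varphi_\fp)$. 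This is immediate from the shape of $\varphi_T$: since $\varphi_T\in\F_q[T]\{\tau^r\}$ and this subring is closed under sums, products, and left multiplication by $A$, every iterate $\varphi_\pi=\pi(\varphi_T)$ still lies in $\F_q[T]\{\tau^r\}$, and reduction modulo $\fp$ preserves this. Hence every $\tau$-exponent appearing in $\underline{\varphi}_\pi$ is a multiple of $r$, so $r\mid H(\varphi)\cdot d$; coprimality of $d$ and $r$ then forces $r\mid H(\varphi)$, and combined with $H(\varphi)\leqslant r$ we conclude $H(\varphi)=r$.

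The one place I expect to require care is the closing step of (i), namely the identification of ${\rm End}(\varphi)$ with $\F_{q^r}[T]$ exactly rather than some proper overorder; this relies on the fact that any $A$-order in a finite extension of $F$ consists of elements integral over $A$, combined with $\F_{q^r}[T]$ being the integral closure. Once that is pinned down, both parts become short: (i) reduces to a ``write down and count dimensions'' argument, and (ii) reduces to a one-line divisibility check using the cited height formula.
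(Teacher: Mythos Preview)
Your proposal is correct and, for part (ii), takes essentially the same route as the paper: cite Example~4.4.5 of \cite{P23} for the necessary direction, and for sufficiency combine Lemma~3.2.11 of \cite{P23} with the observation that $\underline{\varphi}_\pi$ only involves $\tau$-powers divisible by $r$. Your justification of this last point (closure of $\F_q[T]\{\tau^r\}$ under the relevant operations) is slightly more explicit than the paper's one-line appeal to the shape of $\varphi_T$, but the argument is the same.

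For part (i) you actually do more than the paper, which simply cites Example~3.3.2 of \cite{P23}. Your direct argument is fine: the inclusion $\F_{q^r}\subset{\rm End}(\varphi)$ via $\tau^r a=a\tau^r$ is correct, and the squeeze via the maximal order $\F_{q^r}[T]$ in $\F_{q^r}(T)$ closes it out. One small caveat: the proposition you invoke only asserts that ${\rm End}^\circ(\varphi)$ is an imaginary field over $F$, not that its degree is at most $r$; the bound $[{\rm End}^\circ(\varphi):F]\leqslant r$ is of course standard (e.g.\ via the faithful Tate-module action), but if you want to be self-contained you should either cite it separately or note that the containment $\F_{q^r}[T]\subseteq{\rm End}(\varphi)\subseteq\F_{q^r}[T]$ (the second inclusion coming from integrality inside whatever imaginary field ${\rm End}^\circ(\varphi)$ is, once you know that field contains and hence equals $\F_{q^r}(T)$) already forces equality. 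This is exactly the point you flag in your final paragraph, so you are aware of it.
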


\subsection{\texorpdfstring{$J$-}invariants for higher rank Drinfeld modules}\label{j}
We take the definition of $J$-invariants of higher rank Drinfeld modules introduced by Potemine \cite{P98}. Given a rank-$r$ Drinfeld module $\phi$ over an arbitrary $A$-field $K$, whose $A$-characteristic is equal to zero, with
$$\phi_T=T+g_1\tau+\cdots+g_{r-1}\tau^{r-1}+\Delta\tau^r, \textrm{ where }g_i\in K \textrm{ and }\Delta\in K^*.$$
We define a basic $J$-invariant of $\phi$ to be
$$J^{(\delta_1,\cdots,\delta_{r-1})}(\phi)=\frac{g_1^{\delta_1}\cdots g_{r-1}^{\delta_{r-1}}}{\Delta^{\delta_r}},$$
where $\delta_i$'s satisfy the following two conditions:
\begin{equation}
 \delta_1(q-1)+\delta_2(q^2-1)+\cdots+\delta_{r-1}(q^{r-1}-1)=\delta_r(q^r-1). \tag{1}
\end{equation}

\begin{equation}
0\leqslant \delta_i \leqslant \frac{q^r-1}{q^{{\rm{g.c.d.}}(i,r)}-1} \textrm{ for all }1\leqslant i\leqslant r-1;\ \ \text{ g.c.d.$(\delta_1,\cdots,\delta_r)=1.$} \tag{2}
\end{equation}

\begin{defi}\label{basicj}
We denote by $\left\{  J^{(\delta_1,\cdots, \delta_{r-1})}    \right\}$ the set of all basic $J$-invariants.

\end{defi}

\begin{thm}
$$M^r(1)={\rm Spec} A\left[\left\{  J^{(\delta_1,\cdots, \delta_{r-1})}    \right\}\right]$$ is the coarse moduli scheme of Drinfeld $A$-modules of rank $r$. Here $A\left[\left\{  J^{(\delta_1,\cdots, \delta_{r-1})}    \right\}\right]$ is the ring generated by  $\left\{  J^{(\delta_1,\cdots, \delta_{r-1})}    \right\}$ over $A$.
\end{thm}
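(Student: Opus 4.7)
The plan is to realize $M^r(1)$ as a GIT quotient and then verify the coarse moduli property. Any rank-$r$ Drinfeld module over an $A$-field can be written as $\phi_T = T + g_1\tau + \cdots + g_{r-1}\tau^{r-1} + \Delta \tau^r$ with $\Delta \ne 0$, so the "naive" parameter space is the open subscheme $U = \mathrm{Spec}\, A[g_1,\ldots,g_{r-1},\Delta,\Delta^{-1}]$ of $\mathbb{A}^r_A$. An isomorphism of rank-$r$ Drinfeld $A$-modules is a unit $u \in K^*$ (degree-zero twisted polynomial), and conjugation $u(\cdot)u^{-1}$ transforms the coefficients by $g_i \mapsto u^{1-q^i} g_i$ and $\Delta \mapsto u^{1-q^r}\Delta$. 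Hence isomorphism classes are precisely orbits under the $\mathbb{G}_m$-action on $U$ with weights $(1-q, 1-q^2, \ldots, 1-q^{r-1}, 1-q^r)$, and the natural candidate for the coarse moduli space is $\mathrm{Spec}\, A[U]^{\mathbb{G}_m}$.

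Next I would compute the invariant ring. Since the action is diagonal with integer weights, the invariants are generated by monomials $g_1^{\delta_1}\cdots g_{r-1}^{\delta_{r-1}}\Delta^{-\delta_r}$ (allowing negative powers of $\Delta$) whose total weight vanishes. Setting the weight to zero and clearing signs yields exactly
\[
\delta_1(q-1) + \delta_2(q^2-1) + \cdots + \delta_{r-1}(q^{r-1}-1) = \delta_r(q^r-1),
\]
which is condition (1) in the definition of the basic $J$-invariants. Restricting to $\delta_r \geqslant 0$ is legitimate because $\Delta$ only appears with non-positive power in any polynomial in the $J^{(\delta_1,\ldots,\delta_{r-1})}$, and the ring $A[U]^{\mathbb{G}_m}$ consists of $A$-linear combinations of such weight-zero monomials.

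The key step, and the main obstacle, is to show that the finite subset cut out by condition (2) already generates this semigroup of weight-zero monomials as an $A$-algebra. This is a combinatorial argument about non-negative integer solutions to the Diophantine equation (1): the upper bound $\delta_i \leqslant (q^r-1)/(q^{\gcd(i,r)}-1)$ arises because any larger exponent on $g_i$ can be rewritten, using the relation, as a product of smaller admissible monomials; and the primitivity condition $\gcd(\delta_1,\ldots,\delta_r)=1$ eliminates redundant powers. I would verify this by an induction on $\sum \delta_i$ (or on $\delta_r$), at each step extracting a basic $J$-invariant as a factor, mirroring Potemine's argument in \cite{P98}. This is where all the non-formal content lives.

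Finally I would verify the universal property of a coarse moduli scheme. Over any algebraically closed $A$-field $\bar{k}$, the $\bar{k}$-points of $\mathrm{Spec}\, A[\{J^{(\delta_1,\ldots,\delta_{r-1})}\}]$ are in bijection with $\mathbb{G}_m(\bar{k})$-orbits on $U(\bar{k})$, hence with isomorphism classes of rank-$r$ Drinfeld modules over $\bar{k}$, because the $\mathbb{G}_m$-orbits are closed (the weights all have the same sign) so the categorical quotient is an orbit space on geometric points. For the universal property, any natural transformation from the moduli functor $M^r$ to a scheme $Z$ must be $\mathbb{G}_m$-invariant when pulled back to $U$, hence factors through $\mathrm{Spec}\, A[U]^{\mathbb{G}_m}$; this is the defining property of the GIT quotient for a reductive group action, which applies here since $\mathbb{G}_m$ is linearly reductive.
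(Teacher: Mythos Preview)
The paper does not prove this statement at all: its entire proof is the single line ``See \cite{P98}, Theorem 3.1.'' Your proposal is therefore not comparable to the paper's proof in any meaningful sense --- you have supplied a genuine sketch where the paper only gives a citation. What you have outlined is essentially Potemine's own argument (GIT quotient of the coefficient space by the diagonal $\mathbb{G}_m$-action, identification of the invariant ring with the monoid algebra on weight-zero monomials, and the combinatorial reduction to the finite generating set satisfying conditions (1) and (2)), and as a sketch it is correct. You rightly flag the combinatorial generation step as the only place with real content and defer to \cite{P98} for it; that is exactly what the paper itself does wholesale.
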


\begin{proof}
See \cite{P98}, Theorem 3.1.

\end{proof}

\begin{rem}
Note that the set of generators $\left\{  J^{(\delta_1,\cdots, \delta_{r-1})}    \right\}$ of the  ring $A\left[\left\{  J^{(\delta_1,\cdots, \delta_{r-1})}    \right\}\right]$ is not algebraically independent. For instance consider $r=3$, and the basic $J$-invariants $J^{(q^2+q+1,0)}$, $J^{(0,q^2+q+1)}$, and $J^{(1,q)}$. We have
$$J^{(q^2+q+1,0)}\cdot (J^{(0,q^2+q+1)})^q=(J^{(1,q)})^{q^2+q+1}.$$ 
Note that the defining equation for $M^3(1)$ is explicitly written in section 7.1 of \cite{P98}.

\end{rem}

\section{Estimation on singular moduli}
From now on, we fix to the following terminologies:

\begin{enumerate}

\item[$\bullet$] Let $r\geqslant 2$ be an integer. And let $q=p^e$ be a prime power.

\item[$\bullet$] Let $F_\infty$ be the completion of $F$ at $\infty$, and $\mathbb{C}_\infty$ be the completion of algebraic closure of $F_\infty$

\item[$\bullet$] $K$ is a degree-$r$ imaginary and normal extension over $F$.

\item[$\bullet$]  Fix a generator $u\in \F_{q^r}^*$, and define $H=F(u)=\F_{q^r}(T)$. Its integral closure is denoted by $\mathcal{O}_H=\F_{q^r}[T]$.

\item[$\bullet$] Fix a prime ideal $\fp$ of $A$. Denote by $\nu_\fp$ a normalized valuation of $F_\fp$, the completion of $F$ at $\fp$. The valuation $\nu_\fp$ can be extended to any finite extension of $F_\fp$.

\end{enumerate}

\subsection{Complex multiplication and normalizable Drinfeld modules}

We first introduce the notion of normalizable Drinfeld modules.

\begin{defi}\label{cmdef}
\begin{enumerate}
\item[(i) ]

Let $\phi$ be a rank $r$ Drinfeld module over $\mathbb{C}_\infty$ with $CM$ by $\mathcal{O}_K$. We fix an isomorphism ${\rm End}(\phi)\cong \mathcal{O}_K$. The isomorphism composed with the derivative map $\partial$ induces a fixed embedding $\iota:\mathcal{O}_K\hookrightarrow \mathbb{C}_\infty$. From $\iota$ we extend uniquely to an embedding $\iota: K\hookrightarrow \mathbb{C}_\infty$. 
We say $\phi$ is normalizable if there is an isomorphism ${\rm End}(\phi)\cong \mathcal{O}_K$ such that $$\partial(\phi_\alpha)=\iota(\alpha) \textrm{ for all } \alpha\in\mathcal{O}_K.$$

\item[(ii) ]
Let ${\rm CM}(\mathcal{O}_K,\iota)$ be the set of isomorphism classes of normalizable Drinfeld modules over $\mathbb{C}_\infty$ having CM by $\mathcal{O}_K$.

\item[(iii)]
Let $H_K$ be the Hilbert class field over $K$, which is the maximal unramified extension over $\iota(K)\hookrightarrow \mathbb{C}_\infty$ where the place at infinity $\tilde{\infty}$ splits completely.

\end{enumerate}
\end{defi}

\begin{rem}
The cardinality of ${\rm CM}(\mathcal{O}_K,\iota)$ is equal to the class number $h_K$ of $K$. 

\end{rem}
Similar to the theory of elliptic curves, we have the following result on the defining field of Drinfeld modules with complex multiplication:

\begin{prop}
A normalizable Drinfeld module with CM by $\mathcal{O}_K$ can be defined over the Hilbert class field $H_K$.

\end{prop}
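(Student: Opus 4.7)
The plan is to use the analytic uniformization of Drinfeld modules over $\mathbb{C}_\infty$ together with the function-field analogue of the main theorem of complex multiplication (due to Hayes), which is the exact counterpart of the classical argument that the $j$-invariant of a CM elliptic curve generates the Hilbert class field.

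First I would set up the analytic side. A normalizable rank-$r$ Drinfeld module $\phi$ over $\mathbb{C}_\infty$ with CM by $\mathcal{O}_K$ arises from the exponential $e_\Lambda$ of a rank-$1$ projective $\mathcal{O}_K$-lattice $\Lambda \subset \mathbb{C}_\infty$ (which has rank $r$ as an $A$-lattice, since $[\mathcal{O}_K:A]=r$), with the embedding $\iota:\mathcal{O}_K \hookrightarrow \mathbb{C}_\infty$ specifying how $\mathcal{O}_K$ acts via multiplication on the $F_\infty$-vector space containing $\Lambda$. Two such $\phi_\Lambda$, $\phi_{\Lambda'}$ are $\mathbb{C}_\infty$-isomorphic if and only if $\Lambda' = c\Lambda$ for some $c\in K^*$, so ${\rm CM}(\mathcal{O}_K,\iota)$ is canonically a torsor under $\mathrm{Pic}(\mathcal{O}_K)$; this also re-derives the equality with the class number $h_K$ mentioned in the preceding remark.

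Next I would invoke the main theorem of CM for Drinfeld modules (Hayes): the absolute Galois group $\mathrm{Gal}(\bar{K}/K)$ acts on this set of isomorphism classes through the reciprocity map of class field theory, and the action factors through $\mathrm{Gal}(H_K/K) \cong \mathrm{Pic}(\mathcal{O}_K)$, where elements act by multiplying lattices by the corresponding fractional ideals. Consequently, any $\sigma \in \mathrm{Gal}(\bar{K}/H_K)$ fixes the isomorphism class of $\phi$, i.e.\ $\phi^\sigma \cong \phi$ over $\mathbb{C}_\infty$. Combined with the algebraicity of the coefficients of $\phi_T$ (which follows from the fact that all basic $J$-invariants take algebraic values at CM points, since the CM locus in the coarse moduli scheme $M^r(1)$ is zero-dimensional), this places $\phi$ inside the category of Drinfeld modules over $\bar{F}\cap H_K^{\mathrm{sep}}$.

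Finally I would run a standard twist/descent argument: since $\mathrm{Aut}(\phi) \cong \mathcal{O}_K^* = \F_{q^r}^*\cap\mathcal{O}_K^*$ is a finite group of order coprime to complications coming from wild ramification, the obstruction to descending $\phi$ from $\bar{K}$ to $H_K$ lives in $H^1(\mathrm{Gal}(\bar{K}/H_K), \mathrm{Aut}(\phi))$ and can be trivialized after an appropriate quadratic-type twist, producing an honest model of $\phi$ over $H_K$. The main obstacle, and the only nontrivial input, is the Hayes reciprocity law; once it is in hand, the descent is formal, and algebraicity of CM values is inherited from the structure of the coarse moduli scheme recalled in Section~\ref{j}.
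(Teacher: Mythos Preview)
The paper does not actually prove this statement: it simply cites Theorem~7.5.17 and Definition~7.5.18 of Papikian's textbook \cite{P23}. Your proposal is therefore far more detailed than what the paper offers, and its overall architecture---analytic uniformization identifies ${\rm CM}(\mathcal{O}_K,\iota)$ with a $\mathrm{Pic}(\mathcal{O}_K)$-torsor, and Hayes' reciprocity law makes $\mathrm{Gal}(\bar K/H_K)$ act trivially on isomorphism classes---is exactly the content behind the cited reference.

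One point to tighten: the descent step at the end is looser than the rest. Your description of $\mathrm{Aut}(\phi)$ as ``$\F_{q^r}^*\cap\mathcal{O}_K^*$'' is not quite right (it is the group of units of the constant field of $K$, which need not be $\F_{q^r}$), and the phrase ``quadratic-type twist'' is misleading for general $r$. More to the point, the usual route in Hayes' theory avoids Galois descent altogether: one \emph{constructs} sgn-normalized rank-$1$ Drinfeld $\mathcal{O}_K$-modules directly and shows their coefficients lie in $H_K$; the normalizability hypothesis in the statement is precisely what allows this direct construction. So while your cohomological descent can be made to work, the cleaner argument (and the one implicit in the citation) replaces your last paragraph with the explicit Hayes model, making the $H^1$ computation unnecessary.
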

\begin{proof}
See Theorem 7.5.17 and Definition 7.5.18 in \cite{P23}
\end{proof}

\begin{rem}\label{repnw}

For each isomorphism class $[\phi]\in {\rm CM}(\mathcal{O}_K,\iota)$, we may choose a  representative $\phi$ is defined over $H_K$. Furthermore, We fix a place $\mathfrak{P}$ of $H_K$ above $\mathfrak{p}$ of $F$. And consider the local field $H_{K,\mathfrak{P}}$ of $H_K$ at $\mathfrak{P}$. Let $H_{K,\mathfrak{P}}^{nr}$ be the maximal unramified extension of $H_{K,\mathfrak{P}}$. We set ${\check{H}_{K,\mathfrak{P}}^{nr}}$ to be the completion of $H_{K,\mathfrak{P}}^{nr}$, and its discrete valuation ring is denoted by $W$. Let $\mu$ be a fixed uniformizer of $W$ with normalized valuation $\nu$. Therefore, up to isomorphisms, we may assume the representative $\phi$ of $[\phi]\in {\rm CM}(\mathcal{O}_K,\iota)$ is defined over $W$. 

\end{rem}

\begin{conv}\label{wn}
The notation $W_n$ in Theorem \ref{intro2} (resp. Theorem \ref{intro1}) is defined to be $W/\mu^n W$ (resp. $W/\mu^nW$ in Terminology \ref{phi'}). 
\end{conv}

\subsection{Connection to counting isomorphisms}

Now we are able to define the singular moduli for higher rank Drinfeld modules. Fix a $J$-invariant $J^{(\delta_1,\cdots,\delta_{r-1})}$. Consider the product

$$J_{\mathcal{O}_K}^{(\delta_1,\cdots,\delta_{r-1})}:=\prod_{[\phi]\in {\rm CM}(\mathcal{O}_K,\iota)}J^{(\delta_1,\cdots,\delta_{r-1})}(\phi).$$

Our first goal is to study the number $\nu(J_{\mathcal{O}_K}^{(\delta_1,\cdots,\delta_{r-1})})$. 

\begin{thm}\label{thm1}

$$
\nu(J_{\mathcal{O}_K}^{(\delta_1,\cdots,\delta_{r-1})})\geqslant \frac{\sum_{i=1}^{r-1}\delta_i}{q^r-1}\sum_{[\phi]\in{\rm CM}(\mathcal{O}_K,\iota)}\sum_{n\geqslant 1} \#{\rm Iso}_{W/\mu^nW}(\phi, \varphi) $$

Here ${\rm Iso}_{W/\mu^nW}(\phi, \varphi)$ denotes the set of isomorphisms from $\phi$ to $\varphi$ after reduction modulo $\mu^n$.

\end{thm}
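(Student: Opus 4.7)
The plan is to establish the inequality one isomorphism class at a time: for each $[\phi] \in \mathrm{CM}(\mathcal{O}_K, \iota)$ I will show
\[
\nu\!\left(J^{(\delta_1,\ldots,\delta_{r-1})}(\phi)\right) \geq \frac{\sum_{i=1}^{r-1}\delta_i}{q^r-1}\sum_{n\geq 1}\#\mathrm{Iso}_{W/\mu^nW}(\phi,\varphi),
\]
then sum over $[\phi]$. Pick a representative of $[\phi]$ defined over $W$ whose leading coefficient is a unit (possible since $\phi$ has CM by the maximal order and hence good reduction at $\mathfrak{P}$), so that $\phi_T = T + g_1\tau + \cdots + g_{r-1}\tau^{r-1} + \Delta\tau^r$ with $\Delta \in W^\ast$ and $\nu(J(\phi)) = \sum_{i=1}^{r-1}\delta_i\,\nu(g_i)$. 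If $\bar\phi\not\cong\bar\varphi$ over $\bar{\F}_\fp$, then any isomorphism over $W_n$ would reduce to an isomorphism of special fibers, so the right side vanishes and the bound is trivial. Otherwise, after twisting within the isomorphism class (which changes neither $\nu(J(\phi))$ nor the $\mathrm{Iso}$-counts) I may assume $\bar\phi=\bar\varphi$, i.e.\ $\nu(g_i)\geq 1$ for every $i$.

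The key technical step is that every $u\in\mathrm{Iso}_{W/\mu^nW}(\phi,\varphi)$ is a constant $c\in (W/\mu^nW)^\ast$. Writing $u=u_0+\sum_{j\geq 1}u_j\tau^j$, I note first that since $u$ is a unit, its reduction $\bar u$ is a unit of $\bar{\F}_\fp\{\tau\}$; the standard leading $\tau$-degree argument in a skew polynomial ring over a field forces $\bar u$ to be a constant, whence $u_0\in W_n^\ast$ and $u_j\in\mu W_n$ for $j\geq 1$. Suppose toward contradiction that some maximal $M\geq 1$ has $u_M\neq 0$. Reading off the coefficient of $\tau^{M+r}$ from $u\phi_T=\varphi_T u$ yields
\[
u_M\bigl(\Delta^{q^M}-u_M^{q^r-1}\bigr)=0.
\]
Since $u_M\in\mu W_n$ gives $u_M^{q^r-1}\in\mu W_n$, whereas $\Delta^{q^M}\in W_n^\ast$, the inner factor is a unit and forces $u_M=0$, a contradiction.

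Having reduced to constant isomorphisms, $c\phi_T=\varphi_T c$ becomes the system $g_i\equiv 0\bmod\mu^n$ for $1\leq i\leq r-1$ together with $c^{q^r-1}\equiv\Delta\bmod\mu^n$. When the first set of conditions holds, the second admits exactly $q^r-1$ solutions in $W_n^\ast$, because $W$ contains $\mu_{q^r-1}=\F_{q^r}^\ast$ and $\Delta\in W_n^\ast$ has a $(q^r-1)$-th root by Hensel's lemma. Hence
\[
\sum_{n\geq 1}\#\mathrm{Iso}_{W/\mu^nW}(\phi,\varphi)=(q^r-1)\min_{i}\nu(g_i),
\]
and combining with the elementary estimate $\sum_{i=1}^{r-1}\delta_i\nu(g_i)\geq\bigl(\sum_{i=1}^{r-1}\delta_i\bigr)\min_{i}\nu(g_i)$ gives the per-class bound; summing over $[\phi]\in\mathrm{CM}(\mathcal{O}_K,\iota)$ completes the proof. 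The principal obstacle is the classification above — controlling units of $W_n\{\tau\}$ over the Artinian local ring $W_n$ to rule out non-constant isomorphisms whose higher $\tau$-coefficients live in the nilpotent ideal.
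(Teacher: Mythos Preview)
Your proof is correct and follows essentially the same route as the paper: establish the per-class inequality and then sum over $[\phi]\in\mathrm{CM}(\mathcal{O}_K,\iota)$. The paper normalizes the leading coefficient to $1$ via Hensel and then simply asserts $\#\mathrm{Aut}_{W/\mu^i W}(\varphi)=\#\{c\in\bar{\F}_q\mid c^{q^r-1}=1\}=q^r-1$, implicitly taking for granted that isomorphisms over the Artinian ring $W_n$ are constants; your leading-$\tau$-coefficient argument in $W_n\{\tau\}$ supplies exactly that missing justification.
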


 Let us consider an isomorphism class $[\phi]\in {\rm CM}(\mathcal{O}_K,\iota)$ with representative $\phi$ defined over $W$. On the other hand, $\varphi_T=T+\tau^r$ which can also be defined over $W$. Fix a $J$-invariant $J^{(\delta_1,\cdots,\delta_{r-1})}$, we always have $J^{(\delta_1,\cdots,\delta_{r-1})}(\varphi)=0$. Thus

$$\nu(J^{(\delta_1,\cdots,\delta_{r-1})}(\phi))=\nu(J^{(\delta_1,\cdots,\delta_{r-1})}(\phi)-J^{(\delta_1,\cdots,\delta_{r-1})}(\varphi)).$$

Theorem \ref{thm1} can be proved immediately from the following lemma:

\begin{lem}\label{Jest}

$$
\nu(J^{(\delta_1,\cdots,\delta_{r-1})}(\phi))\geqslant \frac{\sum_{i=1}^{r-1}\delta_i}{q^r-1}\sum_{n\geqslant 1} \#{\rm Iso}_{W/\mu^nW}(\phi, \varphi)
$$

\end{lem}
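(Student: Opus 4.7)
The plan is to read the valuation of $J^{(\delta_1,\ldots,\delta_{r-1})}(\phi)$ directly off the coefficients of $\phi_T = T + g_1\tau + \cdots + g_{r-1}\tau^{r-1} + \Delta\tau^r$ by extracting explicit conditions from each isomorphism over $W/\mu^n W$ and then tallying them across $n$. Since $\phi$ has CM by $\mathcal{O}_K$ and therefore has good reduction at $\mathfrak{P}$ after base change to $W$, I will freely use $g_i \in W$ and $\Delta \in W^\times$.

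The first step is to show that every $u \in \mathrm{Iso}_{W/\mu^n W}(\phi,\varphi)$ is a constant $c \in (W/\mu^n W)^\times$. Reduction modulo $\mu$ collapses $u$ to a constant over $\bar{\F}_\fp$, which forces the higher-$\tau$-degree coefficients of $u$ to be nilpotent; then comparing leading $\tau$-coefficients in $u\phi_T = \varphi_T u$ together with the unit status of $\Delta$ will rule out any positive-degree terms. This is the delicate step, because the presence of nilpotents in $W/\mu^n W$ allows for higher-degree units in $W/\mu^n W\{\tau\}$ that do not exist over a field, and I expect it to be the main obstacle of the argument.

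With $u = c$, the intertwining relation $c\phi_T = \varphi_T c = Tc + c^{q^r}\tau^r$ unpacks to the coefficient conditions $c\,g_i \equiv 0 \pmod{\mu^n}$ for $1 \leq i \leq r-1$ and $c^{q^r-1} \equiv \Delta \pmod{\mu^n}$. Since $c$ is a unit, the first family is equivalent to $\nu(g_i) \geq n$ for each $i$. The second is always solvable, because $\bar{\F}_\fp$ is algebraically closed and Hensel lifts a $(q^r-1)$-th root of $\bar{\Delta}$ from the residue field; the ambiguity in $c$ is precisely the group of $(q^r-1)$-th roots of unity, lifted from $\F_{q^r} \subset \bar{\F}_\fp$ into $W^\times$ again via Hensel. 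Setting $N := \min_{1 \leq i \leq r-1} \nu(g_i)$, this gives $\#\mathrm{Iso}_{W/\mu^n W}(\phi,\varphi) = q^r - 1$ for $n \leq N$ and $0$ otherwise.

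The final step is bookkeeping: the sum telescopes to $\sum_{n \geq 1} \#\mathrm{Iso}_{W/\mu^n W}(\phi,\varphi) = (q^r - 1)N$, while $\nu(J^{(\delta_1,\ldots,\delta_{r-1})}(\phi)) = \sum_{i=1}^{r-1}\delta_i\,\nu(g_i) \geq N \sum_{i=1}^{r-1}\delta_i$ since $\Delta \in W^\times$ contributes nothing. Clearing the factor of $q^r - 1$ gives the claimed lower bound, and the inequality is strict exactly when the $\nu(g_i)$ are not all equal to $N$.
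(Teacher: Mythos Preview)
Your proof is correct and follows essentially the same route as the paper's: both identify $N=\min_i\nu(g_i)$, compute $\sum_{n\ge 1}\#\mathrm{Iso}_{W/\mu^nW}(\phi,\varphi)=(q^r-1)N$, and bound $\nu(J^{(\delta_1,\ldots,\delta_{r-1})}(\phi))\ge N\sum_i\delta_i$. The only cosmetic difference is that the paper first replaces $\phi$ by an isomorphic copy with $\Delta=1$ via Hensel and then asserts $\#\mathrm{Aut}_{W/\mu^iW}(\varphi)=q^r-1$ without details, whereas you keep $\Delta\in W^\times$ arbitrary and supply the argument (nilpotent higher coefficients plus the leading-term comparison $c_m\Delta^{q^m}=c_m^{q^r}$) showing every isomorphism over $W/\mu^nW$ is a constant; this actually fills a small gap the paper leaves implicit.
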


\begin{proof}
If $\phi$ is not isomorphic to $\varphi$ after reduction modulo $\mu$, then $\phi \not\cong \varphi$ after reduction modulo $\mu^n$ for any $n\geqslant 1$. Hence the right hand side is equal to $0$. As the left hand side is non-negative, the inequality always holds.

Let us assume that $\phi$ is isomorphic to $\varphi$ after reduction modulo $\mu^k$ but they are not isomorphic after reduction modulo $\mu^{k+1}$. Since the residue field $W/\mu W$ is algebraically closed, we may apply Hensel's lemma to replace $\phi$ by an isomorphic copy over $W$ such that
$$\phi_T=T+g_1\tau+\cdots+g_{r-1}\tau^{r-1}+\tau^r.$$

Our assumption then implies $g_i\equiv 0 \mod \mu^k$ for all $1\leqslant i\leqslant r-1$, but some $ g_i\not\equiv 0 \mod \mu^{k+1}$.

We compute $J^{(\delta_1,\cdots,\delta_{r-1})}(\phi)=g_1^{\delta_1}\cdot\cdots\cdot g_{r-1}^{\delta_{r-1}}$. Since $g_i\equiv 0 \mod \mu^k$, we get
\begin{equation}
\nu(J^{(\delta_1,\cdots,\delta_{r-1})}(\phi))\geqslant k(\delta_1+\cdots+\delta_{r-1}).\tag{3}
\end{equation}

On the other hand, from our assumption that $ \phi\cong \varphi \mod \mu^i$ for  $1 \leqslant i \leqslant k$, we have 
$$\# {\rm Iso}_{W/\mu^iW}(\phi, \varphi)=\# {\rm Aut}_{W/\mu^i W}(\varphi)=\# \{c\in \bar{\F}_q \mid c^{q^r-1}=1\} =\# \F_{q^r}^*=q^r-1 \textrm{ for } 1 \leqslant i \leqslant k.$$ Thus we know that
\begin{equation}
\sum_{n\geqslant 1} \#{\rm Iso}_{W/\mu^nW}(\phi, \varphi)=k(q^r-1). \tag{4}
\end{equation}

Now we can combine (3) and (4) together. From the condition (1) in the definition of $J$-invariant (see section \ref{j}), we compute the smallest possible number $(\delta_1+\cdots+\delta_{r-1})$ for $\delta_i$'s satisfy the equation 
$$ \delta_1(q-1)+\delta_2(q^2-1)+\cdots+\delta_{r-1}(q^{r-1}-1)=\delta_r(q^r-1).$$

The smallest possible is to make $\delta_{r-1}$ as large as possible. If $\delta_r=1$, then we choose $\delta_{r-1}=q$, $\delta_1=1$, and other $\delta_i=0$. This gives us $$(\delta_1+\cdots+\delta_{r-1})\geqslant q+1.$$ If $\delta_r>1$, the smallest possible $\delta_i's$ would have $\delta_{r-1}=\delta_r\cdot q$. Thus we have $$(\delta_1+\cdots+\delta_{r-1})\geqslant \delta_{r-1}=\delta_r\cdot q>q+1.$$

Therefore, the inequality (3) implies

$$J^{(\delta_1,\cdots,\delta_{r-1})}(\phi)\geqslant k(\delta_1+\cdots+\delta_{r-1})\geqslant k(q+1).$$

Thus we have

$$
\begin{array}{lll}
\nu(J^{(\delta_1,\cdots,\delta_{r-1})}(\phi)) &\geqslant k(\sum_{i=1}^{r-1}\delta_i)=\frac{\sum_{i=1}^{r-1}\delta_i}{q^r-1}\cdot k(q^r-1)=\frac{\sum_{i=1}^{r-1}\delta_i}{q^r-1}\sum_{n\geqslant 1} \#{\rm Iso}_{W/\mu^nW}(\phi, \varphi)\\
\end{array}.$$

\end{proof}

Theorem \ref{thm1} is now a direct application of Lemma \ref{Jest}:

\begin{proof}[Proof of Theorem \ref{thm1}]
$$
\begin{array}{lll}

\nu(J_{\mathcal{O}_K}^{(\delta_1,\cdots,\delta_{r-1})})&=&\nu\left(  \prod_{[\phi]\in {\rm CM}(\mathcal{O}_K,\iota)}J^{(\delta_1,\cdots,\delta_{r-1})}(\phi)   \right)\\
\ \\
&=&\sum_{[\phi]\in{\rm CM}(\mathcal{O}_K,\iota)}\nu\left(J^{(\delta_1,\cdots,\delta_{r-1})}(\phi)-J^{(\delta_1,\cdots,\delta_{r-1})}(\varphi) \right).

\end{array}$$
Then apply Lemma \ref{Jest} on each summand in the above equation.
\end{proof}

Unlike the rank-$2$ case, it is still possible to have that $\mu^i$ divides $J^{(\delta_1,\cdots,\delta_{r-1})}(\phi)$ even $\phi\not\cong \varphi \mod \mu^i$ for some $i\geqslant 1$. As our estimation only takes into account the valuation contributed from isomorphism after reduction modulo power of $\mu$, the right hand side of inequalities (1) or (2)  is certainly smaller than $\nu(J_{\mathcal{O}_K}^{(\delta_1,\cdots,\delta_{r-1})})$. One might be curious on when we can reach the equalites in Theorem \ref{thm1}, we conclude the conditions in the corollary below:

\begin{cor}\label{cjest}
The equality  in Lemma \ref{Jest} happens when the following condition holds:

\begin{enumerate}
\item[] For the chosen basic $J$-invariant $J^{(\delta_1,\cdots,\delta_{r-1})}$, if $\phi\not\cong\varphi \mod \mu^i$ for some $i\geqslant 1$, then $g_j\not\equiv 0 \mod \mu^i$ for all $1\leqslant j\leqslant r-1$ such that $\delta_j\neq0$.
\end{enumerate}

\end{cor}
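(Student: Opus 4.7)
The plan is to revisit the single inequality inside the proof of Lemma \ref{Jest}, namely the estimate $\nu(J^{(\delta_1,\ldots,\delta_{r-1})}(\phi)) \geq k(\delta_1 + \cdots + \delta_{r-1})$ of equation~(3) in that proof (with $k$ the largest integer satisfying $\phi \cong \varphi \mod \mu^k$), and to pin down exactly when it is sharp. The other ingredient used by the lemma, namely the identity $\sum_n \#{\rm Iso}_{W/\mu^n W}(\phi,\varphi) = k(q^r-1)$, is already an equality, so all slack in Lemma \ref{Jest} comes from that one place.

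First I would record that the valuations $\nu(g_j)$ depend only on the isomorphism class of $\phi$: any scalar isomorphism by $c \in W^*$ replaces $g_j$ by $c^{1-q^j}g_j$, and $\nu(c)=0$. For the Hensel-normalized model of the lemma (available whenever $k \geq 1$) one has $\Delta = 1$, and in general one still has $\nu(\Delta)=0$ because the integral model has good reduction; in either case
\[
\nu(J^{(\delta_1,\ldots,\delta_{r-1})}(\phi)) \;=\; \sum_{j=1}^{r-1}\delta_j\,\nu(g_j).
\]
Since $\phi \cong \varphi \mod \mu^k$ is equivalent to $g_j \equiv 0 \mod \mu^k$ for every $j<r$ (using that isomorphisms of rank-$r$ Drinfeld modules are scalars and that $W/\mu^k W$ contains all $(q^r-1)$-th roots of its units by Hensel), each $\nu(g_j) \geq k$ is automatic, and the inequality
\[
\sum_{j=1}^{r-1}\delta_j\,\nu(g_j) \;\geq\; k\sum_{j=1}^{r-1}\delta_j
\]
is an equality precisely when $\nu(g_j)=k$ for every $j$ with $\delta_j \neq 0$.

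The final step is to rewrite this condition in the language of the corollary. Because $\nu(g_j) \geq k$ is free, $\nu(g_j)=k$ is the same as $g_j \not\equiv 0 \mod \mu^{k+1}$, and the set of $i \geq 1$ with $\phi \not\cong \varphi \mod \mu^i$ is precisely $\{k+1, k+2, \ldots\}$. Asking that $g_j \not\equiv 0 \mod \mu^i$ for every such $i$ is therefore generated by its tightest instance $i=k+1$, with all larger $i$ automatic once that one holds. Hence the corollary's hypothesis is literally the equality criterion extracted in the previous paragraph.

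I do not anticipate a serious obstacle; the one point that calls for a moment of care is the boundary case $k=0$, where no Hensel normalization is available, but the formula $\nu(J) = \sum\delta_j\,\nu(g_j)$ still holds for the original integral model (since $\Delta \in W^*$), and the corollary's condition specializes to $\nu(g_j)=0$ for each $j$ with $\delta_j \neq 0$, matching the requirement $\nu(J)=0$ for equality in the otherwise vacuous bound.
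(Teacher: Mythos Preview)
Your proposal is correct and follows the same approach as the paper, which simply records ``Immediate from the proof of Lemma \ref{Jest}.'' You have unpacked that remark in detail: the only slack in the lemma is inequality~(3), and equality there amounts to $\nu(g_j)=k$ for each $j$ with $\delta_j\neq 0$, which you correctly identify with the corollary's hypothesis at $i=k+1$. Your extra care with the boundary case $k=0$ and with the well-definedness of $\nu(g_j)$ under scalar isomorphism is sound and goes slightly beyond what the paper spells out.
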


\begin{proof}
Immediate from the proof of Lemma \ref{Jest}

\end{proof}

Hence we can describe when will the inequality in Theorem \ref{thm1} becomes an equality. 

\begin{cor}\label{corthm1}

The equality  in Theorem \ref{thm1} happens when the condition in Corollary \ref{cjest} is satisfied for each representative $\phi$ of $[\phi]\in\textrm{CM}(\mathcal{O}_K,\iota)$ 
\end{cor}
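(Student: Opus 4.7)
The plan is to reduce the equality statement in Theorem \ref{thm1} to the term-wise sharpness of Lemma \ref{Jest}, and then invoke Corollary \ref{cjest} for each representative. The proof of Theorem \ref{thm1} already proceeds via the decomposition
\[
\nu(J_{\mathcal{O}_K}^{(\delta_1,\cdots,\delta_{r-1})})=\sum_{[\phi]\in{\rm CM}(\mathcal{O}_K,\iota)}\nu\bigl(J^{(\delta_1,\cdots,\delta_{r-1})}(\phi)\bigr),
\]
followed by an application of Lemma \ref{Jest} to each summand. Consequently, equality in Theorem \ref{thm1} is equivalent to simultaneous equality in Lemma \ref{Jest} for every $[\phi]\in\textrm{CM}(\mathcal{O}_K,\iota)$.

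To handle a single representative, I would fix $\phi$ and, as in the proof of Lemma \ref{Jest}, apply Hensel's lemma to normalize $\phi_T=T+g_1\tau+\cdots+g_{r-1}\tau^{r-1}+\tau^r$. Let $k$ be the largest integer with $\phi\cong\varphi\bmod\mu^k$; then $k=\min_j\nu(g_j)$, and the proof of Lemma \ref{Jest} gives $\sum_{n\geqslant 1}\#{\rm Iso}_{W/\mu^nW}(\phi,\varphi)=k(q^r-1)$. The inequality of Lemma \ref{Jest} thereby reduces to
\[
\sum_{j=1}^{r-1}\delta_j\,\nu(g_j)\ \geqslant\ k\sum_{j=1}^{r-1}\delta_j,
\]
which is sharp precisely when $\nu(g_j)=k$ for every $j$ with $\delta_j\neq 0$.

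At this point I would apply the hypothesis from Corollary \ref{cjest} with $i=k+1$: since by the maximality of $k$ we have $\phi\not\cong\varphi\bmod\mu^{k+1}$, the condition forces $\nu(g_j)<k+1$ for all $j$ with $\delta_j\neq 0$, i.e.\ $\nu(g_j)\leqslant k$. Combined with $\nu(g_j)\geqslant k$ from the definition of $k$, this pins down $\nu(g_j)=k$ for every such $j$, so Lemma \ref{Jest} is tight for $\phi$. Summing these term-wise equalities over all $[\phi]\in\textrm{CM}(\mathcal{O}_K,\iota)$ yields the desired equality in Theorem \ref{thm1}. The whole argument is essentially bookkeeping on top of Corollary \ref{cjest} and the decomposition already established, so I do not foresee any substantive obstacle beyond carefully tracking which valuation inequalities are tight.
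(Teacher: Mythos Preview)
Your proposal is correct and follows exactly the approach the paper intends: the paper does not even write out a proof for this corollary, since it is immediate from the decomposition in the proof of Theorem \ref{thm1} together with Corollary \ref{cjest} applied term by term. Your detailed unpacking of why the condition of Corollary \ref{cjest} forces $\nu(g_j)=k$ for all $j$ with $\delta_j\neq 0$ is accurate and simply makes explicit what the paper leaves implicit.
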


However, Corollary \ref{corthm1} involves in explicit coefficients of Drinfeld modules with CM by $\mathcal{O}_K$. In the computational perspective, it is hard to list all the Drinfeld modules having CM by $\mathcal{O}_K$ up to isomorphism over $\mathbb{C}_\infty$.

\begin{note}
In the end of the subsection, we generalized Lemma \ref{Jest} to the estimation of the valuation $\nu$ of the difference 
$$J^{(\delta_1,\cdots,\delta_{r-1})}(\phi)-J^{(\delta_1,\cdots,\delta_{r-1})}(\phi')$$
for arbitrary two rank-$r$ Drinfeld modules over $W$ with good reduction.  We formulate a rather coarse estimation in the proposition below. \end{note}

\begin{prop}\label{genjest}
Let $W$ be a complete discrete valuation ring with valution $\nu$ and uniformizer $\mu$. And suppose that the residue field $W/\mu W$ is algebraically closed.  Let $\phi$ and $\phi'$ be two rank-$r$ Drinfeld module over $W$ both have good reduction. Let $J^{(\delta_1,\cdots,\delta_{r-1})}$ be a $J$-invariant defined in section 2.3. Then

$$\nu(J^{(\delta_1,\cdots,\delta_{r-1})}(\phi)-J^{(\delta_1,\cdots,\delta_{r-1})}(\phi'))\geqslant \frac{1}{q^r-1}\sum_{n\geqslant 1} \#{\rm Iso}_{W/\mu^nW}(\phi, \phi').$$

\end{prop}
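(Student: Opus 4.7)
The plan is to mirror the proof of Lemma \ref{Jest}, with the special shape of $\varphi$ (whose lower-order coefficients vanish) replaced by a coefficient-wise congruence argument. First I would dispose of the trivial case: if $\phi\not\cong\phi'\bmod\mu$, then ${\rm Iso}_{W/\mu^nW}(\phi,\phi')=\emptyset$ for every $n\geqslant 1$, so the right-hand side is zero. Otherwise let $k\geqslant 1$ be the largest integer with $\phi\cong\phi'\bmod\mu^k$; if no finite such $k$ exists, completeness of $W$ forces $\phi\cong\phi'$ over $W$, so the two basic $J$-invariants coincide and the left-hand side is $+\infty$.

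Assume henceforth that $k$ is finite. Any isomorphism $\phi\cong\phi'\bmod\mu^k$ is realized by a single unit $\bar c\in(W/\mu^kW)^\times$; lifting $\bar c$ to a unit $c\in W^\times$ and replacing $\phi$ by the isomorphic copy $c\phi c^{-1}$, I may assume that the coefficients of
\[
\phi_T=T+g_1\tau+\cdots+g_{r-1}\tau^{r-1}+\Delta\tau^r\quad\text{and}\quad \phi'_T=T+g'_1\tau+\cdots+g'_{r-1}\tau^{r-1}+\Delta'\tau^r
\]
satisfy $g_j\equiv g'_j\bmod\mu^k$ for all $j$ and $\Delta\equiv\Delta'\bmod\mu^k$. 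Because $\phi$ and $\phi'$ both have good reduction, $\Delta,\Delta'\in W^\times$, so the denominator of
\[
J^{(\delta_1,\dots,\delta_{r-1})}(\phi)-J^{(\delta_1,\dots,\delta_{r-1})}(\phi')=\frac{g_1^{\delta_1}\cdots g_{r-1}^{\delta_{r-1}}\Delta'^{\delta_r}-g_1'^{\delta_1}\cdots g_{r-1}'^{\delta_{r-1}}\Delta^{\delta_r}}{(\Delta\Delta')^{\delta_r}}
\]
is a unit in $W$, while the numerator is divisible by $\mu^k$ by direct coefficient comparison. This yields $\nu\bigl(J^{(\delta_1,\dots,\delta_{r-1})}(\phi)-J^{(\delta_1,\dots,\delta_{r-1})}(\phi')\bigr)\geqslant k$.

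To finish, I would bound the isomorphism counts. For $1\leqslant i\leqslant k$, the set ${\rm Iso}_{W/\mu^iW}(\phi,\phi')$ is a torsor under ${\rm Aut}_{W/\mu^iW}(\phi')$, and any such automorphism is a constant $c$ obeying $c^{q^r-1}=1$ (from commutation with the leading term $\Delta'\tau^r$); since $q^r-1$ is coprime to the residue characteristic, Hensel's lemma identifies these with the $(q^r-1)$-th roots of unity in the algebraically closed residue field, giving at most $q^r-1$ elements. For $i>k$ the iso set is empty. Summing produces $\sum_{n\geqslant 1}\#{\rm Iso}_{W/\mu^nW}(\phi,\phi')\leqslant k(q^r-1)$, which combined with the previous paragraph proves the stated inequality. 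The principal technical point, and also the reason the bound here is weaker by a factor of $\sum_i\delta_i$ than the one in Lemma \ref{Jest}, is that without the normalized target $\phi'=\varphi$ one cannot upgrade the congruence $g_j\equiv g'_j\bmod\mu^k$ to the divisibility $g_j\equiv 0\bmod\mu^k$, so only a single power $\mu^k$ can be extracted from the $J$-invariant difference rather than the stronger $\mu^{k\sum_i\delta_i}$.
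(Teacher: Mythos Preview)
Your proof is correct and follows the same overall strategy as the paper's: dispose of the trivial cases, fix the maximal $k$ with $\phi\cong\phi'\bmod\mu^k$, show the $J$-invariant difference has valuation at least $k$, and bound $\sum_n\#{\rm Iso}_{W/\mu^nW}(\phi,\phi')\leqslant k(q^r-1)$ via the automorphism group.

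The one substantive difference is in the normalization step. The paper normalizes both leading coefficients to $1$ via Hensel, then records the residual twist $c\in\F_{q^r}^*$ with $g_i'=c^{q^i-1}g_i+\mu^ku_i$, and must then perform a case split according to whether $\prod g_i^{\delta_i}$ vanishes; in the nonvanishing case it uses condition~(1) on the $\delta_i$'s to see that $c^{\sum(q^i-1)\delta_i}=1$. Your choice to instead conjugate $\phi$ by a lift $c\in W^\times$ of the mod-$\mu^k$ isomorphism, so that $g_j\equiv g_j'$ and $\Delta\equiv\Delta'$ directly, absorbs that twist into the isomorphism-invariance of $J^{(\delta_1,\dots,\delta_{r-1})}$ and makes the numerator congruence immediate with no case analysis. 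This is a genuine streamlining of the paper's argument. (A very minor remark: for the ``no finite $k$'' case you do not actually need $\phi\cong\phi'$ over $W$; it suffices that $J(\phi)\equiv J(\phi')\bmod\mu^n$ for all $n$, which follows at once from isomorphism-invariance of $J$.)
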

\begin{proof}
If $\phi$ and $\phi'$ are isomorphic over $W$, then both sides of the inequality are infinity, done. If $\phi$ and $\phi'$ are not isomorphic after reduction modulo $\mu$, then the right hand side is equal to $0$ and the inequality always hold. Therefore, we may assume that $\phi$ and $\phi'$ are not isomorphic over $W$, but they are isomorphic after reduction modulo $\mu$.

Now we may assume that $\phi$ is isomorphic to $\phi'$ after reduction modulo $\mu^k$ for some $k\in \mathbb{Z}_{\geqslant1}$, but they are not isomorphic after reduction modulo $\mu^{k+1}$. Since $W/\mu W$ is algebraically closed, Hensel's lemma implies that we may normalize the leading coefficient of $\phi_T$ and $\phi'_T$ by replacing with isomorphic copies over $W$ if necessary. Thus we may write 

$$\phi_T=T+g_1\tau+\cdots+g_{r-1}\tau^{r-1}+\tau^r, \textrm{ and } \phi'_T=T+g_1'\tau+\cdots+g_{r-1}'\tau^{r-1}+\tau^r.$$
Here $g_i$ and $g_i'$ are in $W$ for all $1\leqslant i\leqslant r-1$. Since $\phi$ and $\phi'$ are isomorphic modulo $\mu^k$ but not isomorphic modulo $\mu^{k+1}$, there is some $c\in \F_{q^r}^*\subset W$ such that

$$\Bigg\{
\begin{array}{cc}
c^{q^i-1}g_i\equiv g_i'\mod \mu^k & \textrm{ for any } 1\leqslant i\leqslant r-1\\
\ \\
c^{q^i-1}g_i\not\equiv g_i'\mod \mu^{k+1} & \textrm{ for some } 1\leqslant i\leqslant r-1
\end{array}.$$

Thus we can write $g_i'=c^{q^i-1}g_i+\mu^k\cdot u_i$ for any $1\leqslant i\leqslant r-1$. Here $u_i\in W$ for all $i$, and there is some $u_j$ belongs to the unit group $W^*$ of $W$. 

Hence we have the following cases:
\begin{enumerate}
\item[case 1.] $g_1^{\delta_1}\cdot\cdots\cdot g_{r-1}^{\delta_{r-1}}= 0 $  in $W$. Then we let separate the set $\{1,2,\cdots, r-1\}$ into 
$$\mathcal{A}:=\{1\leqslant i_\ell \leqslant r-1\mid g_{i_\ell}=0\} \textrm{ and } \mathcal{B}:=\{1,\cdots, r-1\}-\mathcal{A}.$$
$$
\begin{array}{lll}
\nu(J^{(\delta_1,\cdots,\delta_{r-1})}(\phi)-J^{(\delta_1,\cdots,\delta_{r-1})}(\phi'))\\
\ \\
=\nu[(g_1^{\delta_1}\cdot\cdots\cdot g_{r-1}^{\delta_{r-1}})-(g_1'^{\delta_1}\cdot\cdots\cdot g_{r-1}'^{\delta_{r-1}})]\\
\ &\ &\ \\
=\nu[(c^{q-1}g_1+\mu^ku_1)^{\delta_1}\cdot\cdots\cdot(c^{q^{r-1}-1}g_{r-1}+\mu^ku_{r-1})^{\delta_{r-1}} ]\\
\ &\ &\ \\
\geqslant k\cdot(\sum_{j\in \mathcal{A}}\delta_j)
\end{array}.
$$

\item[case 2.] $g_1^{\delta_1}\cdot\cdots\cdot g_{r-1}^{\delta_{r-1}}\neq 0$ in $W$. Then we let separate the set $\{1,2,\cdots, r-1\}$ into 
$$\mathcal{A}:=\{1\leqslant i_\ell \leqslant r-1\mid \delta_\ell\neq0\} \textrm{ and } \mathcal{B}:=\{1,\cdots, r-1\}-\mathcal{A}.$$
Thus we also have $g_\ell\neq 0$ for any $\ell\in \mathcal{A}$. Now we can compute

$$
\begin{array}{lll}
\nu(J^{(\delta_1,\cdots,\delta_{r-1})}(\phi)-J^{(\delta_1,\cdots,\delta_{r-1})}(\phi'))=\nu[(g_1^{\delta_1}\cdot\cdots\cdot g_{r-1}^{\delta_{r-1}})-(g_1'^{\delta_1}\cdot\cdots\cdot g_{r-1}'^{\delta_{r-1}})]\\
\ &\ &\ \\
=\nu[(1-c^{\sum_{1\leqslant j\leqslant r-1} (q^j-1)\delta_j})(g_1^{\delta_1}\cdot\cdots\cdot g_{r-1}^{\delta_{r-1}})-\mu^k(\sum_{\ell\in \mathcal{A}}\delta_\ell u_\ell(c^{q^\ell-1}g_\ell)^{\delta_\ell-1})-\textrm{ higher $\mu$-power terms}]&&\\
\ &\ &\ \\
\geqslant k
\end{array}.
$$
Here the term $(1-c^{\sum_{1\leqslant j\leqslant r-1} (q^j-1)\delta_j})(g_1^{\delta_1}\cdot\cdots\cdot g_{r-1}^{\delta_{r-1}})$ is equal to zero because of condition (1) in page 3 and the fact that $c\in\F_{q^r}^*$.

\end{enumerate}

On the other hand, since $\phi\cong \phi' \mod \mu^i$ for $1\leqslant i\leqslant k$ and $\phi\not\cong \phi' \mod \mu^{k+1}$, we get

$$\# {\rm Iso}_{W/\mu^iW}(\phi, \phi')=\# {\rm Aut}_{W/\mu^i W}(\phi)=\#\{ \F_{q^r}^*\bigcap_{1\leqslant j\leqslant r-1} \F_{q^j}^*\mid g_j\not\equiv 0\mod \mu^i\} \textrm{ for } 1 \leqslant i \leqslant k.$$

Therefore, $\# {\rm Iso}_{W/\mu^iW}(\phi, \phi')\leqslant q^r-1$ for any $1\leqslant i\leqslant k$. This implies 
$$\sum_{n\geqslant 1} \#{\rm Iso}_{W/\mu^nW}(\phi, \phi')= \sum_{i=1}^k g.c.d.(q^r-1, q^j-1 \mid g_j\not\equiv 0 \mod \mu^i \textrm{ for } 1\leqslant j\leqslant r-1)\leqslant k\cdot (q^r-1)$$

Combining our estimation on $\nu(J^{(\delta_1,\cdots,\delta_{r-1})}(\phi)-J^{(\delta_1,\cdots,\delta_{r-1})}(\phi'))$ and $\sum_{n\geqslant 1} \#{\rm Iso}_{W/\mu^nW}(\phi, \phi')$, we achieve

$$\nu(J^{(\delta_1,\cdots,\delta_{r-1})}(\phi)-J^{(\delta_1,\cdots,\delta_{r-1})}(\phi'))\geqslant \frac{1}{q^r-1}\sum_{n\geqslant 1} \#{\rm Iso}_{W/\mu^nW}(\phi, \phi').$$

\end{proof}

\begin{ex}
Let $W$ be as in Proposition \ref{genjest}. We set $\phi$ to be the sparse Drinfeld module $\phi_T=T+\tau^{j}+\tau^r$ with $1\leqslant j \leqslant r-1$. Consider any rank-$r$ Drinfeld module $\phi'$ with good reduction, and $J$-invariant $J^{(\delta_1,\cdots,\delta_{r-1})}$. Suppose that $\phi\cong \phi' \mod \mu^k$ but $\phi\not\cong\phi' \mod \mu^{k+1}$, then we have
$$\Bigg\{
\begin{array}{cc}
\nu(J^{(\delta_1,\cdots,\delta_{r-1})}(\phi)-J^{(\delta_1,\cdots,\delta_{r-1})}(\phi'))\geqslant k\cdot(\sum_{1\leqslant \ell\neq j \leqslant r-1}\delta_{\ell})\\
\ \\
\sum_{n\geqslant 1} \#{\rm Iso}_{W/\mu^nW}(\phi, \phi')= \sum_{i=1}^k g.c.d.(q^r-1, q^{j}-1)=k\cdot g.c.d.(q^r-1, q^{j}-1)\end{array}.$$

Therefore, we can refine the inequality in Proposition \ref{genjest} into 
$$\nu(J^{(\delta_1,\cdots,\delta_{r-1})}(\phi)-J^{(\delta_1,\cdots,\delta_{r-1})}(\phi'))\geqslant \frac{\sum_{1\leqslant \ell\neq j \leqslant r-1}\delta_{\ell}}{g.c.d.(q^r-1, q^{j}-1)}\sum_{n\geqslant 1} \#{\rm Iso}_{W/\mu^nW}(\phi, \phi').$$

\end{ex}
\subsection{CM-lifting}

In order to compute $\nu(J_{\mathcal{O}_K}^{(\delta_1,\cdots,\delta_{r-1})})$, Theorem \ref{thm1} tells us that it is enough to count the number of isomorphisms between ``$\phi\mod \mu^n$'' and ``$\varphi\mod \mu^n$'' for each $[\phi]\in{\rm{CM}}(\mathcal{O}_K,\iota)$. In this subsection, we apply deformation theory for Drinfeld modules with level structure to study $\#{\rm Iso}_{W/\mu^nW}(\phi, \varphi)$. Then we can turn the number of isomorphisms between a CM Drinfeld module $\phi$ and $\varphi$ into counting the number of certain elements in the endomorphism ring of $\varphi_T=T+\tau^r$.

Since $\phi$ over $W$ has CM by $\mathcal{O}_K$. We may view $\phi$ as a Drinfeld $\mathcal{O}_K$-module of rank $1$ over $W$. Thus $\phi_{s_i}\in {\rm End}_W(\phi)$ is well-defined for any $1\leqslant i\leqslant t$. By taking reduction modulo $\mu^n$, we get $\bar{\phi}_{s_i}\in {\rm End}_{W/\mu^nW}(\phi)$.

\begin{defi}
Given $w_1, w_2 \in {\rm Iso}_{W/\mu^nW}(\varphi,\phi)$, we define $w_1\sim w_2$ if and only if there is some $\xi\in\bar{\F}_{q}^*$ such that $w_2=\xi w_1$ and $$\xi^{-1}\bar{\phi}_{a}\xi=\bar{\phi}_{a} \textrm{ for } a\in \mathcal{O}_K.$$

 It is clear that $\sim$ is an equivalence relation. Hence we can divide the disjoint union $\sqcup_{[\phi]\in {\rm CM}(\mathcal{O}_K,\iota)}{\rm Iso}_{W/\mu^nW}(\varphi,\phi)$ into equivalence classes with respect to $\sim$.
\end{defi}

\begin{thm}\label{thm2}
Given a positive integer $n$, there is a one-to-one correspondence between equivalence classes of $$\sqcup_{[\phi]\in {\rm CM}(\mathcal{O}_K,\iota)}{\rm Iso}_{W/\mu^nW}(\varphi,\phi) \textrm{ with respect to }\sim,$$ and the set  
{\center$S_n:=$ $\Bigg\{\eta:\mathcal{O}_K\hookrightarrow {\rm End}_{W/\mu^nW}(\varphi)\ \bigg|$
$
\begin{array}{cc}
 \eta \textrm{ is an $A$-algebra embedding},  \\
 \newline \eta|_A=\varphi\mod\mu^n, \textrm{and } \partial\circ\eta\equiv \iota \mod \mu^n  \\
\end{array}
$
$\Bigg\}.$}
\end{thm}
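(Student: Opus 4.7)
My plan is to exhibit mutually inverse maps between the two sets, with the CM-lifting theorems cited in the introduction providing the essential content in one direction.

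For the forward map, given an isomorphism $w\in{\rm Iso}_{W/\mu^n W}(\varphi,\phi)$ with $\phi$ a chosen $W$-representative of a class in ${\rm CM}(\mathcal{O}_K,\iota)$, I define
\[
\eta_w:\mathcal{O}_K\longrightarrow{\rm End}_{W/\mu^n W}(\varphi),\qquad \eta_w(a):=w^{-1}\bar\phi_a w,
\]
where $\bar\phi_a$ is the reduction mod $\mu^n$ of the CM-action of $\mathcal{O}_K$ on $\phi$. That $\eta_w$ is an injective ring homomorphism is immediate from the faithfulness of the $\mathcal{O}_K$-action on $\phi$. The intertwining $w\varphi_a=\phi_a w$ for $a\in A$ gives $\eta_w|_A=\varphi \mod \mu^n$, and commutativity of $\bar\phi_{\mathcal{O}_K}$ with the $A$-action on $\phi$ then forces $\eta_w(a)$ to lie in ${\rm End}_{W/\mu^n W}(\varphi)$ for every $a\in\mathcal{O}_K$. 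Since $\partial(w)$ is a nonzero scalar in the residue field, conjugation by $w$ drops to the identity under $\partial$, so $\partial(\eta_w(a))=\partial(\bar\phi_a)=\iota(a) \mod \mu^n$ by normalizability of $\phi$. Finally the relation $\sim$ was defined precisely so that $\eta_{\xi w_1}=\eta_{w_1}$ whenever $\xi\in\bar\F_q^*$ centralizes $\bar\phi_{\mathcal{O}_K}$, so the assignment descends to equivalence classes.

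For the reverse map, given $\eta\in S_n$ the pair $(\varphi\mod\mu^n,\eta)$ is a rank-$1$ Drinfeld $\mathcal{O}_K$-module over $W/\mu^n W$. I would invoke the Drinfeld-module analogue of the Serre-Tate lifting theorem together with Gross' theorem on lifts of height-$1$ formal $\mathcal{O}$-modules (the two inputs flagged in the introduction) to produce a canonical lift to a rank-$1$ Drinfeld $\mathcal{O}_K$-module $\tilde\phi$ over $W$, unique up to isomorphism fixing the reduction. The condition $\partial\circ\eta\equiv\iota\mod\mu^n$ pins down a specific normalizable class $[\phi]\in{\rm CM}(\mathcal{O}_K,\iota)$; comparing $\tilde\phi$ with a $W$-representative of $[\phi]$ produces $w_\eta:\varphi\to\phi$ modulo $\mu^n$ satisfying $\eta_{w_\eta}=\eta$ on the nose. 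Conversely, applied to $\eta_w$, the uniqueness clause of CM-lifting identifies $(\tilde\phi,w_{\eta_w})$ with $(\phi,w)$ up to a scalar $\xi\in\bar\F_q^*$ centralizing $\bar\phi_{\mathcal{O}_K}$, whence $w_{\eta_w}\sim w$.

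The principal obstacle is the existence and uniqueness of this CM-lifting. Abstractly one is handed a rank-$1$ Drinfeld $\mathcal{O}_K$-module over the Artinian ring $W/\mu^n W$ and must produce a unique lift over $W$ together with a matching change-of-basis at the level of the underlying rank-$r$ Drinfeld $A$-modules; moreover, the lift must descend to $W$ itself rather than to some ramified extension. Because $W$ is the completion of the maximal unramified extension of $H_{K,\mathfrak{P}}$ and $H_K/K$ is unramified above $\mathfrak{p}$, the compatibility $\partial\circ\eta\equiv\iota\mod\mu^n$ is what secures this descent. Once the two cited lifting theorems are invoked, the remaining bookkeeping is a formal diagram chase.
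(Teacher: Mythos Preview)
Your proposal is correct and follows essentially the same route as the paper: the forward map $[w]\mapsto\eta_w:=w^{-1}\bar\phi_{(\cdot)}w$ is identical, and the inverse is built exactly as the paper does by viewing $(\bar\varphi,\eta)$ as a rank-$1$ Drinfeld $\mathcal{O}_K$-module and invoking the Drinfeld/Serre--Tate deformation theory plus the height-$1$ formal-module lifting to produce a unique CM-lift $(\psi,\tilde\eta)$ over $W$, with injectivity coming from the uniqueness clause. The only point you leave slightly implicit is why the lift $\psi$ lands in ${\rm CM}(\mathcal{O}_K,\iota)$ rather than merely being a CM module over $W$: the paper handles this by appealing to the fine moduli space $M^1_I$ to globalize $\psi$ and then using the standing hypothesis that $K/F$ is normal to force normalizability, so this is where that hypothesis enters rather than through the $\partial\circ\eta\equiv\iota$ condition alone.
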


\begin{proof}

For any $w\in {\rm Iso}_{W/\mu^nW}(\varphi,\phi)$, we consider the composition of maps:

$$\eta_w:=w^{-1}\circ \bar{\phi}\circ w \in {\rm End}_{W/\mu^n W}(\varphi )$$

It is clear that the map $\eta_{w}$ satisfies the conditions $\eta|_A=\varphi\mod \mu^n$ and $\partial\circ\eta_w=\partial\circ \bar{\phi}\equiv \iota\mod \mu^n.$  Moreover, if there is some $w'\in {\rm Iso}_{W/\mu^nW}(\varphi, \phi)$ such that $w'\sim w$, then we have $w'=\xi w$ for some $\xi\in\bar{\F}_{q}^*$ with $\xi^{-1}\bar{\phi}_{s_i}\xi=\bar{\phi}_{s_i}$. Thus we have
$$\eta_{w'}=w^{-1}\circ \xi^{-1}\bar{\phi}\xi \circ w=w^{-1}\circ \bar{\phi}\circ w=\eta_{w}.$$
Therefore, we constructed a well-defined map $$[w]\in \textrm{ equivalent classes of } \sqcup_{[\phi]\in {\rm CM}(\mathcal{O}_K,\iota)}{\rm Iso}_{W/\mu^nW}(\varphi,\phi) \mapsto \eta_w\in S_n.$$ 

Now we prove the map is indeed surjective. Let us start from a given element $\eta\in S_{n}$. The first goal is to prove the following:  \begin{claim*}
$(\bar{\varphi}\equiv \varphi \mod \mu^n,\eta)$ over $W/\mu^n W$ can be lifted, uniquely up to $W$-isomorphism, to $(\psi,\tilde{\eta})$ over $W$, where 

\begin{enumerate}
\item[(a)]

$\psi$ is a rank-$r$ Drinfeld module over $W$ with CM by $\mathcal{O}_K$.

\item[(b)]
$\psi\equiv \varphi \mod \mu^n$.

\item[(c)]
$\tilde{\eta}:\mathcal{O}_K\hookrightarrow {\rm End}_W(\psi)$ is an $A$-algebra embedding, and $\tilde{\eta}\equiv \eta \mod \mu^n$ for all.
\end{enumerate}

\end{claim*}
\begin{proof}[Proof of Claim]

The main idea is inspired from the statements between Theorem 2.8 and 2.9 in page 6 of \cite{BB21}. Let us follow their use of notation. 
\begin{enumerate}
\item[$\bullet$]
$\fq=\mu W \bigcap \mathcal{O}_K$ a prime ideal of $\mathcal{O}_K$.
\item[$\bullet$]
$\mathcal{O}_\fq=$ completion of $\mathcal{O}_K$ at $\fq$. Fix a generator of $\fq\mathcal{O}_{\fq}$ to be $\omega$.

\item[$\bullet$]
$\check{\mathcal{O}}:=W$
\item[$\bullet$]
$\check{k}:=\check{\mathcal{O}}/\mu \check{\mathcal{O}}$
\end{enumerate}
We begin with the case $n=1$. Denote  $\underline{\varphi}$ by the reduction of ${\varphi}$ modulo $\mu$, and $\underline{\eta}\in S_1$. Then we may view $\underline{\varphi}$ as a rank-$1$ Drinfeld $\mathcal{O}_K$-module over $\check{k}$ by setting $$\underline{\varphi}_a=\underline{\eta}(a) \textrm{ for all } a\in\mathcal{O}_K.$$
Now we use Drinfeld's Deformation theory to lift $\underline{\varphi}$, the rank-$1$ Drinfeld $\mathcal{O}_K$-module over $\check{k}$, to a rank-$1$ Drinfeld $\mathcal{O}_K$-module over $\check{O}.$  By the Drinfeld module analogue of Serre-Tate theorem (\cite{D74}, proposition 5.4 or \cite{BB21}, Theorem 2.9), it is enough to construct a lifting of $\underline{\varphi}[\fq^{\infty}]$, the $\mathcal{O}_\fq$-divisible group of $\underline{\varphi}$, to $\check{O}$. By proposition 2.6 of \cite{BB21} and the fact that  $\underline{\varphi}$ is a rank-$1$ Drinfeld $\mathcal{O}_K$-module, the lifting problem becomes to construct a lifting of a height-$1$ formal $\mathcal{O}_\fq$-module corresponding to the $\mathcal{O}_\mathfrak{q}$-divisible group $\underline{\varphi}[\fq^\infty]$. Now for such a height-$1$ formal $\mathcal{O}_\mathfrak{q}$-module over $\check{k}$, we always have a lifting to $\check{\mathcal{O}}$, unique up to $\check{O}$-isomorphism. The existence of lifting is due to the existence of universal deformation of formal $\mathcal{O}_\fq$-modules, see Proposition 4.2 in \cite{D74}. The proposition also implies the uniqueness of lifting up to $\check{\mathcal{O}}$-isomorphism because our formal $\mathcal{O}_\fq$-module here has height $1$.   Thus we get a lifting $(\psi,\tilde{\eta})$ of $(\underline{\varphi},\underline{\eta})$ to $\check{\mathcal{O}}$. Note that in the above CM-lifting process we do not assume $\varphi\mod \mu$ has supersingular reduction.

For general $n\geqslant 2$, Drinfeld's deformation theory still works when replacing $\check{\mathcal{O}}$ by any Noetherian local $\check{\mathcal{O}}$-algebra. And we know that $W/\mu^n W$ is a Noetherian local $\check{\mathcal{O}}$-algebra, so we can run through the same process as in the case $n=1$ while replacing $\check{\mathcal{O}}=W$ by $\check{\mathcal{O}}=W/\mu^nW$. Indeed, we start with a pair $(\bar{\varphi},\eta)$ defined over $W/\mu^n W$. Consider the pair under reduction modulo $\mu$, we get $(\underline{\varphi}, \underline{\eta})$ defined over $\check{k}$. Running through the process as in the case $n=1$, we obtained a pair $(\psi',\eta')$ defined over $\check{\mathcal{O}}=W/\mu^n W$. Moreover, because such a lifting is unique up to $\check{\mathcal{O}}$-isomorphism, we have the following diagram over $W/\mu^n W$:
$$
\begin{array}{ccc}
\bar{\varphi}&\xrightarrow{\eta(a)}&\bar{\varphi}\\
 \ \ \downarrow{\simeq}&&\ \ \downarrow{\simeq}\\
\psi'&\xrightarrow{\eta'(a)}&\psi'
\end{array}
$$
Therefore, a lifting of $(\underline{\varphi},\underline{\eta})$ over $W/\mu W$ to $(\psi,\tilde{\eta})$ over $W$ can also be viewed as a lifting of $(\bar{\varphi},\eta)$ over $W/\mu^n W$ for $n\geqslant 2$.

It remains to check conditions (a), (b), and (c). For (a), we know that $\psi$ is a rank-$1$ Drinfeld $\mathcal{O}_K$-module over $W$. This implies $\psi|_A$ is a rank-$r$ Drinfeld $A$-module over $W$ with CM by $\mathcal{O}_K$. 
On the other hand, the conditions (b) and (c) are clear from the deformation process described in previous paragraph

\end{proof}

Now $\psi$ is a normalizable rank-$r$ Drinfeld $A$-module over $W$ with CM by $\mathcal{O}_K$. From \cite{D74}, Proposition 5.3 that moduli space $M^1_I$ of isomorphic classes of rank-$1$ Drinfeld-$\mathcal{O}_K$ modules together with a level $I$ structure is a fine moduli space when $0\neq I\subset \mathcal{O}_K$ is an admissible ideal of $\mathcal{O}_K$, i.e. $V(I)$ contains more than one point. Since a fine moduli scheme defined over a local field can be viewed as the base change of the fine moduli scheme defined over a global field. We may say, up to $W$-isomorphism, that $\psi$ is defined globally. From the assumption that $K/F$ is normal, any rank-$r$ Drinfeld module over $\mathbb{C}_\infty$ has CM by $\mathcal{O}_K$ is normalizable (cf. \cite{P23}, Definition 7.5.6). Hence  $[\psi]\in {\rm CM}(\mathcal{O}_K,\iota)$. Therefore, $\psi$ lies in some isomorphism class $[\phi]\in {\rm CM}(\mathcal{O}_K,\iota)$, and recall from Remark \ref{repnw} that the representative $\phi$ is made to be defined over $W$.  Thus we have an isomorphism $\beta:\psi\xrightarrow{\sim} \phi$ defined over $W$ such that $$\tilde{\eta}(a)=\beta^{-1}\circ \phi_{a}\circ\beta \textrm{ for all } a\in\mathcal{O}_K.$$ This implies $\bar{\beta}\equiv \beta \mod \mu^n$ lies in ${\rm Iso}_{W/\mu^n W}(\varphi,\phi)$ and $\eta_{\bar{\beta}}=(\tilde{\eta}\mod \mu^n)=\eta$,  which shows that the map $w\mapsto \eta_w$ is surjective.

On the other hand, if there are equivalence classes $[w_1]$ and $[w_2]$ such that $\eta_{w_1}=\eta_{w_2}$. Then $[w_1]$ and $[w_2]$ both lie in the collection of equivalent classes of  ${\rm Iso}_{W/\mu^nW}(\varphi,\phi)$ for some $[\phi]\in {\rm CM}(\mathcal{O}_K,\iota)$ because of the uniqueness, up to $W$-isomorphism, of CM-lifting that we proved in the claim. Now $w_1w_2^{-1}$ is an isomorphism from $\phi \mod \mu^n$ to itself such that $$w_2w_1^{-1}\circ \bar{\phi}_{a}\circ w_1w_2^{-1}=\bar{\phi}_{a} \textrm{ for all }a\in\mathcal{O}_K.$$
The equality forces $w_1w_2^{-1}=\xi \in\bar{\F}_q^*$ satisfies $\xi^{-1}\bar{\phi}_{a}\xi=\bar{\phi}_{a}$ for all $a\in\mathcal{O}_K$,  so $[w_1]=[w_2]$. Therefore, the map 
$$[w]\mapsto \alpha_{i,w}$$
is injective. This completes the proof of the theorem.
\end{proof}

\begin{rem}
In the  proof of claim in Theorem \ref{thm2}, we view $\varphi \mod \mu$ together with $\underline{\eta}\in S_1$ as a rank-$1$ Drinfeld $\mathcal{O}_K$-module $\underline{\varphi}$. In order to apply Drinfeld's deformation theory, the requirement on ``Drinfeld $\mathcal{O}_K$-module'' is necessary. Indeed, if we wish to find CM-lifting for a rank-$1$ Drinfeld $\mathcal{O}$-module $\underline{\varphi}$ over $W/\mu W$ where $\mathcal{O}$ is an order in $\mathcal{O}_K$. Then we need to find an $W/\mu W$-isogenous Drinfeld $\mathcal{O}_K$ module $\underline{\psi}$ over $W/\mu W$ (here an extra condition on the conductor of $\mathcal{O}\subset \mathcal{O}_K$ is needed), and apply Drinfeld's deformation theory to $\underline{\psi}$. After obtaining a CM-lifting of $\underline{\psi}$ to a  rank-$1$ Drinfeld $\mathcal{O}_K$-module over $W$, we then construct an $W$-isogeny to a rank-$r$ Drinfeld $A$-module $\phi$ with endomorphism ring ${\rm End}(\phi)\supset \mathcal{O}$, and the reduction of $\phi$ modulo $\mu$ is exactly $\varphi \mod \mu$. However, the Drinfeld module $\phi$ is constructed via an isogeny, it is unclear whether we have ${\rm End}(\phi)= \mathcal{O}$. We refer the detail of this process to \cite{CP15}, Theorem 22.

\end{rem}

\begin{cor}\label{sn}
$$\#\sqcup_{[\phi]\in {\rm CM}(\mathcal{O}_K,\iota)}{\rm Iso}_{W/\mu^nW}(\varphi,\phi)=\sum_{[\phi]\in {\rm CM}(\mathcal{O}_K,\iota)}\#{\rm Iso}_{W/\mu^nW}(\phi,\varphi)\geqslant (q-1)\#S_n.$$
\end{cor}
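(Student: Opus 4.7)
The plan is to split the statement into the opening equality and the inequality, and to handle the latter via Theorem~\ref{thm2} together with a straightforward observation about which scalars lie in the centralizer of $\bar\phi(\mathcal{O}_K)$.

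First I would dispose of the equality $\#\sqcup_{[\phi]}{\rm Iso}_{W/\mu^nW}(\varphi,\phi)=\sum_{[\phi]}\#{\rm Iso}_{W/\mu^nW}(\phi,\varphi)$. This is formal: the cardinality of a disjoint union is the sum of cardinalities, and inversion gives a bijection between ${\rm Iso}_{W/\mu^nW}(\varphi,\phi)$ and ${\rm Iso}_{W/\mu^nW}(\phi,\varphi)$ for each class $[\phi]$.

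For the inequality, I would invoke Theorem~\ref{thm2}, which gives a bijection between $S_n$ and the set of $\sim$-equivalence classes in $\sqcup_{[\phi]\in{\rm CM}(\mathcal{O}_K,\iota)}{\rm Iso}_{W/\mu^nW}(\varphi,\phi)$. Consequently it suffices to show that every $\sim$-equivalence class has cardinality at least $q-1$. Fix a representative $w:\varphi\xrightarrow{\sim}\phi$ of such a class over $W/\mu^nW$. Its equivalence class is $\{\xi w:\xi\in G_\phi\}$, where
$$G_\phi:=\{\xi\in\bar{\F}_q^*\mid \xi^{-1}\bar\phi_a\xi=\bar\phi_a\text{ for all }a\in\mathcal{O}_K\},$$
and the map $\xi\mapsto \xi w$ is injective since $w$ is invertible. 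Thus the claim reduces to the purely algebraic statement $|G_\phi|\geqslant q-1$.

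The key (and only nonroutine) step is to observe that $\F_q^*\subseteq G_\phi$. For any $\xi\in\F_q^*$ and any twisted polynomial $\alpha=\sum_i c_i\tau^i\in W/\mu^nW\{\tau\}$, the commutation rule in the twisted polynomial ring gives
$$\xi\alpha=\sum_i \xi c_i\tau^i\quad\text{and}\quad \alpha\xi=\sum_i c_i\xi^{q^i}\tau^i,$$
and since $\xi\in\F_q$ satisfies $\xi^{q^i}=\xi$ for every $i\geqslant 0$, these two expressions coincide. Applying this to each $\alpha=\bar\phi_a$ with $a\in\mathcal{O}_K$ shows $\xi\in G_\phi$, whence $|G_\phi|\geqslant \#\F_q^*=q-1$. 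Combining the bijection of Theorem~\ref{thm2} with this lower bound on each fiber yields
$$\#\!\!\sqcup_{[\phi]\in{\rm CM}(\mathcal{O}_K,\iota)}\!\!{\rm Iso}_{W/\mu^nW}(\varphi,\phi)=\sum_{[w]}|[w]|\geqslant (q-1)\cdot\#S_n,$$
as required. The only conceptual obstacle I anticipate is bookkeeping—making sure the embedding $\F_q^*\hookrightarrow W/\mu^nW$ is the intended one and that $\F_q^*$-multiples of an isomorphism $\varphi\to\phi$ remain isomorphisms over $W/\mu^nW$—but both are immediate from the characteristic-$p$ twisted polynomial formalism.
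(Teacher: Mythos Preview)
Your proof is correct and follows essentially the same approach as the paper: the paper also invokes Theorem~\ref{thm2} to identify $\#S_n$ with the number of $\sim$-classes and then notes that each class $[w]$ contains at least the $q-1$ elements $\xi w$ with $\xi\in\F_q^*$. Your write-up simply supplies the details (the twisted-polynomial commutation showing $\F_q^*\subseteq G_\phi$, and the injectivity of $\xi\mapsto\xi w$) that the paper leaves implicit.
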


\begin{proof}
From Theorem \ref{thm2}, we have 
$$\#S_n=\textrm{ number of equivalence classes of } \sqcup_{[\phi]\in {\rm CM}(\mathcal{O}_K,\iota)}{\rm Iso}_{W/\mu^nW}(\varphi,\phi).$$

On the other hand, for each equivalence class $[w]$, it contains at least those $\xi w$ where $\xi\in \F_q^*$. As total, there are at least $(q-1)\# S_n$ many elements in ${\rm Iso}_{W/\mu^nW}(\phi, \varphi)$. 
\end{proof}

\begin{rem}\label{rk2}
When rank $r=2$, $q$ is odd, and $\mathcal{O}_K=A[s]$ where $s=\sqrt{d}$ for some squarefree $d\in A$. There is only one basic $J$-invariant $j=J^{q+1}=\frac{g_1^{q+1}}{\Delta}$.  The $j$-invariant can distinguish Drinfeld modules up to isomorphism, so Theorem \ref{thm1} can be reduced into the following equality:
 $$\nu(J_{\mathcal{O}_K}^{(q+1)})= \frac{q+1}{q^2-1}\sum_{[\phi]\in{\rm CM}(\mathcal{O}_K,\iota)}\sum_{n\geqslant 1} \#{\rm Iso}_{W/\mu^nW}(\phi, \varphi). $$

Moreover, as stated in arguments after (5.4) of \cite{D91}, there are exactly $q-1$ many $\xi\in\bar{\F}_q^*$ satisfying $\xi^{-1}\bar{\phi}_a\xi=\bar{\phi}_a$ for all $a\in \mathcal{O}_K$. Hence we get
$$\#\sqcup_{[\phi]\in {\rm CM}(\mathcal{O}_K,\iota)}{\rm Iso}_{W/\mu^nW}(\varphi,\phi)=(q-1)\cdot \#S_n.$$
Reducing the valuation back to $\fp$, we have
$${\rm ord}_\fp(J_{\mathcal{O}_K}^{(q+1)})=\frac{1}{e_{K,\fp}}\sum_{n\geqslant 1}\#S_n,$$
where $e_{K,\fp}$ is the ramification index of $\fp$ in $K$. This equality is exactly equation (5.5) in \cite{D91}.
\end{rem}

Next, we make the following intepretation on $\#S_n$:

\begin{prop}\label{algemb}
Define the set $\mathcal{M}_n$ to be
{\center$\Bigg\{\eta:\mathcal{O}_K\hookrightarrow {\rm End}_{W/\mu^nW}(\varphi)\ \bigg|$
$
\begin{array}{cc}
 \eta \textrm{ is an $A$-algebra embedding }, \textrm{ and } \eta|_A=\varphi\mod\mu^n 
 \end{array}
$
$\Bigg\}.$}

Then we have
$$\#S_n\geqslant\frac{1}{\rsep}\# \mathcal{M}_n.$$

\end{prop}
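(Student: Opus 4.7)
The plan is to introduce the action of $G:=\Aut(K/F)$ on $\mathcal{M}_n$ by precomposition and to show that each $G$-orbit meets $S_n$. Since $K/F$ is normal, $|G|=\rsep$, so every orbit has cardinality at most $\rsep$; combined with the orbit-meets-$S_n$ property, this will yield $\#\mathcal{M}_n\leq\rsep\cdot\#S_n$, which is equivalent to the claim.

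First I would verify that $\sigma\cdot\eta:=\eta\circ\sigma^{-1}$ is a well-posed left $G$-action on $\mathcal{M}_n$: every $\sigma\in G$ restricts to the identity on $A$, so $(\eta\circ\sigma^{-1})|_A=\eta|_A=\varphi\bmod\mu^n$, and precomposing an embedding with a ring automorphism remains an embedding.

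The main step is to produce, for each $\eta\in\mathcal{M}_n$, some $\sigma_\eta\in G$ with $\eta\circ\sigma_\eta^{-1}\in S_n$. Apply the CM-lifting carried out in the claim inside the proof of Theorem \ref{thm2} (the deformation-theoretic argument there uses only the rank-$1$ Drinfeld $\mathcal{O}_K$-module structure of $(\varphi\bmod\mu^n,\eta)$ and can be run with respect to any $F$-embedding $\mathcal{O}_K\hookrightarrow W$ that extends $\partial\circ\eta$). This yields a Drinfeld module $\psi$ over $W$ with CM by $\mathcal{O}_K$, $\psi\equiv\varphi\bmod\mu^n$, and an $A$-algebra embedding $\tilde\eta:\mathcal{O}_K\hookrightarrow\mathrm{End}_W(\psi)$ reducing to $\eta$. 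Since $\partial$ is a ring homomorphism on $W\{\tau\}$ and $\partial\circ\psi=\gamma:A\hookrightarrow W$ is injective (as $W$ has $A$-characteristic zero), the kernel of $\partial\circ\tilde\eta:\mathcal{O}_K\to W$ is a prime of $\mathcal{O}_K$ contracting to $(0)\subset A$; integrality of $\mathcal{O}_K$ over $A$ then forces this kernel to be $0$. Hence $\partial\circ\tilde\eta$ is injective and extends to an $F$-algebra embedding $K\hookrightarrow\mathrm{Frac}(W)=\check{H}_{K,\mathfrak{P}}^{nr}$. By normality of $K/F$, all $F$-embeddings of $K$ into an algebraic closure of $F$ share the same image and differ by a unique element of $G$, so $\partial\circ\tilde\eta=\iota\circ\sigma_\eta$ for some $\sigma_\eta\in G$. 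Reducing modulo $\mu^n$ gives $\partial\circ(\eta\circ\sigma_\eta^{-1})\equiv\iota\bmod\mu^n$, i.e.\ $\sigma_\eta\cdot\eta\in S_n$.

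Putting the pieces together, $\mathcal{M}_n$ is partitioned into $G$-orbits, each of cardinality at most $|G|=\rsep$ and each containing at least one element of $S_n$; hence $\#\mathcal{M}_n\leq\rsep\cdot\#S_n$, which is the desired inequality. The main obstacle is precisely the CM-lifting step: one has to adapt the claim from the proof of Theorem \ref{thm2} so that, starting from a general $\eta\in\mathcal{M}_n$ rather than $\eta\in S_n$, the deformation can still be carried out (possibly with respect to a Galois-conjugate $F$-embedding of $\mathcal{O}_K$ into $W$), and then to check that the resulting embedding $\partial\circ\tilde\eta$ of $K$ into $\mathrm{Frac}(W)$ lies in the $G$-orbit of $\iota$ and reduces back to $\partial\circ\eta$ mod $\mu^n$.
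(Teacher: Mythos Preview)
Your approach is genuinely different from the paper's, and the difference is worth spelling out. The paper does \emph{not} act by $\Aut(K/F)$ on the source; instead it uses the local group $G_\fp:=\Aut(\check H_{K,\mathfrak P}^{nr}/F_\fp)$ acting on the \emph{target} $W/\mu^nW\{\tau\}$ coefficient-wise, i.e.\ $\eta^\sigma(a)=\sum_i\alpha_i^\sigma\tau^i$ if $\eta(a)=\sum_i\alpha_i\tau^i$. Since $\varphi$ has coefficients in $A\subset F_\fp$, this action preserves the condition $\eta|_A=\varphi$, and since $\sigma$ commutes with $q$-th powers it preserves the twisted multiplication; hence $\eta^\sigma\in\mathcal M_n$. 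The paper then partitions $\mathcal M_n$ into the fibers of $\eta\mapsto(\partial\circ\eta\bmod\mu^n)$, lifts $\partial\circ\eta$ to an embedding $\widehat{\partial\circ\eta}:\mathcal O_K\hookrightarrow W$, chooses $\sigma\in G_\fp$ with $(\widehat{\partial\circ\eta})^\sigma=\iota$, and uses $g\mapsto g^\sigma$ as a bijection from each fiber onto $S_n$. This avoids orbit-counting entirely and never appeals to the CM-lifting of Theorem~\ref{thm2}.

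Your route via precomposition is in principle fine, but the justification you give for ``each orbit meets $S_n$'' is circular as written. To run the CM-lifting for an arbitrary $\eta\in\mathcal M_n$ you must view $(\bar\varphi,\eta)$ as a rank-$1$ Drinfeld $\mathcal O_K$-module over $W/\mu^nW$, and for that $W/\mu^nW$ has to carry an $\mathcal O_K$-algebra structure $\gamma$ satisfying $\gamma\equiv\partial\circ\eta\bmod\mu^n$ (this is the defining compatibility $\partial\circ\psi=\gamma$ for a Drinfeld module). The only available $\gamma$'s are the maps $\iota\circ\sigma$ for $\sigma\in\Aut(K/F)$, so you are assuming exactly the conclusion $\partial\circ\eta\equiv\iota\circ\sigma\bmod\mu^n$ that you want. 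Choosing $\gamma$ that only agrees with $\partial\circ\eta$ modulo $\mu$ lets the deformation run, but then the uniqueness argument over $W/\mu^nW$ compares two lifts living over \emph{different} $\mathcal O_K$-algebra structures, so you cannot conclude $\tilde\eta\equiv\eta\bmod\mu^n$. The clean fix is to drop CM-lifting and argue as the paper does: lift $\partial\circ\eta$ directly to an $A$-algebra map $\mathcal O_K\to W$ (which, by normality of $K/F$, must be $\iota\circ\sigma$ for some $\sigma$) and then either twist by $\sigma$ on the source as you propose, or by the corresponding $\sigma\in G_\fp$ on the target as the paper does.
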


\begin{proof}
Given $\eta\in \mathcal{M}_n$, the composition $\partial\circ \eta:\mathcal{O}_K\rightarrow W/\mu^nW$ induces an $A$-algebra embedding into $W/\mu^nW$. Since $K/F$ is normal, the field extension  $H_{K,\mathfrak{P}}^{nr}/F_\fp$ is normal. For simplicity, we denote $G_\fp:={\rm Aut}(H_{K,\mathfrak{P}}^{nr}/F_\fp)$. We define an action of $G_\fp$ on $W/\mu^n W$ by $$\bar{a}^\sigma=\overline{a^\sigma} \textrm{ for }a\in W, \textrm{ and } \sigma\in G_\fp.$$
The $G_\fp$-action on $W/\mu^nW$ is well-defined since the action of automorphism group preserves valuation $|\cdot|_\fp$. Moreover, for $\eta(a)=\alpha_0+\alpha_1\tau_1+\cdots+\alpha_d\tau^d \in W/\mu^n W\{\tau\}$ where $a\in\mathcal{O}_K$, define $G_\fp$-action on $W/\mu^nW\{\tau\}$ by $\eta^\sigma(a):=\alpha_0^\sigma+\alpha_1^\sigma\tau+\cdots+\alpha_d^\sigma\tau^d$. 

For an $A$-algebra embedding $\partial\circ \eta:\mathcal{O}_K\rightarrow W/\mu^nW$, we lift to an $A$-algebra embedding $\widehat{\partial\circ \eta}:\mathcal{O}_K\rightarrow W$. There is an element $\sigma\in G_\fp$ such that $(\widehat{\partial\circ \eta})^{\sigma}=\iota.$ Then we can deduce $$(\partial\circ \eta)^{\sigma}=\iota \mod \mu^n.$$

 Now we separate $\mathcal{M}_n$ into disjoint subsets $\{\eta\in\mathcal{M}_n\mid \partial\circ \eta=\zeta\mod \mu^n\}$, each collects $\eta\in\mathcal{M}_n$ with the same embedding $\partial\circ \eta$, here $\zeta:\mathcal{O}_K\rightarrow W$ is an $A$-algebra embedding from $\mathcal{O}_K$ to $W$. There are at most $\rsep=[K:F]_{\rm sep}$ many such subsets. For any subset $$\{g\in \mathcal{M}_n\mid \partial\circ g=\zeta\mod \mu^n\}\neq\{f\in \mathcal{M}_n\mid \partial\circ f=\iota\mod \mu^n\},$$
one can construct an element $\sigma\in G_\fp$ from the previous paragraph such that 
$$(\partial\circ g)^\sigma=\iota\mod\mu^n.$$
Hence we can construct an injective map  
$$
\begin{array}{cc}
\sigma:&\{g\in \mathcal{M}_n\mid \partial\circ g=\zeta\mod \mu^n\} \longrightarrow \{f\in \mathcal{M}_n\mid \partial\circ f=\iota\mod \mu^n\}\\
& g\mapsto g^\sigma
\end{array}
$$
Also, $\sigma^{-1}$ induces an injective map from $\{f\in \mathcal{M}_n\mid \partial\circ f=\iota\mod \mu^n\}$ to $\{g\in \mathcal{M}_n\mid \partial\circ g=\zeta\mod \mu^n\}$. This proves that all such subsets have the same cardinality.

Thus we have
$$\#S_n=\#\{f\in \mathcal{M}_n\mid \partial\circ f=\iota \mod \mu^n\}\geqslant\frac{1}{\rsep}\#\mathcal{M}_n.$$

\end{proof}

\begin{rem}\label{snequal}
From the proof of Proposition \ref{algemb}(ii), we can say more explicitly that

$$S_n=\frac{\# \M_n}{\# \textrm{ of disjoint subsets }\{f\in\mathcal{M}_n\mid \partial\circ f=\zeta\mod \mu^n\} \textrm{ in }\M_n}$$
\end{rem}

Now we are in the position to state and prove our main result:

\begin{thm}\label{main1}
Let $r\geqslant 2$ be an integer, and $q=p^e$ be a prime power. Let $A=\F_q[T]$ be the polynomial ring, and $F=\F_q(T)$ be its fractional field. Let $K/F$ be an imaginary extension of degree $r$, with $\rsep=[K:F]_{\rm sep}$ defined to be its separable degree, and the integral closure of $A$ in $K$ is denoted by  $\mathcal{O}_K$. Furthermore, Assume $K$ 
is a normal extension over $F$. Fix a prime ideal $\fp$ of $A$. Then for any basic $J$-invariant $J^{(\delta_1,\cdots,\delta_{r-1})}$, we have

$$
\begin{array}{ccc}
{\rm{ord}}_\fp(J_{\mathcal{O}_K}^{(\delta_1,\cdots,\delta_{r-1})})&:=&{\rm{ord}}_\fp\left(\prod_{[\phi]\in {\rm CM}(\mathcal{O}_K,\iota)}J^{(\delta_1,\cdots,\delta_{r-1})}(\phi)\right)\nonumber\\
\ \\ 
&\geqslant& \frac{(\sum_{i=1}^{r-1}\delta_i)(q-1)}{\rsep\cdot(q^r-1)\cdot e_{K,\fp}}\sum_{n\geqslant 1} \#\mathcal{M}_n \\
\end{array}
.$$
Here $e_{K,\fp}$ is the ramification index of $\fp$ in $K$, and $ \mathcal{M}_n$ is the set of $A$-algebra embeddings $\eta:\mathcal{O}_K\hookrightarrow {\rm End}_{W/\mu^nW}(\varphi)$ such that $\eta|_A=\varphi\mod \mu^n$,  where $\varphi$ is the Drinfeld module $\varphi_T=T+\tau^r$.

\end{thm}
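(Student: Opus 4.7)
The plan is to chain together the three intermediate results already established in the section (Theorem \ref{thm1}, Corollary \ref{sn}, and Proposition \ref{algemb}) and then translate the valuation $\nu$ on $W$ back to the $\fp$-adic ordinal on $A$. Concretely, I would start from the inequality in Theorem \ref{thm1},
\[
\nu\bigl(J_{\mathcal{O}_K}^{(\delta_1,\ldots,\delta_{r-1})}\bigr)\geqslant \frac{\sum_{i=1}^{r-1}\delta_i}{q^r-1}\sum_{[\phi]\in {\rm CM}(\mathcal{O}_K,\iota)}\sum_{n\geqslant 1}\#{\rm Iso}_{W/\mu^n W}(\phi,\varphi),
\]
then interchange the two sums and apply Corollary \ref{sn} term-by-term in $n$ to replace the inner sum over isomorphism classes with $(q-1)\#S_n$. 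Finally I apply Proposition \ref{algemb} to bound $\#S_n$ below by $\frac{1}{\rsep}\#\mathcal{M}_n$.

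The remaining step is to convert $\nu$ into ${\rm ord}_\fp$. By construction in Remark \ref{repnw}, $W$ is the completion of the maximal unramified extension of the localization $H_{K,\mathfrak{P}}$, and $H_K/K$ is everywhere unramified at finite primes (it is the Hilbert class field of $K$); hence passing from $F_\fp$ to the fraction field of $W$ multiplies valuations exactly by $e_{K,\fp}$. In other words $\nu(\pi)=e_{K,\fp}$ for a uniformizer $\pi$ of $\fp$, so ${\rm ord}_\fp(x)=\nu(x)/e_{K,\fp}$ for any $x\in F^\times$. Dividing the chain of inequalities above by $e_{K,\fp}$ yields exactly the bound claimed in the theorem.

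No new argument is really required — the only care needed is bookkeeping: checking that the class-by-class inequalities in Lemma \ref{Jest} can be summed over $[\phi]\in {\rm CM}(\mathcal{O}_K,\iota)$ before or after the sum over $n$, and that the ramification factor $e_{K,\fp}$ is indeed the correct scaling when reducing $\nu$ to ${\rm ord}_\fp$. I expect the mildly delicate point, if any, to be the unramifiedness of $H_K/K$ at finite primes, which justifies using $e_{K,\fp}$ rather than $e_{H_K,\mathfrak{P}}$; this follows from the definition of $H_K$ as the maximal abelian extension of $K$ unramified at every finite place with $\tilde\infty$ split. Once this is noted, the proof is a direct assembly and takes only a few lines.
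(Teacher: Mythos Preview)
Your proposal is correct and is exactly the paper's own argument: the paper's proof consists of the single sentence ``Immediate from Theorem \ref{thm1}, Corollary \ref{sn}, Proposition \ref{algemb}, and the fact that ${\rm ord}_\fp(J_{\mathcal{O}_K}^{(\delta_1,\cdots,\delta_{r-1})})=\frac{1}{e_{K,\fp}}\nu(J_{\mathcal{O}_K}^{(\delta_1,\cdots,\delta_{r-1})})$,'' which is precisely the chain of inequalities and the valuation conversion you describe. Your remark that $H_K/K$ is unramified at finite primes, so that $e_{K,\fp}$ is the correct scaling, is the justification the paper leaves implicit.
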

\begin{proof}
Immediate from Theorem \ref{thm1}, Corollary \ref{sn}, Proposition \ref{algemb}, and the fact that $${\rm{ord}}_\fp(J_{\mathcal{O}_K}^{(\delta_1,\cdots,\delta_{r-1})})=\frac{1}{e_{K,\fp}}\cdot \nu(J_{\mathcal{O}_K}^{(\delta_1,\cdots,\delta_{r-1})}).$$
\end{proof}

\begin{rem}
From condition (1) and (2) in the definition of $J^{(\delta_1,\cdots, \delta_{r-1})}$, we can deduce that $\sum_{i=1}^{r-1}\delta_i\geqslant q+1$. Hence the inequality in Theorem \ref{main1} can be further modified into 
$${\rm{ord}}_\fp(J_{\mathcal{O}_K}^{(\delta_1,\cdots,\delta_{r-1})})\geqslant \frac{(q+1)(q-1)}{\rsep\cdot(q^r-1)\cdot e_{K,\fp}}\sum_{n\geqslant 1} \#\mathcal{M}_n.$$

\end{rem}

\begin{rem}

From here to the end of section 3, we generalize Theorem \ref{main1} to the valuation

$${\rm ord}_\fp\left(\prod_{[\phi]\in{\rm CM}(\mathcal{O}_K,\iota)}J^{(\delta_1,\cdots,\delta_{r-1})}(\phi)-J^{(\delta_1,\cdots,\delta_{r-1})}(\phi')\right).$$
Here $\phi'$ is a fixed Drinfeld module of rank $r$ defined over a normal extension $K'/F$ that has good reduction at a place $\fq'$ of $K'$ stands above $\fp$.

\end{rem}

\begin{term}\label{phi'}
\begin{enumerate}
\item[$\bullet$] 
Let $r\geqslant 2$ be an integer, and $q=p^e$ be a prime power coprime to $r$. Let $A=\F_q[T]$ be the polynomial ring, and $F=\F_q(T)$ be its fractional field. Let $K/F$ be a finite extension of degree $r$. Furthermore, Assume $K$ 
is an imaginary and normal extension over $F$, with ring of integers $\mathcal{O}_K$.
\item[$\bullet$]
For each isomorphism class $[\phi]\in {\rm CM}(\mathcal{O}_K,\iota)$, we may choose a  representative $\phi$ is defined over $H_K$. Take the compositum field $L:=H_K\cdot K'$.  Furthermore, We fix a place $\mathfrak{P}$ of $L$ above $\mathfrak{p}$ of $F$. And consider the local field $L_{\mathfrak{P}}$ at $\mathfrak{P}$. Let $L_{\mathfrak{P}}^{nr}$ be the maximal unramified extension of $L_{\mathfrak{P}}$. We set ${\check{L}_{\mathfrak{P}}^{nr}}$ to be the completion of $L_{\mathfrak{P}}^{nr}$, and its discrete valuation ring is denoted by $W$. Let $\mu$ be a fixed uniformizer of $W$ with normalized valuation $\nu$. Therefore, up to isomorphisms, we may assume the representative $\phi$ of $[\phi]\in {\rm CM}(\mathcal{O}_K,\iota)$ is defined over $W$. 
\end{enumerate}
\end{term}

Recall from Proposition \ref{genjest}  that we have

$$\nu(J^{(\delta_1,\cdots,\delta_{r-1})}(\phi)-J^{(\delta_1,\cdots,\delta_{r-1})}(\phi'))\geqslant \frac{1}{q^r-1}\sum_{n\geqslant 1} \#{\rm Iso}_{W/\mu^nW}(\phi, \phi').$$

Hence we can directly deduce that
\begin{thm}\label{thm1'}
$$
\nu\left(\prod_{[\phi]\in{\rm CM}(\mathcal{O}_K,\iota)}J^{(\delta_1,\cdots,\delta_{r-1})}(\phi)-J^{(\delta_1,\cdots,\delta_{r-1})}(\phi')\right)\\ \geqslant \frac{1}{q^r-1}\sum_{[\phi]\in{\rm CM}(\mathcal{O}_K,\iota)}\sum_{n\geqslant 1} \#{\rm Iso}_{W/\mu^nW}(\phi, \phi').$$
\end{thm}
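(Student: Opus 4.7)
The plan is to deduce the theorem directly from Proposition \ref{genjest} by distributing the valuation $\nu$ across the product. Since $\nu$ is a discrete valuation on the fraction field of $W$ (with the convention $\nu(0)=+\infty$ handling the case $\phi\cong\phi'$ over $W$), one has
\begin{align*}
\nu\!\left(\prod_{[\phi]\in{\rm CM}(\mathcal{O}_K,\iota)}\!\left(J^{(\delta_1,\cdots,\delta_{r-1})}(\phi)-J^{(\delta_1,\cdots,\delta_{r-1})}(\phi')\right)\right) = \sum_{[\phi]\in{\rm CM}(\mathcal{O}_K,\iota)}\nu\!\left(J^{(\delta_1,\cdots,\delta_{r-1})}(\phi)-J^{(\delta_1,\cdots,\delta_{r-1})}(\phi')\right),
\end{align*}
where the outer sum is finite because $\#{\rm CM}(\mathcal{O}_K,\iota)=h_K$.

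Next I would verify the hypotheses of Proposition \ref{genjest} for each pair $(\phi,\phi')$. By Terminology \ref{phi'}, every representative $\phi$ of $[\phi]\in{\rm CM}(\mathcal{O}_K,\iota)$ is already chosen to lie over $W$; since CM Drinfeld modules acquire everywhere good reduction after a suitable base change, and $\check{L}_{\mathfrak{P}}^{nr}$ was constructed precisely to realize this, $\phi$ has good reduction over $W$. The fixed Drinfeld module $\phi'$ has good reduction at $\fq'$ of $K'$ above $\fp$ by assumption, hence also over $W$ after base change through the compositum $L=H_K\cdot K'$. Finally, the residue field $W/\mu W$ is algebraically closed by construction. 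Thus all hypotheses of Proposition \ref{genjest} are met.

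Applying Proposition \ref{genjest} to each summand then yields
\begin{align*}
\nu\!\left(J^{(\delta_1,\cdots,\delta_{r-1})}(\phi)-J^{(\delta_1,\cdots,\delta_{r-1})}(\phi')\right) \geqslant \frac{1}{q^r-1}\sum_{n\geqslant 1}\#{\rm Iso}_{W/\mu^n W}(\phi,\phi'),
\end{align*}
and summing these inequalities over $[\phi]\in{\rm CM}(\mathcal{O}_K,\iota)$ produces exactly the claimed bound. There is essentially no genuine obstacle here: the substantive work—namely, tracking how the coefficients $g_i$ and $g_i'$ of $\phi_T$ and $\phi'_T$ contribute to the $\mu$-adic valuation of the difference of $J$-invariants in the two cases $g_1^{\delta_1}\cdots g_{r-1}^{\delta_{r-1}}=0$ and $\neq 0$—has already been handled inside Proposition \ref{genjest}. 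Theorem \ref{thm1'} is merely its additive extension across the finite set of CM isomorphism classes, so the only point requiring care is the verification of good reduction for the representatives $\phi$, which follows from CM theory together with the choice of base $W$.
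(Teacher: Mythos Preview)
Your proposal is correct and follows exactly the paper's approach: the paper simply recalls Proposition~\ref{genjest} and says ``Hence we can directly deduce that'' before stating Theorem~\ref{thm1'}, so the intended argument is precisely the additivity of $\nu$ over the finite product followed by an application of Proposition~\ref{genjest} to each factor. Your write-up is in fact more detailed than the paper's, since you explicitly check the good-reduction and algebraically-closed-residue-field hypotheses.
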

Going through the same proof of Theorem \ref{thm2} and Proposition \ref{algemb} respectively, while replacing $\varphi$ by $\phi'$ and use our new terminology of $W$, we have

\begin{defi}
Given $w_1, w_2 \in {\rm Iso}_{W/\mu^nW}(\phi',\phi)$, we define $w_1\sim w_2$ if and only if there is some $\xi\in\bar{\F}_{q}^*$ such that $w_2=\xi w_1$ and $$\xi^{-1}\bar{\phi}_{a}\xi=\bar{\phi}_{a} \textrm{ for } a\in \mathcal{O}_K.$$

 It is clear that $\sim$ is an equivalence relation. Hence we can divide the disjoint union $\sqcup_{[\phi]\in {\rm CM}(\mathcal{O}_K,\iota)}{\rm Iso}_{W/\mu^nW}(\phi',\phi)$ into equivalence classes with respect to $\sim$.
\end{defi}

\begin{thm}\label{thm2'}
There is a one-to-one correspondence between equivalence classes of $$\sqcup_{[\phi]\in {\rm CM}(\mathcal{O}_K,\iota)}{\rm Iso}_{W/\mu^nW}(\phi',\phi) \textrm{ with respect to }\sim,$$ and the set  
{\center$S_n:=$ $\Bigg\{\eta:\mathcal{O}_K\hookrightarrow {\rm End}_{W/\mu^nW}(\phi') \ \bigg|$
$
\begin{array}{cc}
 \eta \textrm{ is an $A$-algebra embedding},  \\
 \newline \eta|_A=\phi'\mod\mu^n, \textrm{and } \partial\circ\eta\equiv \iota \mod \mu^n  \\
\end{array}
$
$\Bigg\}.$}
\end{thm}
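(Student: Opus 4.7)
The plan is to mirror the proof of Theorem \ref{thm2} essentially verbatim, replacing $\varphi$ by the fixed Drinfeld module $\phi'$ and using the larger valuation ring $W$ from Terminology \ref{phi'}. The construction of $W$ as the completion of the maximal unramified extension of $L_\mathfrak{P}$, where $L = H_K \cdot K'$, is set up precisely so that a representative $\phi$ of every $[\phi] \in {\rm CM}(\mathcal{O}_K,\iota)$ and the fixed $\phi'$ can both be taken to be defined over $W$, while the residue field $W/\mu W$ remains algebraically closed, keeping Hensel's lemma and the formal-module lifting theory available.

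First I would define the candidate map $[w]\mapsto \eta_w$ by
$$\eta_w := w^{-1}\circ \bar\phi \circ w \in {\rm End}_{W/\mu^nW}(\phi'),$$
where $\bar\phi$ denotes the reduction modulo $\mu^n$ of the $\mathcal{O}_K$-action on $\phi$ coming from CM. Routine checks verify that $\eta_w$ lies in $S_n$: the identity $\eta_w|_A = \phi' \mod \mu^n$ holds because $w$ is in particular an isogeny of Drinfeld $A$-modules, and $\partial \circ \eta_w = \partial \circ \bar\phi \equiv \iota \mod \mu^n$ follows from the normalizability of $\phi$. Well-definedness on equivalence classes is a one-line calculation: if $w_2 = \xi w_1$ with $\xi^{-1}\bar\phi_a\xi = \bar\phi_a$ for all $a\in\mathcal{O}_K$, then $\eta_{w_2} = \eta_{w_1}$.

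The main content lies in surjectivity, established exactly as in the claim embedded in the proof of Theorem \ref{thm2}. Given $\eta \in S_n$, I would view $\phi' \mod \mu^n$ as a rank-$1$ Drinfeld $\mathcal{O}_K$-module via the extended action $a \mapsto \eta(a)$, reduce further modulo $\mu$, and then invoke the Drinfeld-module analogue of Serre--Tate (\cite{D74}, Prop.~5.4) together with the existence and uniqueness up to isomorphism of lifts of height-$1$ formal $\mathcal{O}_\fq$-modules (\cite{D74}, Prop.~4.2). This yields a rank-$1$ Drinfeld $\mathcal{O}_K$-module $(\psi,\tilde\eta)$ over $W$ lifting $(\phi' \mod \mu^n,\eta)$ and satisfying $\psi \equiv \phi' \mod \mu^n$. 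Normality of $K/F$ forces $\psi$ to be normalizable, so $[\psi] = [\phi]$ for a chosen representative $\phi$ defined over $W$; the resulting isomorphism $\beta:\psi \xrightarrow{\sim}\phi$ over $W$ reduces to a $w \in {\rm Iso}_{W/\mu^nW}(\phi',\phi)$ with $\eta_w = \eta$. Injectivity follows from the same uniqueness: if $\eta_{w_1} = \eta_{w_2}$, the uniqueness of CM-lifting forces the target classes to coincide, after which $w_1 w_2^{-1}$ is an $\mathcal{O}_K$-equivariant automorphism of $\phi \mod \mu^n$, hence a scalar $\xi \in \bar\F_q^*$ of the required type.

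The only place this genuinely differs from Theorem \ref{thm2} is in checking that the deformation-theoretic argument in its claim never used the specific shape $\varphi_T = T + \tau^r$; inspecting that proof confirms it uses only that one is working with a rank-$r$ Drinfeld $A$-module over $W/\mu^n W$ carrying a compatible $\mathcal{O}_K$-action, together with algebraic closedness of the residue field. Both of these hold in our setting, so the argument transports intact with $\phi'$ in place of $\varphi$ and with the $W$ of Terminology \ref{phi'}. I therefore expect no substantive new obstacle; the bookkeeping around base changes between $H_K$-representatives, the compositum $L$, and the uniformizer $\mu$ of $W$ is the only point requiring care.
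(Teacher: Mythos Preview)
Your proposal is correct and matches the paper's own treatment exactly: the paper does not give a separate proof of Theorem \ref{thm2'} but simply states that one goes through the proof of Theorem \ref{thm2} verbatim, replacing $\varphi$ by $\phi'$ and using the $W$ of Terminology \ref{phi'}. Your observation that the deformation-theoretic claim in Theorem \ref{thm2} nowhere uses the special form $\varphi_T=T+\tau^r$ is precisely the point that makes this transport legitimate.
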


\begin{cor}\label{sn'}
$$\#\sqcup_{[\phi]\in {\rm CM}(\mathcal{O}_K,\iota)}{\rm Iso}_{W/\mu^nW}(\phi',\phi)=\sum_{[\phi]\in {\rm CM}(\mathcal{O}_K,\iota)}\#{\rm Iso}_{W/\mu^nW}(\phi,\phi')\geqslant (q-1)\#S_n.$$
\end{cor}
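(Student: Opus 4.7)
The plan is to mirror the proof of Corollary \ref{sn} essentially verbatim, with $\varphi$ replaced by $\phi'$ throughout. The first equality is not substantive: since the sets ${\rm Iso}_{W/\mu^nW}(\phi',\phi)$ indexed by distinct classes $[\phi]\in {\rm CM}(\mathcal{O}_K,\iota)$ are disjoint (they involve different targets), the cardinality of the disjoint union is the sum of the individual cardinalities, and each $\#{\rm Iso}_{W/\mu^nW}(\phi',\phi)$ equals $\#{\rm Iso}_{W/\mu^nW}(\phi,\phi')$ via the bijection $w\mapsto w^{-1}$.

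For the inequality, I would apply Theorem \ref{thm2'}, which establishes a bijection between $S_n$ and the set of $\sim$-equivalence classes of $\sqcup_{[\phi]\in {\rm CM}(\mathcal{O}_K,\iota)}{\rm Iso}_{W/\mu^nW}(\phi',\phi)$. It therefore suffices to show that every such equivalence class contains at least $q-1$ elements. Given $w$ in the disjoint union and any $\xi\in \F_q^*\subset \bar{\F}_q^*$, the element $\xi w$ again lies in the same ${\rm Iso}_{W/\mu^nW}(\phi',\phi)$; moreover, writing $\bar{\phi}_a=\sum c_i\tau^i$, one has
$$\xi^{-1}\bar{\phi}_a\xi=\sum c_i\, \xi^{q^i-1}\tau^i=\bar{\phi}_a,$$
because $q-1\mid q^i-1$ forces $\xi^{q^i-1}=1$ for all $i\geqslant 0$. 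Hence $\xi w\sim w$, and the map $\xi\mapsto \xi w$ embeds $\F_q^*$ into the equivalence class of $w$, showing the class has cardinality at least $q-1$. Summing over the $\#S_n$ classes yields the stated inequality.

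There is no real obstacle here; the conceptual work was done in proving Theorem \ref{thm2'} and the normalization set-up of Terminology \ref{phi'}. The only point worth double-checking is that the action of $\F_q^*$ by left multiplication preserves the fiber over a given $[\phi]$ (which is clear, as it does not alter the target), so that the count of $(q-1)$ elements per class is compatible with the disjoint-union decomposition on the left-hand side.
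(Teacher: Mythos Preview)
Your proposal is correct and follows essentially the same approach as the paper: the paper derives Corollary~\ref{sn'} by declaring that the proof of Corollary~\ref{sn} carries over verbatim with $\varphi$ replaced by $\phi'$, and that proof is precisely the argument you give---use Theorem~\ref{thm2'} to identify $\#S_n$ with the number of $\sim$-classes, then observe that each class contains the $(q-1)$ elements $\xi w$ for $\xi\in\F_q^*$. Your added verification that $\xi^{-1}\bar{\phi}_a\xi=\bar{\phi}_a$ for $\xi\in\F_q^*$ and your remark on the bijection $w\mapsto w^{-1}$ make explicit details the paper leaves implicit, but the substance is identical.
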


\begin{prop}\label{algemb'}
Define the set $\mathcal{M}_n$ to be
{\center$\Bigg\{\eta:\mathcal{O}_K\hookrightarrow {\rm End}_{W/\mu^nW}(\phi')\ \bigg|$
$
\begin{array}{cc}
 \eta \textrm{ is an $A$-algebra embedding }, \textrm{ and } \eta|_A=\phi'\mod\mu^n 
 \end{array}
$
$\Bigg\}.$}

Then we have
$$\#S_n\geqslant\frac{1}{\rsep}\# \mathcal{M}_n.$$

\end{prop}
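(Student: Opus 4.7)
The plan is to adapt the proof of Proposition \ref{algemb} essentially verbatim, with the only delicate point being the choice of the automorphism group so that it fixes the Drinfeld module $\phi'$ (whose coefficients now live in $K'$ rather than in $\F_q$).

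First, given $\eta \in \mathcal{M}_n$, the composition $\partial \circ \eta : \mathcal{O}_K \to W/\mu^n W$ is an $A$-algebra embedding of $\mathcal{O}_K$ into $W/\mu^n W$. I would lift this to an $A$-algebra embedding $\widehat{\partial \circ \eta}: \mathcal{O}_K \to W$; since $W$ contains $H_K$, such a lift exists, and the number of possible such maps into $W$ is $\rsep = [K:F]_{\rm sep}$. So the set $\mathcal{M}_n$ is partitioned into at most $\rsep$ disjoint subsets indexed by the $A$-algebra embeddings $\zeta : \mathcal{O}_K \to W$, namely
\[
\mathcal{M}_n^\zeta := \{\, \eta \in \mathcal{M}_n \mid \partial \circ \eta \equiv \zeta \bmod \mu^n \,\},
\]
and $S_n$ is exactly $\mathcal{M}_n^\iota$.

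The key step is to produce, for any two such embeddings $\zeta, \zeta' : \mathcal{O}_K \to W$, an automorphism $\sigma$ with $\zeta'^{\,\sigma} = \zeta$ \emph{and} such that the induced action $\eta \mapsto \eta^\sigma$ on $W/\mu^n W\{\tau\}$ preserves $\mathcal{M}_n$, i.e.\ fixes $\phi'$. This is where one must take $G_\fp$ to be the subgroup of $\Aut(L_{\mathfrak{P}}^{nr}/F_\fp)$ fixing the coefficients of $\phi'$ — concretely, $G_\fp := \Aut(L_{\mathfrak{P}}^{nr}/K'_{\fq',{\rm nr}})$, where $K'_{\fq',{\rm nr}}$ is the maximal unramified extension of $K'_{\fq'}$ inside $L_{\mathfrak{P}}^{nr}$. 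Since $K/F$ is normal and $L = H_K \cdot K'$, the restriction map $G_\fp \to \Gal(K/K \cap K')$ is surjective; combined with the embeddings of $K \cap K'$ being fixed after restriction to $W \supset K'$, this gives the transitivity we need on the set of $A$-embeddings $\mathcal{O}_K \hookrightarrow W$ (this is the main obstacle, as one must verify the transitivity using the normality of $K/F$ and the fact that $G_\fp$ fixes $K'$ pointwise). Because $\sigma \in G_\fp$ fixes $\phi'$, the map $\eta \mapsto \eta^\sigma$ sends $\mathcal{M}_n^{\zeta'}$ bijectively onto $\mathcal{M}_n^\zeta$, with inverse provided by $\sigma^{-1}$.

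Consequently every nonempty subset $\mathcal{M}_n^\zeta$ has the same cardinality as $\mathcal{M}_n^\iota = S_n$. Summing over the (at most $\rsep$) pieces of the partition gives
\[
\# \mathcal{M}_n = \sum_\zeta \# \mathcal{M}_n^\zeta \leqslant \rsep \cdot \# S_n,
\]
which is the required inequality $\# S_n \geqslant \frac{1}{\rsep} \# \mathcal{M}_n$. The main obstacle, as flagged above, is justifying the transitivity of the $G_\fp$-action on $A$-embeddings once one restricts $G_\fp$ to automorphisms fixing $K'$; everything else is a word-for-word transcription of the proof of Proposition \ref{algemb}.
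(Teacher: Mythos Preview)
Your approach is the paper's: the paper simply instructs one to repeat the proof of Proposition \ref{algemb} verbatim with $\phi'$ in place of $\varphi$ and the new $W$ of Terminology \ref{phi'}, noting only that normality of $K'/F$ is needed so that $W/F_\fp$ (equivalently $L_{\mathfrak P}^{nr}/F_\fp$) is normal. You have in fact gone beyond the paper by flagging a point it does not address: for the map $\eta\mapsto\eta^\sigma$ to land back in $\mathcal{M}_n$ one needs $(\phi')^\sigma=\phi'$, which in the original proof held automatically because $\varphi_T=T+\tau^r$ has coefficients in $F$. Your proposed remedy --- restrict $G_\fp$ to automorphisms fixing $K'$ --- is the natural one.

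The one place your write-up is not yet complete is exactly where you say it is: the transitivity claim after restriction. You assert that every $A$-embedding $\mathcal{O}_K\hookrightarrow W$ agrees on $K\cap K'$ with the inclusion coming from $K'\subset W$, and hence that it suffices to realise elements of $\Gal(K/K\cap K')$. But there is no a priori reason two $A$-embeddings $\mathcal{O}_K\hookrightarrow W$ must coincide on $\mathcal{O}_{K\cap K'}$; in particular $\partial\circ\eta$ need not restrict on $K\cap K'$ to the canonical inclusion. So either one must show that the relevant embeddings (those arising as $\partial\circ\eta$ for $\eta\in\mathcal{M}_n$) do satisfy such a constraint, or one must argue transitivity differently. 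The paper itself does not supply this step, so your identification of it as the main obstacle is apt; it just is not yet discharged.
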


Note that in the proof of Proposition \ref{algemb'}, we need that $W/F_\fp$ is a normal extension. Thus it is necessary that $K'/F$ is normal. 

Combining the Theorem \ref{thm1'}, Corollary \ref{sn'}, and Proposition \ref{algemb'} we get

\begin{thm}\label{main1'}
Let $r\geqslant 2$ be an integer, and $q=p^e$ be a prime power. Let $A=\F_q[T]$ be the polynomial ring, and $F=\F_q(T)$ be its fractional field. Let $K/F$ be an imaginary extension of degree $r$, with $\rsep=[K:F]_{\rm sep}$ defined to be its separable degree, and the integral closure of $A$ in $K$ is denoted by  $\mathcal{O}_K$. Furthermore, Assume $K$ is a normal extension over $F$. 

Fix a prime ideal $\fp$ of $A$, and a  Drinfeld module $\phi'$ of rank $r$ defined over a normal extension $K'/F$ that has good reduction at a place $\fq'$ of $K'$ stands above $\fp$.   For any basic $J$-invariant $J^{(\delta_1,\cdots,\delta_{r-1})}$, we have
$$
\begin{array}{ccc}
{\rm ord}_\fp\left(\prod_{[\phi]\in{\rm CM}(\mathcal{O}_K,\iota)}J^{(\delta_1,\cdots,\delta_{r-1})}(\phi)-J^{(\delta_1,\cdots,\delta_{r-1})}(\phi')\right)
\geqslant \frac{q-1}{\rsep\cdot(q^r-1)\cdot e_{K\cdot K',\fp}}\sum_{n\geqslant 1} \#\mathcal{M}_n \\
\end{array}.
$$
Here $e_{K\cdot K', \fp}$ is equal to the ramification index of the compositum field extension $K\cdot K'/F$ at the place $\fp$, and $ \mathcal{M}_n$ is the set of $A$-algebra embeddings $\eta:\mathcal{O}_K\hookrightarrow{\rm End}_{W/\mu^nW}(\phi')$ with $\eta|_A=\phi'\mod \mu^n$.

\end{thm}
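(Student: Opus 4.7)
The plan is to chain together the three intermediate results already established in this subsection, then normalize the valuation from $\nu$ to ${\rm ord}_\fp$. First, I would invoke Theorem \ref{thm1'} to bound $\nu\!\left(\prod_{[\phi]}(J^{(\delta_1,\cdots,\delta_{r-1})}(\phi)-J^{(\delta_1,\cdots,\delta_{r-1})}(\phi'))\right)$ from below by $\tfrac{1}{q^r-1}\sum_{[\phi]}\sum_{n\geqslant 1}\#{\rm Iso}_{W/\mu^nW}(\phi,\phi')$. Rewriting the inner sum over $[\phi]$ as $\#\sqcup_{[\phi]}{\rm Iso}_{W/\mu^nW}(\phi',\phi)$ and applying Corollary \ref{sn'} at each level $n$ introduces the factor $(q-1)$ and replaces the isomorphism counts with $\#S_n$. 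Finally, Proposition \ref{algemb'} swaps out $\#S_n$ for $\tfrac{1}{\rsep}\#\mathcal{M}_n$, yielding the target bound on $\nu(\cdots)$ with the prefactor $\tfrac{q-1}{\rsep(q^r-1)}$.

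It then remains to pass from $\nu$ to ${\rm ord}_\fp$. From Terminology \ref{phi'}, the ring $W$ is the valuation ring of the completion of the maximal unramified extension $L_{\mathfrak{P}}^{nr}$ where $L=H_K\cdot K'$. Writing $\nu = e\cdot{\rm ord}_\fp$ on $F_\fp$, the constant $e$ equals the ramification index of $\fp$ in $L_{\mathfrak{P}}^{nr}/F_\fp$, which equals the ramification index of $\fp$ in $L/F$. Since the Hilbert class field $H_K/K$ is unramified at every finite prime by Definition \ref{cmdef}(iii), taking the compositum with $K'$ introduces no new ramification above $\fp$, so $e = e_{K\cdot K',\fp}$. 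Dividing the $\nu$-bound by $e_{K\cdot K',\fp}$ produces the stated inequality.

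The main technical obstacle is precisely this last ramification bookkeeping: one must check that the unramified-ness of $H_K/K$ at primes above $\fp$ survives base change to $K'$, so that $e_{L,\fp}$ collapses to $e_{K\cdot K',\fp}$ and not a larger quantity. One also has to be attentive that Proposition \ref{algemb'} requires $W/F_\fp$ to be normal, which is why the hypothesis that $K'/F$ be normal is imposed in Terminology \ref{phi'} (so $L/F$ is Galois and $L_{\mathfrak{P}}^{nr}/F_\fp$ is normal). Once these two points are settled, the theorem is a transparent concatenation of Theorem \ref{thm1'}, Corollary \ref{sn'}, and Proposition \ref{algemb'}, with no new ideas required beyond those used in the $\phi'=\varphi$ case of Theorem \ref{main1}.
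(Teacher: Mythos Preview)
Your proposal is correct and follows exactly the paper's approach: the paper simply states that Theorem \ref{main1'} is obtained by combining Theorem \ref{thm1'}, Corollary \ref{sn'}, and Proposition \ref{algemb'}, without spelling out the ramification conversion. Your write-up in fact gives more detail than the paper on why the ramification index collapses from $e_{L,\fp}$ to $e_{K\cdot K',\fp}$ via the unramifiedness of $H_K/K$.
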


\section{Matrix realization}
In this section, we make further assumption that
\begin{enumerate}

 \item[$\bullet$] $\varphi_T=T+\tau^r$ has good supersingular reduction at $\fp$. By Proposition \ref{ssred} (ii), it is equivalent to say that the monic generator $\pi$ of $\fp$ has degree $\deg_T(\pi)$ coprime to $r$. 
  
\end{enumerate}
Moreover, for the purpose of computation, we may assume the following:
\begin{enumerate}
\item[$\bullet$] $K=F(s_1,\cdots, s_t)$ is a degree-$r$ imaginary and normal extension over $F$, where $\mathcal{O}_K=A[s_1,\cdots, s_t]$ and $t$ minimal. We can write $A[s_1,\cdots,s_t]\cong A[X_1,\cdots,X_t]/I_{\mathcal{O}_K}$ for some ideal $I_{\mathcal{O}_K}$ in $A[X_1,\cdots, X_t]$.

\end{enumerate}

Theorem \ref{thm2} says counting number of isomorphism classes is equivalent to count the number of elements in $S_{n}$. A matrix realization for ${\rm End}_{W/\mu^n W}(\varphi)$ will translate the counting problem into counting certain type of integral matrices.  

At the beginning, we recall some well-known results on endomorphism algebra of the supersingular Drinfeld module $\underline{\varphi}:=\varphi \mod \fp$. Here $\varphi$ is the rank-$r$ Drinfeld $A$-module $\varphi_T=T+\tau^r$.

\begin{prop}\label{alg}
Let $D:={\rm End}^{\circ}(\underline{\varphi})$, we have
\begin{enumerate}
\item[(a)] $D$ is a central division algebra over $F$ with $[D:F]=r^2$.
\item[(b)] $H=\F_{q^r}(T)$ embeds into $D$ as an $F$-subalgebra.

\item[(c)] The Hasse invariant of $D$ is

$${\rm Inv}_\nu (D)=\Bigg\{
\begin{array}{cc}
\frac{1}{r},&\ \nu=\fp \\
\frac{-1}{r},& \ \nu=\infty \\
0,& \textrm{ otherwise}

\end{array}.$$

\end{enumerate}
\end{prop}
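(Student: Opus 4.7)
The plan is to combine the supersingularity established in Proposition \ref{ssred} with two standard structural facts: the classification of endomorphism algebras of supersingular Drinfeld modules over finite fields, and the faithfulness of reduction of endomorphisms under good reduction.

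For part (a), Proposition \ref{ssred}(ii) together with the hypothesis that $\deg_T(\pi)$ is coprime to $r$ gives that $\underline{\varphi}$ is supersingular, i.e.\ its height equals $r$. I would then invoke the standard classification (see, e.g., Section~4.12 of \cite{G96}, or the analogous statement in \cite{P23}): for any rank-$r$ Drinfeld module over a finite field whose height equals $r$, the algebra $D = {\rm End}^\circ(\underline{\varphi})$ is a central division algebra over $F$ of dimension $r^2$. This yields (a) at once.

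For part (b), Proposition \ref{ssred}(i) already provides ${\rm End}(\varphi) \cong \F_{q^r}[T]$ in $A$-characteristic zero. Because $\varphi_T = T + \tau^r$ has coefficients in $A \subset W$ with unit leading coefficient, $\varphi$ has good reduction at $\fp$, and the reduction map ${\rm End}(\varphi) \hookrightarrow {\rm End}(\underline{\varphi})$ is injective by the general faithfulness of reduction in good reduction. Tensoring with $F$ over $A$ embeds $H = \F_{q^r}(T)$ into $D$ as an $F$-subalgebra.

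For part (c), I would analyze $D$ locally at each place of $F$. At a finite place $\nu \neq \fp$, the $\nu$-adic Tate module $T_\nu(\underline{\varphi})$ is free of rank $r$ over $A_\nu$ and $D \otimes_F F_\nu$ acts faithfully, forcing $D_\nu \cong M_r(F_\nu)$ by dimension count; hence ${\rm Inv}_\nu(D) = 0$. At $\fp$, supersingularity translates to the $\fp$-divisible module of $\underline{\varphi}$ being a one-dimensional formal $\mathcal{O}_\fp$-module of height $r$, whose endomorphism algebra is classically known to be the central division algebra over $F_\fp$ of invariant $1/r$ (compare the formal $\mathcal{O}_\fp$-module discussion used in the proof of Theorem \ref{thm2}, and \cite{D74}, Proposition~4.2). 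The invariant at $\infty$ is then forced to be $-1/r$ by the reciprocity law $\sum_\nu {\rm Inv}_\nu(D) = 0$. The main obstacle is the identification at $\fp$: one must carefully match the height-$r$ condition on $\underline{\varphi}$ with the height of the associated formal $\mathcal{O}_\fp$-module of dimension one, and then cite the Lubin–Tate/Drinfeld computation giving invariant $1/r$; everything else is bookkeeping.
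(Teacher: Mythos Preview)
Your argument is correct and is the standard route to this result. Note, however, that the paper does not actually prove Proposition~\ref{alg}: it is introduced with the sentence ``we recall some well-known results on endomorphism algebra of the supersingular Drinfeld module $\underline{\varphi}$'' and is stated without proof, functioning as a citation of background facts (implicitly from sources such as \cite{G96} and \cite{D74}). Your outline---supersingularity from Proposition~\ref{ssred}(ii), the classification of endomorphism algebras of supersingular Drinfeld modules for (a) and (c), good reduction plus Proposition~\ref{ssred}(i) for (b), and the Tate-module/formal-module/reciprocity analysis for the local invariants---is exactly how one would fill in the omitted details, so there is nothing to compare against and nothing to correct.
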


As an application of the proposition \ref{alg} (b), we may view ${\rm End}_{W/\mu W}(\varphi)\otimes_A \mathcal{O}_H$ as an $\mathcal{O}_H$-algebra. There is an embedding of ${\rm End}_{W/\mu W}(\varphi)$ into a matrix algebra: 
$${\rm End}_{W/\mu W}(\varphi)\otimes
_A \mathcal{O}_H={\rm End}(\underline{\varphi})\otimes_A \mathcal{O}_H \hookrightarrow D\otimes_F H\simeq M_r(H).$$

Furthermore, let $\sigma$ be a generator of ${\rm Gal}(H/F)$ . Hasse's main theorem on algebra, Gruwald-Wang theorem (see \cite{BG16}, (1.2)) and proposition \ref{alg} (c) implies that the image of $D\otimes_F H$ is isomorphic in $M_r(H)$ to the cyclic algebra 
$$(H/F,\sigma,\pi ):=H[\tau,\tau^{-1}]/(\tau^r-\pi)H[\tau,\tau^{-1}].$$
Here the multiplication in $L[\tau,\tau^{-1}]$ is defined by
$$\alpha \tau^n \cdot \beta \tau^m=\alpha\sigma^n(\beta)\tau^{n+m}.$$

The image of $(H/F,\sigma,\pi)$ in $M_r(H)$ is clear. Viewing $(H/F,\sigma,\pi)$ as an $r$-dimensional $H$-vector space with basis $\mathcal{B}=\{\tau^i, 0\leqslant i\leqslant r-1\}$, left multiplication with respect to $\mathcal{B}$ induces the following map:

$$
\begin{array}{cccc}
\eta:& H^r& \longrightarrow& M_r(H)\\

       & (x_1,\cdots,x_r)&\mapsto&M_{(x_1,\cdots,x_r)}:= \left(\begin{array}{cccc}x_1 & x_2 & \cdots & x_r \\\pi\sigma(x_r) & \sigma(x_1) & \cdots & \sigma(x_{r-1}) \\\vdots &  & \ddots & \vdots \\ \pi\sigma^{r-1}(x_2) & \cdots & \pi\sigma^{r-1}(x_r) & \sigma^{r-1}(x_1)\end{array}\right)

\end{array}
$$

Moreover, $\mathcal{O}_H[\tau,\tau^{-1}]/(\tau^r-\pi)\mathcal{O}_H[\tau,\tau^{-1}]$ is a maximal order of $(H/F,\sigma,\pi)$. This is described in \cite{BG16}, section 4.5. Note that our extension $H/F$ is a constant extension.

On the other hand, ${\rm End}(\underline{\varphi})$ is a maximal order in $D$. Hence  ${\rm End}(\underline{\varphi})\otimes_A \mathcal{O}_H$ is a maximal order of $D\otimes_F H$.
 Similar to the rank-$2$ case, since the class number of $H$ is $1$ (i.e. $\mathcal{O}_H$ is a P.I.D.), there is only one maximal order in $D\otimes_F H$ where $\mathcal{O}_H$ embeds optimally.

 Thus we have the matrix realization of ${\rm End}_{W/\mu W}(\varphi)={\rm End}(\underline{\varphi})$ in $M_r(H)$. In conclusion, we get the following proposition:
\begin{prop}\label{endomu}
Under the embedding $${\rm End}_{W/\mu W}(\varphi)\otimes
_A \mathcal{O}_H \hookrightarrow D\otimes_F H\simeq M_r(H).$$ We have, up to conjugation in $M_r(H)$, that
$$
{\rm End}_{W/\mu W}(\varphi)=\left\{   \left(\begin{array}{cccc}x_1 & x_2 & \cdots & x_r \\\pi\sigma(x_r) & \sigma(x_1) & \cdots & \sigma(x_{r-1}) \\\vdots &  & \ddots & \vdots \\ \pi\sigma^{r-1}(x_2) & \cdots & \pi\sigma^{r-1}(x_r) & \sigma^{r-1}(x_1)\end{array}\right)
             \ \bigg| \   x_i\in\mathcal{O}_H    \textrm{ for all }i             \right\}:=\mathcal{M}. 
$$
\end{prop}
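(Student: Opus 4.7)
The plan is to combine the three ingredients already laid out in the paragraphs preceding the statement: the identification of $D \otimes_F H$ with the cyclic algebra $(H/F,\sigma,\pi)$, the regular representation of this cyclic algebra as $r\times r$ matrices, and the uniqueness (up to conjugation) of a maximal order in $D \otimes_F H$ into which $\mathcal{O}_H$ embeds optimally.

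First, I would fix the isomorphism $D\otimes_F H \cong M_r(H)$ coming from Hasse's theorem together with the Grunwald--Wang computation, and identify $M_r(H)$ with $(H/F,\sigma,\pi)=H[\tau,\tau^{-1}]/(\tau^r-\pi)$ via the left regular representation on the basis $\mathcal{B}=\{1,\tau,\ldots,\tau^{r-1}\}$. A direct computation shows that the element $x_1+x_2\tau+\cdots+x_r\tau^{r-1}$, acting on $\mathcal{B}$ by left multiplication, produces precisely the matrix $M_{(x_1,\ldots,x_r)}$ displayed in the statement (the factors of $\pi$ come from the reduction $\tau^r=\pi$ and the $\sigma$'s from the twisted multiplication $\tau\alpha=\sigma(\alpha)\tau$). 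Under this identification, the subring $\mathcal{O}_H[\tau,\tau^{-1}]/(\tau^r-\pi)$ corresponds exactly to the set $\mathcal{M}$ described in the statement.

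Next, I would verify that $\mathcal{M}$ is a maximal $\mathcal{O}_H$-order in $M_r(H)$. This is exactly the content of the reference to \cite{BG16}, section 4.5: because $H/F$ is an unramified (indeed constant) cyclic extension and $\pi$ is a local parameter at $\mathfrak{p}$ with the correct invariant, $\mathcal{O}_H[\tau,\tau^{-1}]/(\tau^r-\pi)\mathcal{O}_H[\tau,\tau^{-1}]$ is a maximal order in the cyclic algebra. On the other side, ${\rm End}(\underline{\varphi})$ is a maximal $A$-order in the central division algebra $D$ (a standard fact for supersingular Drinfeld modules, since the height equals $r$), so tensoring with $\mathcal{O}_H$ yields a maximal $\mathcal{O}_H$-order in $D\otimes_F H$. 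Thus both sides are maximal $\mathcal{O}_H$-orders in $M_r(H)$ that contain $\mathcal{O}_H$ (embedded as the diagonal copy appearing in the $x_1,\sigma(x_1),\ldots$ diagonal of $\mathcal{M}$).

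Finally, I would invoke the uniqueness statement: since $\mathcal{O}_H$ is a principal ideal domain (class number of $H=\mathbb{F}_{q^r}(T)$ equals one) and the completion at every finite prime of $H$ splits $D\otimes_F H$, the Skolem--Noether / local--global theory of maximal orders in $M_r(H)$ that admit an optimal embedding of $\mathcal{O}_H$ gives a unique conjugacy class; see the analogue of the rank-$2$ argument already alluded to in the text. Conjugating ${\rm End}(\underline{\varphi})\otimes_A\mathcal{O}_H$ into $\mathcal{M}$ then finishes the proof. The main technical point, and the one step I would write out in most detail, is the uniqueness of the maximal order containing $\mathcal{O}_H$ optimally; the rest is a direct matching of bases and a standard maximality check.
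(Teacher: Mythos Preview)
Your proposal is correct and follows essentially the same route as the paper: the paper's argument (given in the paragraphs immediately preceding the proposition rather than in a separate proof environment) identifies $D\otimes_F H$ with the cyclic algebra $(H/F,\sigma,\pi)$ via Hasse and Grunwald--Wang, realizes it in $M_r(H)$ by the left regular representation on $\{1,\tau,\ldots,\tau^{r-1}\}$, cites \cite{BG16} for the maximality of $\mathcal{O}_H[\tau,\tau^{-1}]/(\tau^r-\pi)$, notes that ${\rm End}(\underline{\varphi})\otimes_A\mathcal{O}_H$ is also a maximal order, and then uses that $\mathcal{O}_H$ is a PID to conclude uniqueness up to conjugation of the maximal order in which $\mathcal{O}_H$ embeds optimally. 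You have reproduced each of these steps, and you have correctly singled out the uniqueness of the maximal order as the one point deserving detailed justification.
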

Now we prove a matrix realization for ${\rm End}_{W/\mu^n W}(\varphi)$ with $n\geqslant 2$.

\begin{prop}\label{punr}
Suppose that $\fp$ is unramified in $K/F$, then we have the endomorphism ring

$${\rm End}_{W/\mu^n W}(\varphi)=\mathcal{O}_H+\pi^{n-1}{\rm End}_{W/\mu W}({\varphi})$$

\end{prop}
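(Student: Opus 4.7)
The plan is to establish the two inclusions separately, using throughout the observation that since $\fp$ is unramified in $K/F$ and $H_K/K$ is unramified at primes above $\fp$ (with $W$ the discrete valuation ring of the completed maximal unramified extension of $L_\mathfrak{P}$), the uniformizer $\mu$ of $W$ is a unit multiple of $\pi$, i.e.\ $\pi W = \mu W$. I can thus freely interchange $\pi^j W$ with $\mu^j W$.

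For the inclusion $\supseteq$: the containment $\mathcal{O}_H \subseteq {\rm End}_{W/\mu^n W}(\varphi)$ follows by reduction from $\mathcal{O}_H = \F_{q^r}[T] \subseteq {\rm End}_W(\varphi)$, which is Proposition \ref{ssred}(i). For the other generator, given $v \in {\rm End}(\underline{\varphi}) = {\rm End}_{W/\mu W}(\varphi)$, lift it arbitrarily to $\tilde v \in W\{\tau\}$. Since $\bar v$ commutes with $\varphi_T$ modulo $\mu$, the commutator $\tilde v \varphi_T - \varphi_T \tilde v$ lies in $\mu \cdot W\{\tau\}$; therefore $\pi^{n-1}\tilde v \cdot \varphi_T - \varphi_T \cdot \pi^{n-1}\tilde v \in \pi^{n-1}\mu \cdot W\{\tau\} = \mu^n W\{\tau\}$, showing $\pi^{n-1}\tilde v \bmod \mu^n$ is an element of ${\rm End}_{W/\mu^n W}(\varphi)$.

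For the inclusion $\subseteq$, I proceed by induction on $n$. The base case $n=1$ is trivial since $\mathcal{O}_H \subseteq {\rm End}(\underline\varphi)$. For the inductive step, consider the reduction map
$$r:\ {\rm End}_{W/\mu^n W}(\varphi) \longrightarrow {\rm End}_{W/\mu W}(\varphi) = {\rm End}(\underline\varphi).$$
A direct check shows that $\ker(r) = \mu \cdot {\rm End}_{W/\mu^{n-1}W}(\varphi)$: indeed, if $u = \mu v$ (for a lift $v \in W/\mu^{n-1}W\{\tau\}$), then $u \varphi_T \equiv \varphi_T u \bmod \mu^n$ is equivalent to $v \varphi_T \equiv \varphi_T v \bmod \mu^{n-1}$. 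By the inductive hypothesis this kernel equals $\mu \bigl(\mathcal{O}_H + \pi^{n-2}{\rm End}(\underline\varphi)\bigr) = \pi \mathcal{O}_H + \pi^{n-1}{\rm End}(\underline\varphi) \subseteq \mathcal{O}_H + \pi^{n-1}{\rm End}(\underline\varphi)$. It therefore remains to show that $\mathrm{Im}(r) \subseteq \mathcal{O}_H$ for $n \geqslant 2$; once this is done, writing $r(u) = \bar a$ for some $a \in \mathcal{O}_H$ (lifted into $W/\mu^n W$) gives $u - a \in \ker(r)$, whence $u \in \mathcal{O}_H + \pi^{n-1}{\rm End}(\underline\varphi)$.

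The main obstacle is the claim $\mathrm{Im}(r) \subseteq \mathcal{O}_H$ for $n \geqslant 2$, which says that an endomorphism $\bar u$ of $\underline\varphi$ that lifts to $W/\mu^2 W$ must already come from $\mathcal{O}_H$. I would establish this via a first-order obstruction argument: if $\tilde u \in W\{\tau\}$ is any lift of $\bar u$, the commutator $\tilde u \varphi_T - \varphi_T \tilde u$ lies in $\mu W\{\tau\}$, and the class of $\mu^{-1}(\tilde u \varphi_T - \varphi_T \tilde u)$ in $k\{\tau\}/[k\{\tau\}, \varphi_T \bmod \mu]$ is an obstruction that is independent of the choice of lift. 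The vanishing of this class on all $\bar u$ that lift to $W/\mu^2 W$ combined with ${\rm End}_W(\varphi) = \mathcal{O}_H$ (Proposition \ref{ssred}(i), characteristic-zero rigidity) forces those $\bar u$ to lie in $\mathcal{O}_H$. Alternatively, since $\varphi$ is the canonical lift of $\underline\varphi$ to a rank-$1$ Drinfeld $\mathcal{O}_H$-module over $W$, Drinfeld's deformation theory (as invoked in the proof of Theorem \ref{thm2}) gives the same conclusion: endomorphisms that commute with the $\mathcal{O}_H$-structure lift freely, while endomorphisms outside $\mathcal{O}_H$ obstruct at first order.
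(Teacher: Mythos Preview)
Your inductive framework and the inclusion $\supseteq$ are fine, and your reduction of $\subseteq$ to the single claim ``$\mathrm{Im}(r)\subseteq\mathcal{O}_H$ for $n\geqslant 2$'' is exactly the right shape. The gap is in how you justify that claim.

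In approach (a) you assert that the vanishing of the first-order obstruction class, \emph{combined with} ${\rm End}_W(\varphi)=\mathcal{O}_H$, forces $\bar u\in\mathcal{O}_H$. This does not follow. The equality ${\rm End}_W(\varphi)=\mathcal{O}_H$ says that an endomorphism of $\underline\varphi$ which lifts to \emph{all} of $W$ lies in $\mathcal{O}_H$; it says nothing a priori about one that lifts only to $W/\mu^2 W$. Nothing you have written rules out a $\bar u\notin\mathcal{O}_H$ whose obstruction vanishes at level $2$ but appears at level $3$. Likewise in approach (b), the sentence ``endomorphisms outside $\mathcal{O}_H$ obstruct at first order'' is precisely the assertion to be proved; invoking the uniqueness of the canonical lift does not by itself pin down the \emph{level} at which the obstruction first appears.

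The paper closes this gap by passing to the associated formal $\mathcal{O}$-module $G$ (with $\mathcal{O}$ the ring of integers of $H_\fp$) and running Gross's cocycle argument: one shows that multiplication by $\pi$ induces an injection $R_{n-1}/R_n\hookrightarrow R_n/R_{n+1}$ via the map $\alpha_n$ into $H^2(G,\pi^nW/\pi^{n+1}W)$, so the whole tower is determined by $R_0/R_1$. That base case is then computed \emph{directly}: using the explicit basis $R=\mathcal{O}\oplus\mathcal{O}\tau\oplus\cdots\oplus\mathcal{O}\tau^{r-1}$ and the formula $g_\pi(x)=\pi x+x^{q^{r\deg_T\pi}}$, one checks by hand that $\tau^i\notin{\rm End}_{W/\pi^2W}(G)$ for $1\leqslant i\leqslant r-1$, whence $R/R_1\cong(\mathcal{O}/\pi\mathcal{O})^{r-1}$. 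This concrete verification is the step your sketch is missing; once it is in place, your induction (or the paper's cocycle diagram) finishes the proof.
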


The strategy of this proof mainly follows section 3 in \cite{G86}. In order to prove Proposition \ref{punr}, we prove a key lemma first.  Let $\mathcal{O}$ be the ring of integers in $H_\fp$, the completion of $H$ at $\fp$. On the other hand, let $M_\fp$ be the completion of the maximal unramified extension of $H_\fp$, and $W'$ be the ring of integers in $M_\fp$. Then we have $W'=W$ and we may choose a uniformizer $\mu$ of $W$ to be $\pi$.

We consider the formal $A_\fp$-module $\underline{G}=(\underline{G}(x,y):=x+y,\  \underline{\varphi}(x))$, where $\underline{\varphi}(x)$ is induced from the Drinfeld module $\underline{\varphi}:A\rightarrow k\{\tau\}$ (see \cite{G86} (1.1) \& (1.2) for definition of formal $A_\fp$-module). Since $\varphi$ has supersingular reduction at $\fp$, we can see that $\underline{G}$ is a formal $A_\fp$-module of height $r$. 

 Denote $R:={\rm End}_{k}(\underline{G})$. From the fact that $\mathcal{O}$ is the completion of $H={\rm End}_W(\varphi)$ at $\fp$, and ${\rm End}_W(\varphi)\hookrightarrow {\rm End}_k(\underline{\varphi})$, we have an embedding $$\mathcal{O}\hookrightarrow R.$$ We may fix an embedding $\iota:\mathcal{O}\hookrightarrow R$ such that the action on ${\rm Lie}_k(\underline{G})$ is the reduction map modulo $\mu$.  Thus we may view $\underline{G}$ as a formal $\mathcal{O}$-module over $k'$ of height $1$. 

Similar to proposition 2.1 in \cite{G86}, there is a formal $\mathcal{O}$-module $G=(G(x,y), g(x))$ over $W$ (the canonical lifting of the pair $(\underline{G},\iota)$),  with $g_{\pi}(x)=\pi x+x^{q^{r\deg_T \pi}}$. The formal $\mathcal{O}$-module $G$ reduces to $\underline{G}$ after reduction modulo $\pi$. Such a  $\mathcal{O}$-module $G$ over $W$ is unique up to $W$-isomorphism. Note that in our proof the objects  ``$\underline{G}$ over $k$'' and ``$G$ over $W$'' are reversed in Gross' proof, where the objects are formal $\mathcal{O}$-modules ``$G$ over $k$'' and `` $\underline{G}$ over $W$ ''.

For $n\geqslant 1$, define $R_0=R$ and $R_{n-1}={\rm End}_{W/\pi^n W}(G)$. The reduction map modulo $\pi^n$ induces the injections
$$R_n\hookrightarrow R_{n-1}\hookrightarrow \cdots \hookrightarrow R_1\hookrightarrow R_0=R.$$
From the construction of $G$, we can see that ${\rm End}_{W}(G)=\mathcal{O}$. Also, we know that $\mathcal{O} \subseteq R_n$ for all $n\geqslant 1$.

\begin{lem}\label{keylem}

As $\mathcal{O}$-modules, we have $$ R_n=\mathcal{O}+\pi^n R \textrm{ for all } n\geqslant1.$$

\end{lem}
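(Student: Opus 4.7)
The plan is to prove the equality $R_n = \mathcal{O} + \pi^n R$ by a two-sided containment, closely following the strategy of \cite{G86}, Section 3, adapted to the present formal $A_\fp$-module setting. The main structural input I would use is the left $\mathcal{O}$-module decomposition
$$R \;=\; \bigoplus_{i=0}^{r-1}\,\mathcal{O}\cdot \Pi^{i},$$
where $\Pi$ is a uniformizer of the division algebra $D\otimes_F F_\fp$ satisfying $\Pi^{r}=\pi\cdot u$ for some unit $u$ and $\Pi\,\alpha = \sigma(\alpha)\,\Pi$ for $\alpha\in\mathcal{O}$, with $\sigma$ the Frobenius generator of $\Gal(H_\fp/F_\fp)$. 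Writing elements of $R$ in this basis gives a clean way to measure how far an endomorphism of $\underline{G}$ sits from $\mathcal{O}$.

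For the inclusion $\mathcal{O}+\pi^{n}R \subseteq R_n$: the factor $\mathcal{O}\subseteq R_n$ is immediate from ${\rm End}_W(G)=\mathcal{O}$ by reducing modulo $\pi^{n+1}$. For $\pi^{n}R\subseteq R_n$, I would take any $f\in R$, represent its reduction by a power series $\bar f(X)\in k[[X]]$, lift naively to $\tilde f(X)\in W[[X]]$, and examine the commutator $[\tilde f, g_a]\in W[[X]]$ for $a\in A_\fp$. Because $\bar f$ genuinely commutes with the $A_\fp$-action on $\underline{G}$, every coefficient of $[\tilde f, g_a]$ is divisible by $\pi$, so multiplying by $\pi^{n}$ yields a commutator divisible by $\pi^{n+1}$, and therefore $\pi^{n}\tilde f\in R_n$.

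For the reverse inclusion $R_n \subseteq \mathcal{O}+\pi^{n}R$, I would induct on $n$, the case $n=0$ being trivial. Given $f\in R_n\subseteq R_{n-1}$, the inductive hypothesis produces a decomposition $f=\alpha+\pi^{n-1}g$ with $\alpha\in\mathcal{O}$ and $g\in R$; writing $g=\sum_{i=0}^{r-1}\beta_i\Pi^{i}$ in the basis above, the task reduces to showing $\beta_i\in\pi\mathcal{O}$ for $1\leqslant i\leqslant r-1$. This is precisely where the canonical-lift property ${\rm End}_W(G)=\mathcal{O}$ is used: any failure of the $\Pi^{i}$-coefficients to acquire one additional factor of $\pi$ per level would, upon passing to the inverse limit $\varprojlim_n R_n = {\rm End}_W(G)$, produce a non-scalar endomorphism of $G$, contradicting uniqueness of the canonical lift.

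The main obstacle is making this last step quantitative: one must verify that each step of the tower $R_n \hookrightarrow R_{n-1}$ tightens the $\Pi$-coefficients by \emph{exactly} one power of $\pi$, not less. I would carry this out by an explicit power-series calculation using the Lubin--Tate-type formula $g_\pi(X)=\pi X+X^{q^{r\deg_T\pi}}$ recorded just before the lemma: since $\Pi$ is realized (up to unit) by the relative Frobenius substitution induced on the canonical lift's coordinate, the commutator $[\Pi^{i},g_\pi]$ vanishes only modulo $\pi$, and tracking this mismatch inductively yields the required extra factor of $\pi$ on each $\beta_i$ at every level. This reproduces Gross's computation in the elliptic-curve setting and closes the induction.
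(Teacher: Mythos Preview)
Your overall architecture is sound and matches the paper's: both reduce the lemma to (a) the $\mathcal{O}$-module decomposition $R=\bigoplus_{i=0}^{r-1}\mathcal{O}\tau^{i}$ (your $\Pi$ is the paper's $\tau$), (b) the base case $R/R_1\cong(\mathcal{O}/\pi)^{r-1}$, and (c) the statement that passing from $R_{n-1}$ to $R_n$ forces one extra factor of $\pi$ on the non-scalar coordinates. Your easy inclusion $\mathcal{O}+\pi^nR\subseteq R_n$ is fine (note $W$ has characteristic $p$, so the formal group law is additive and commutation with the $A_\fp$-action is all you need).

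Where you diverge from the paper is precisely at step (c), which you correctly flag as ``the main obstacle.'' Your first justification, the inverse-limit argument, does not work as written: knowing that ${\rm End}_W(G)=\mathcal{O}$ only constrains \emph{compatible} systems in $\varprojlim R_n$, whereas what you need is a level-by-level statement, namely that multiplication by $\pi$ induces an \emph{injection} $R_{n-1}/R_n\hookrightarrow R_n/R_{n+1}$ for each $n$. Your fallback, an explicit commutator computation with $g_\pi$, is plausible but you do not carry it out, and the sentence ``this reproduces Gross's computation in the elliptic-curve setting'' is slightly misleading: Gross in \cite{G86} works with formal modules of arbitrary height, and his actual argument is the cohomological one the paper adopts, not a bare power-series calculation.

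The paper proves the injectivity cleanly via Drinfeld's deformation cohomology: to $f\in R_{n-1}$ one attaches the symmetric $2$-cocycle $(\triangle(f),\delta_a(f))$ measuring the obstruction to lifting $f$ to $R_n$, giving an embedding $\alpha_n:R_{n-1}/R_n\hookrightarrow H^2(G,\pi^nW/\pi^{n+1}W)$. Computing the cocycle of $g_\pi\circ f$ shows both that $\pi$ annihilates $R_{n-1}/R_n$ (hence your easy inclusion) and that the square with $\alpha_{n+1}$ commutes, with the right-hand vertical map an isomorphism; injectivity of $\cdot\pi$ on the left follows. Together with the explicit base case $R/R_1\cong(\mathcal{O}/\pi)^{r-1}$ (the paper checks directly that no $\tau^i$, $1\le i\le r-1$, lies in $R_1$), a cardinality count forces $R_n=\mathcal{O}+\pi^nR$. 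If you want to avoid cohomology, you would need to produce this injectivity by hand, which amounts to showing that if $\pi^{n-1}g\in R_n$ with $g\in R$ then $g\in R_1$; that is doable from the Lubin--Tate series $g_\pi(X)=\pi X+X^{q^{r\deg_T\pi}}$, but it is exactly the computation the cohomology language packages, and you have not yet written it down.
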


\begin{proof}

The main part of the proof of claim is the same as in proposition 3.3 \cite{G86}, the only difference is that $R_n$ in our situation is a free $\mathcal{O}$-module of rank $r$ for $n\geqslant 0$.

First of all, we may assume $G=(G(x,y),g(x))$ is given by a formal group law $G(x,y)\in W'[[x,y]]$. Then we apply Drinfeld's formal cohomology theory (see \cite{D74} section 4, pp 570-571). Let $\underline{G}_1(x,y)$ be the partial derivative of $\underline{G}(x,y)$ with respect to $x$. Then we have $\underline{G}_1=1$. For an endomorphism $f(x)\in R_{n-1}$, we compute the following series:
$$\alpha_f(x,y):=G_1(0,G(f(x),f(y)))^{-1}=1 \textrm{ in } k'[[x,y]], \textrm{ and } \beta_f(x):=G_1(0,f(x))^{-1}=1  \textrm{ in } k'[[x]].$$

The data 
\begin{align*}{}
 \triangle(f)(x,y):=\alpha_f(x,y)[f(G(x,y))-G(f(x),f(y))]\\
\delta_a(f)(x):=\beta_f(x)[f(a(x))-a(f(x))] \ \textrm{ for }a\in A
\end{align*}

defines a symmetric $2$-cocycle of the $G$ with coefficients in $\pi^nW/ \pi^{n+1} W$. The cohomology class $(\triangle, \delta_a)$ depends only on the class of $f\in R_{n-1}/ R_n$. Thus we have the following map

$$\alpha_n: R_{n-1}/R_n\hookrightarrow H^2(G,\pi^n W/\pi^{n+1} W).$$

Next, we compute $\triangle(g_\pi\circ f)(x,y)$ and $\delta_a(g_\pi \circ f)$. One can see that the $\mathcal{O}$-module $R_{n-1}/R_n$ is annihilated by $\pi$. We further have the following commutative diagram:
$$
\begin{array}{cccc}
\alpha_n:& R_{n-1}/R_n&\longrightarrow&H^2(G,\pi^n W'/\pi^{n+1} W')\\
\ &\hookdownarrow{\pi} &\                 &{\simeq}\downarrow{\pi}\\
\alpha_{n+1}:& R_n/R_{n+1}& \longrightarrow          &H^2(G,\pi^{n+1} W'/\pi^{n+2} W')
\end{array}.
$$
Thus to prove the claim, it is enough to prove the following two statements:

\begin{enumerate}
\item[(a) ]$R=\mathcal{O}+\mathcal{O}\tau+\cdots+\mathcal{O}\tau^{r-1},\textrm{ where } \tau(x):=x^q.$\item[(b) ]$R/R_1\cong (\mathcal{O}/\pi \mathcal{O})^{r-1}$ as $\mathcal{O}$-modules.

\end{enumerate}

The statement (a) is clear since we have
$${\rm End}_{k}(G)=R\overset{\mathrm{def}}{=}{\rm End}_{k}(\underline{G}).$$
Recall that $\underline{G}$ is a formal $A_\fp$-module induced from the Drinfeld $A$-module $\underline{\varphi}_T=T+\tau^r$ over $k=W/\pi W$. As ${\rm End}_{k}(\underline{G} |_A)={\rm End}_{k}(\underline{\varphi})$ is a free $\mathcal{O}_H$-module of rank $r$, and completion at $\fp$ does not increase the rank of  ${\rm End}_{k}(\underline{G})$ as $\mathcal{O}$-module. Moreover, one can check that $\tau(x):=x^q$ lies in ${\rm End}_{k}(\underline{G})$ by using the definition of endomorphism ring for formal $\mathcal{O}$-module  (see (1.3) in \cite{G86} ). Thus we have
$R\supset \mathcal{O}+\mathcal{O}\tau+\cdots+\mathcal{O}\tau^{r-1}.$ Hence we get the equality from counting the rank on both sides. One can also follow Gross' computation for the case ``$\mathcal{O}$ unramified over $A$'' in arguments after (3.6) in \cite{G86}, page 323.

For the statement (b), we know that $R/R_1$ is annihilated by $\pi$, statement (a): $R=\mathcal{O}+\mathcal{O}\tau+\cdots+\mathcal{O}\tau^{r-1}$, and $\mathcal{O}\subset R_1$. The fundamental theorem for module over PID shows that 
$$R_1\supseteq\mathcal{O}+\pi\mathcal{O}\tau+\cdots+\pi\mathcal{O}\tau^{r-1}.$$

Suppose that the containment is proper, then there must be some $\tau^i$ lies in $R_1$ with $1\leqslant i\leqslant r-1$. Then we check by definition that $\tau^i\not\in {\rm End}_{W/\pi^2 W}(G)$. This gives a contradiction. Hence the proof of (b) is complete.

\end{proof}

\begin{proof}[proof of Proposition \ref{punr}]
 
With Lemma \ref{keylem} in hand, the proof is now just an application of $${\rm End}_{W/\pi^{n+1} W}(G)=\mathcal{O}+\pi^n {\rm End}_{W/\pi W}(G) \textrm{ for all } n\geqslant1.$$
 by taking restriction of the formal $\mathcal{O}$-module $G$ at $\mathcal{O}_H$ .

\end{proof}

Combining the above proposition with our matrix realization of ${\rm End}_{W/\mu W}(\varphi)$( see Proposition \ref{endomu}), we have following interpretation in the case $\fp$ is unramified in $K/F$.
\begin{equation}
{\rm End}_{W/\mu^n W}(\varphi)=\left\{M_{(x_1,\cdots,x_r)} \in \mathcal{M}\mid x_i\equiv 0 \mod \pi^{n-1}    \textrm{ for } 2\leqslant i\leqslant r    \right\}. \tag{5}
\end{equation}

The remaining case is that $\fp$ ramifies in $K/F$. Since  $K/F$ is normal, its ramification index at $\fp$ is independent of the choice of primes of $K$ stand above $\fp$. Thus we denote by $e_{K,\fp}$ the ramification index of $\fp$ in $K$.

\begin{prop}\label{pram}
Suppose that $\fp$ is ramified in $K/F$, we have
\begin{equation}
{\rm End}_{W/\mu^n W}(\varphi)=\left\{M_{(x_1,\cdots,x_r)} \in \mathcal{M} \mid x_i\equiv 0 \mod \pi^{z-1}    \textrm{ for } 2\leqslant i\leqslant r    \right\}, \tag{6}
\end{equation}
where $z:=\lfloor\frac{n+e_{K,\fp}-1}{e_{K,\fp}}\rfloor$.
\end{prop}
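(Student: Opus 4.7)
Set $e := e_{K,\fp}$, so that $\pi$ and $\mu^e$ differ by a unit of $W$, and let $z := \lceil n/e \rceil$ (equivalently, $z = \lfloor(n+e-1)/e\rfloor$ as in the statement). The plan is to adapt Gross's formal-cohomology argument from Lemma \ref{keylem} to the ramified base $W/\mu^n W$, using the chain of ideal inclusions
\begin{equation*}
\pi^z W \ \subseteq\ \mu^n W \ \subseteq\ \pi^{z-1} W,
\end{equation*}
which follow from $(z-1)e < n \leq ze$, to sandwich ${\rm End}_{W/\mu^n W}(\varphi)$ between rings that can be computed by the unramified case.

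For the lower bound $\mathcal{O}_H + \pi^{z-1}\mathcal{M} \subseteq {\rm End}_{W/\mu^n W}(\varphi)$, I would work with the unramified subring $W' \subseteq W$ given by the ring of integers in the completion of the maximal unramified extension of $F_\fp$. Since $\nu$ restricted to $W'$ equals $e$ times the normalized valuation of $W'$, a direct check yields $\mu^n W \cap W' = \pi^z W'$, so we obtain an injection of $A$-algebras $W'/\pi^z W' \hookrightarrow W/\mu^n W$. Applying Proposition \ref{punr} at level $z$ gives ${\rm End}_{W'/\pi^z W'}(\varphi) = \mathcal{O}_H + \pi^{z-1}\mathcal{M}$, and base-changing this inclusion along $W'/\pi^z W' \hookrightarrow W/\mu^n W$ produces the desired containment.

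For the upper bound I would argue by induction on $n$, with base case $n=1$ being Proposition \ref{endomu}. Assuming the result for all $n' < n$ and $z \geq 2$, apply it to $n' = (z-1)e < n$ (which satisfies $\lceil n'/e \rceil = z-1$) to get ${\rm End}_{W/\pi^{z-1}W}(\varphi) = \mathcal{O}_H + \pi^{z-2}\mathcal{M}$. The surjection $W/\mu^n W \twoheadrightarrow W/\pi^{z-1} W$ then shows every $u \in {\rm End}_{W/\mu^n W}(\varphi)$ reduces to some $a + \pi^{z-2} v$ with $a \in \mathcal{O}_H$ and $v \in \mathcal{M}$; the remaining task is to show $v \in \pi\mathcal{M}$. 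By the obstruction-theoretic framework from the proof of Lemma \ref{keylem}, applied to the base change $G_W$ of Gross's canonical lift, this reduces to a formal $2$-cocycle computation in $H^2(G_W, \mu^{n-1} W / \mu^n W)$ paralleling the unramified one, together with the observation that the only obstruction-free lifts from $W/\mu^{n-1}W$ to $W/\mu^{n}W$ are precisely those in $\mathcal{O}_H + \pi^{z-1}\mathcal{M}$.

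The main technical obstacle will be carrying out the cohomology calculation in the ramified setting. The unramified computation of Lemma \ref{keylem} relied on the clean isomorphism $R/R_1 \cong (\mathcal{O}/\pi)^{r-1}$ and the $\pi$-adic filtration on $W'$; in our situation the relevant filtration is the finer $\mu$-adic filtration on $W$, and multiplication by $\pi$ shifts by $e$ graded pieces at a time rather than one. I expect the key intermediate lemma to be that the endomorphism ring ${\rm End}_{W/\mu^n W}(\varphi)$ remains constant as $n$ varies within each interval $((z-1)e, ze]$ and jumps only when $n$ enters a new interval, which is precisely what produces the floor/ceiling exponent $z-1$ in the statement.
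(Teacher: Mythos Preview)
Your lower-bound argument via the injection $W'/\pi^z W' \hookrightarrow W/\mu^n W$ is correct and matches the paper. Your closing intuition---that ${\rm End}_{W/\mu^n W}(\varphi)$ is constant on each interval $((z-1)e, ze]$ and jumps only at multiples of $e$---is exactly the content of the proposition. But the route you propose to the upper bound is far more laborious than what the paper does, and the paper's shortcut is worth knowing.

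The paper does not redo any formal cohomology over $W$. Instead it observes that $\varphi_T = T + \tau^r$ is defined over $A \subset W'$, and that by the unramified case (Proposition~\ref{punr}) every endomorphism of $\varphi$ modulo any power of $\pi$ already has a representative with coefficients in $\mathcal{O}_H \subset W'$. In the paper's words, ``$\varphi$ has no new endomorphisms over $W$'': enlarging the base from $W'$ to the ramified extension $W$ introduces no additional endomorphisms at any truncation level. This yields directly
\[
{\rm End}_{W/\mu^n W}(\varphi) \;=\; {\rm End}_{W'/(\mu^n W \cap W')}(\varphi) \;=\; {\rm End}_{W'/\pi^z W'}(\varphi),
\]
the last equality being your own computation $\mu^n W \cap W' = \pi^z W'$. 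Now Proposition~\ref{punr} applied to $W'$ finishes the job. The two identities at the end of the paper's proof,
\[
{\rm End}_{W/\mu^{e\cdot n - k}W}(\varphi) = {\rm End}_{W/\mu^{e\cdot n}W}(\varphi) \quad (1 \le k \le e-1), \qquad {\rm End}_{W/\mu^{e\cdot n}W}(\varphi) = {\rm End}_{W'/\pi^n W'}(\varphi),
\]
are just a restatement of this observation and give precisely the ``constant on intervals'' behaviour you predicted.

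So your cohomological plan is not wrong, but it re-proves Lemma~\ref{keylem} in a harder setting instead of simply quoting the unramified result. The conceptual gain from the paper's argument is that the ramification of $W/W'$ is irrelevant to the endomorphism ring of $\varphi$, because everything is already happening over $W'$; the only role of $\mu$ is to determine which power of $\pi$ you are effectively reducing modulo.
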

\begin{proof}

The arguments are similar to \cite{D91} pp. 248-249.  Let $\mu$, $W$ and $\pi$, $W'$ be as defined before. We have $\mu^{e(\fq/\fp)}=\pi\cdot u$ for some unit $u$ in $W'$. Firstly, follow the arguments in the  ``unramified'' case while replacing $W$ by $W'$ and $\mu$ by $\pi$, we still have
$${\rm End}_{W'/\pi^n W'}(\varphi)=\left\{M_{(x_1,\cdots,x_r)} \in \mathcal{M}\mid x_i\equiv 0 \mod \pi^{n-1}    \textrm{ for } 2\leqslant i\leqslant r    \right\}.$$

Now we look into ${\rm End}_{W/\pi^n W}(\varphi)$. Since $\varphi_T=T+\tau^r$ has no new endomorphisms over $W$, the endomorphism rings ${\rm End}_{W/\pi^nW}(\varphi)={\rm End}_{W'/\pi^nW'}(\varphi)$ contain all the endomorphisms of $\varphi$ modulo power of $\mu$. Thus we have
$${\rm End}_{W/\mu^n W}(\varphi)={\rm End}_{W'/(\mu^n W\cap W')}(\varphi).$$  

From the equality $\mu^{e(\fq/\fp)}=\pi\cdot u$, we have the following two identities:
$${\rm End}_{W/\mu^{e(\fq/\fp)\cdot n-k} W}(\varphi)={\rm End}_{W/\mu^{e(\fq/\fp)\cdot n} W}(\varphi) \textrm{ for } 1\leqslant k\leqslant e(\fq/\fp)-1; $$$${\rm End}_{W/\mu^{e(\fq/\fp)\cdot n} W}(\varphi)={\rm End}_{W'/\pi^n W'}(\varphi).$$
Hence the proposition follows from equalities above and the structure of ${\rm End}_{W'/\pi^n W'}(\varphi)$.
\end{proof}

On the other hand, we give a computational characterization of the sets $S_n$ and $\mathcal{M}_n$.

\begin{lem}\label{snmn}
We have {\center$S_n=$ $\Bigg\{(\alpha_1,\cdots, \alpha_t)\ \bigg|$
$
\begin{array}{cc}
 \alpha_i\in {\rm End}_{W/\mu^nW}(\varphi) \textrm{ such that } \alpha_i\alpha_j=\alpha_j\alpha_i \textrm{ for }1\leqslant i, j\leqslant t,  \\ f(\alpha_1,\cdots,\alpha_t)=0 \textrm{ for all } f\in I_{\mathcal{O}_K},  \\
 \newline \textrm{ and }  \partial(\alpha_i)\equiv \iota(s_i) \mod \mu^n  \textrm{ for all }1\leqslant i\leqslant t   &  \\
\end{array}
$
$\Bigg\},$} and  

{\center$\mathcal{M}_n=$ $\Bigg\{(\alpha_1,\cdots, \alpha_t)\ \bigg|$
$
\begin{array}{cc}
 \alpha_i\in {\rm End}_{W/\mu^nW}(\varphi) \textrm{ such that } \alpha_i\alpha_j=\alpha_j\alpha_i \textrm{ for }1\leqslant i, j\leqslant t,  \\ f(\alpha_1,\cdots,\alpha_t)=0 \textrm{ for all } f\in I_{\mathcal{O}_K}
\end{array}
$
$\Bigg\},$} 

\end{lem}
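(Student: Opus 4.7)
The plan is to establish each equality by building an explicit bijection between $S_n$ (respectively $\mathcal{M}_n$) and the set of tuples $(\alpha_1,\ldots,\alpha_t)$ on the right, using the presentation $\mathcal{O}_K\cong A[X_1,\ldots,X_t]/I_{\mathcal{O}_K}$ to translate ring-theoretic data about $\eta$ into coordinate-wise data about the images of the generators $s_1,\ldots,s_t$. I will treat the $\mathcal{M}_n$ case first; the $S_n$ case then follows by imposing the derivative condition termwise.

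For the forward direction, given $\eta\in \mathcal{M}_n$ I set $\alpha_i:=\eta(s_i)\in {\rm End}_{W/\mu^nW}(\varphi)$. Commutativity of $\mathcal{O}_K$ forces $\alpha_i\alpha_j=\alpha_j\alpha_i$, and for any $f\in I_{\mathcal{O}_K}$ the identity $f(s_1,\ldots,s_t)=0$ in $\mathcal{O}_K$ passes through $\eta$ to give $f(\alpha_1,\ldots,\alpha_t)=0$. For the $S_n$ variant, the condition $\partial\circ\eta\equiv\iota\bmod\mu^n$ reduces coordinate-wise to $\partial(\alpha_i)\equiv\iota(s_i)\bmod\mu^n$.

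For the reverse direction, starting from a tuple $(\alpha_1,\ldots,\alpha_t)$ satisfying the conditions, I first define $\tilde\eta\colon A[X_1,\ldots,X_t]\to {\rm End}_{W/\mu^nW}(\varphi)$ by $\tilde\eta|_A=\varphi\bmod\mu^n$ and $\tilde\eta(X_i)=\alpha_i$; this is a well-defined ring homomorphism because the $\alpha_i$ commute pairwise. The constraint $f(\alpha_1,\ldots,\alpha_t)=0$ for $f\in I_{\mathcal{O}_K}$ yields $I_{\mathcal{O}_K}\subseteq \ker(\tilde\eta)$, so $\tilde\eta$ descends to an $A$-algebra map $\eta\colon\mathcal{O}_K\to {\rm End}_{W/\mu^nW}(\varphi)$ with $\eta|_A=\varphi\bmod\mu^n$ and $\eta(s_i)=\alpha_i$; these two maps are visibly mutual inverses.

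The one place the argument is not routine is verifying that $\eta$ is an \emph{embedding}. I would argue as follows: $\ker(\eta)$ is an ideal of $\mathcal{O}_K$, and since $\mathcal{O}_K$ is integral over $A$, any nonzero $x\in\ker(\eta)$ satisfies a monic relation over $A$ whose (nonzero) constant term lies in $\ker(\eta)\cap A$. So it suffices to show $\varphi\colon A\to W/\mu^nW\{\tau\}$ is itself injective, and this is immediate from the shape $\varphi_T=T+\tau^r$: for any $0\neq a\in A$, the twisted polynomial $\varphi_a$ has $\tau$-degree $r\deg_T(a)$ with leading coefficient $1\in W/\mu^nW$, which is never zero. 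Hence $\ker(\eta)\cap A=0$, forcing $\ker(\eta)=0$. With injectivity secured, the bijection is complete, and the $S_n$ case follows by restricting to tuples also satisfying $\partial(\alpha_i)\equiv\iota(s_i)\bmod\mu^n$.
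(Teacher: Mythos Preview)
Your proof is correct and follows essentially the same approach as the paper: both directions of the bijection are set up identically, and the key nontrivial point—that the induced $\eta$ is injective—is handled by reducing to $\ker(\eta)\cap A=0$ and then observing that $\varphi_a\not\equiv 0\bmod\mu^n$ for $a\neq 0$. Your write-up is in fact more careful than the paper's, which asserts ``$\ker\eta\cap A$ is a nontrivial ideal of $A$'' without invoking integrality, and says ``contradiction'' without spelling out the leading-coefficient argument.
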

\begin{proof}

The characterization of $\M_n$ is immediate once that of $S_n$ is done. Now consider an $A$-algebra embedding $\eta: \mathcal{O}_K\rightarrow {\rm End}_{W/\mu^nW}(\varphi)$ in $S_n$. We assign $\alpha_i$ to be the image $\eta(s_i)$ of $s_i$ in ${\rm End}_{W/\mu^nW}(\varphi)$. Since $\eta$ is an $A$-algebra homomorphism, we have $\alpha_i\alpha_j=\alpha_j\alpha_i$ for all $1\leqslant i,j\leqslant t$. Now for $f(X_1,\cdots,X_t)\in I_{\mathcal{O}_K}$, we have
$$f(\alpha_1,\cdots, \alpha_t)=\eta(f(s_1,\cdots,s_t))=\eta(0)=0.$$
And the last condition holds because $\partial(\alpha_i)=\partial\circ \eta(s_i)\equiv \iota(s_i)\mod \mu^n$.

Conversely, for $(\alpha_1,\cdots,\alpha_t)$ satisfies the three conditions, we immediately have an $A$-algebra homomorphism $$\eta: \mathcal{O}_K\rightarrow {\rm End}_{W/\mu^nW}(\varphi)$$ generated by $$\eta(a)=\varphi_a \mod \mu^n \textrm{ for } a\in A,\textrm{ and }\eta(s_i)=\alpha_i \textrm{ for }1\leqslant i\leqslant t.$$ 
In fact, the homomorphism $\eta$ is an embedding. To prove this, we suppose on contrary that ${\rm ker}\eta $ is a nontrivial ideal of $\mathcal{O}_K$. Then ${\rm ker}\eta\cap A$ is a nontrivial ideal of $A$. Therefore, there is some $a\in A$ such that $\varphi_a\equiv 0 \mod \mu^n$, which is a contradiction. 

Therefore, we get a one-to-one correspondence between $S_n$ and the collection of $(\alpha_1,\cdots,\alpha_t)$ satisfying the three conditions.
\end{proof}
As a result, we have

\begin{cor}\label{estj}
Under the same assumptions as in Theorem \ref{main1}, with an additional restriction that 
\begin{enumerate}

\item[$\bullet$] $\fp=(\pi)$ is a prime ideal of $\F_q[T]$ whose degree is coprime to $r$.

\end{enumerate}

Then

\begin{align*}
{\rm{ord}}_\fp(J_{\mathcal{O}_K}^{(\delta_1,\cdots,\delta_{r-1})})&\geqslant \frac{(\sum_{i=1}^{r-1}\delta_i)(q-1)}{r_{\rm sep}\cdot(q^r-1)\cdot e_{K,\fp}}\sum_{m\geqslant 0} \#\mathcal{M}_{m\cdot e_{K,\fp}+1}. \\
\end{align*}
Here $
\#\M_{m\cdot e_{K,\fp}+1}=$ 

$\#\Bigg\{(\alpha_1,\cdots, \alpha_t)\in \mathcal{M}^t\ \bigg|$
$
\begin{array}{cc}
 \alpha_i(x_1,\cdots,x_r)\in\mathcal{M} \textrm{ satisfies } x_k\equiv 0 \mod \pi^{m} \textrm{ for } 2\leqslant k\leqslant r, \\ \alpha_i\alpha_j=\alpha_j\alpha_i \textrm{ for }1\leqslant i, j\leqslant t,  \\ f(\alpha_1,\cdots,\alpha_t)=0 \textrm{ for all } f\in I_{\mathcal{O}_K}  \\
 \end{array}
$
$\Bigg\}$
, and $
\mathcal{M}=\left\{   \left(\begin{array}{cccc}x_1 & x_2 & \cdots & x_r \\\pi\sigma(x_r) & \sigma(x_1) & \cdots & \sigma(x_{r-1}) \\\vdots &  & \ddots & \vdots \\ \pi\sigma^{r-1}(x_2) & \cdots & \pi\sigma^{r-1}(x_r) & \sigma^{r-1}(x_1)\end{array}\right)
             \ \bigg| \   x_i\in \F_{q^r}[T]    \textrm{ for all }i \right\}.           
$

\end{cor}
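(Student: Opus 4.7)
The plan is to assemble the corollary from the machinery already developed. The starting point is Theorem \ref{main1}, which under the running hypotheses on $K/F$ yields
$$
{\rm ord}_\fp(J_{\mathcal{O}_K}^{(\delta_1,\ldots,\delta_{r-1})}) \geqslant \frac{(\sum_{i=1}^{r-1}\delta_i)(q-1)}{\rsep(q^r-1)e_{K,\fp}} \sum_{n \geqslant 1} \#\M_n.
$$
The task is to rewrite the sum $\sum_n \#\M_n$ in the concrete combinatorial form appearing in the statement, using the added coprimality hypothesis on $\deg_T \pi$.

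First, I would invoke Proposition \ref{ssred}(ii): the assumption $\gcd(\deg_T \pi, r) = 1$ is precisely what forces $\varphi \bmod \fp$ to be supersingular, which in turn activates the matrix realization $\M = {\rm End}_{\bar{\F}_\fp}(\varphi)$ given by Proposition \ref{endomu}. Combining this with the structural description of higher-order reductions from Propositions \ref{punr} and \ref{pram}, I would identify ${\rm End}_{W/\mu^n W}(\varphi)$ as the subring of those matrices $M_{(x_1,\ldots,x_r)} \in \M$ with $x_k \equiv 0 \pmod{\pi^{z-1}}$ for $2 \leqslant k \leqslant r$, where $z = \lfloor (n + e_{K,\fp} - 1)/e_{K,\fp} \rfloor$. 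Setting $n = m \cdot e_{K,\fp} + 1$ yields $z - 1 = m$, which is exactly the divisibility threshold appearing in the corollary.

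Next, I would apply Lemma \ref{snmn} to translate an $A$-algebra embedding $\eta \colon \mathcal{O}_K \hookrightarrow {\rm End}_{W/\mu^n W}(\varphi)$ into a commuting $t$-tuple $(\alpha_1, \ldots, \alpha_t)$ of images $\alpha_i = \eta(s_i)$ subject to the defining relations $f(\alpha_1,\ldots,\alpha_t) = 0$ for all $f \in I_{\mathcal{O}_K}$. Combined with the matrix description from the previous step, this yields precisely the explicit combinatorial formula for $\#\M_{m \cdot e_{K,\fp}+1}$ recorded in the statement.

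Finally, because the congruence conditions defining $\M_n$ become no weaker as $n$ grows, the sequence $\{\#\M_n\}_{n \geqslant 1}$ is non-increasing; keeping only one representative $n = m \cdot e_{K,\fp} + 1$ from each block of width $e_{K,\fp}$ gives the lower bound
$$
\sum_{n \geqslant 1} \#\M_n \geqslant \sum_{m \geqslant 0} \#\M_{m \cdot e_{K,\fp} + 1},
$$
and substituting this into the inequality from Theorem \ref{main1} delivers the corollary. I expect the only minor obstacle to be bookkeeping around the compatibility of the matrix realization of Proposition \ref{endomu} with the canonical lifts described in Propositions \ref{punr} and \ref{pram}; once this identification inside $\M$ is granted, the remaining steps are essentially a substitution exercise.
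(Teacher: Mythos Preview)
Your proposal is correct and follows essentially the same route as the paper: start from Theorem \ref{main1}, use Proposition \ref{ssred}(ii) to trigger supersingularity and hence the matrix description of Propositions \ref{endomu}, \ref{punr}, \ref{pram}, translate embeddings into tuples via Lemma \ref{snmn}, and then pass to the subsum indexed by $n=m\cdot e_{K,\fp}+1$. The only cosmetic differences are that the paper splits explicitly into the unramified and ramified cases rather than handling both through the floor formula for $z$, and that your ``non-increasing'' remark is unnecessary for the final inequality (it is simply a subsum of non-negative terms; in fact the paper notes the endomorphism rings are constant on each block of length $e_{K,\fp}$).
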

\begin{proof}
If $\fp$ unramified in $K/F$, then the result comes directly from Theorem \ref{thm1}, Corollary \ref{sn}, and equality (5).

If $\fp$ ramifies in $K/F$. Then from equality (6) we have $${\rm End}_{W/\mu^n W}(\varphi)=\left\{M_{(x_1,\cdots,x_r)} \in \mathcal{M} \mid x_i\equiv 0 \mod \pi^{z-1}    \textrm{ for } 2\leqslant i\leqslant r    \right\},$$ $\textrm{ where } z:=\lfloor\frac{n+e_{K,\fp}-1}{e_{K,\fp}}\rfloor$. Therefore,  for $m\cdot e_{K,\fp}+1\leqslant n\leqslant (m+1)\cdot e_{K,\fp}$, 
$${\rm End}_{W/\mu^n W}(\varphi)={\rm End}_{W/\mu^{m\cdot e(\fq/\fp)+1} W}(\varphi).$$
This means that for an $\alpha\in {\rm End}_{W/\mu^n W}(\varphi)$, the constant term $\partial\circ\alpha$ is actually an element in $W/\mu^{m+1}W$. Now from Lemma \ref{snmn} we have
{\center$S_n=$ $\Bigg\{(\alpha_1,\cdots, \alpha_t)\ \bigg|$
$
\begin{array}{cc}
 \alpha_i\in {\rm End}_{W/\mu^nW}(\varphi) \textrm{ such that } \alpha_i\alpha_j=\alpha_j\alpha_i \textrm{ for }1\leqslant i, j\leqslant t,  \\ f(\alpha_1,\cdots,\alpha_t)=0 \textrm{ for all } f\in I_{\mathcal{O}_K},  \\
 \newline \textrm{ and }  \partial(\alpha_i)\equiv \iota(s_i) \mod \mu^n  \textrm{ for all }1\leqslant i\leqslant t   &  \\
\end{array}
$
$\Bigg\}.$}

Combining with Theorem \ref{thm1}, we get

$$
\begin{array}{ccc}
{\rm{ord}}_\fp(J_{\mathcal{O}_K}^{(\delta_1,\cdots,\delta_{r-1})})&\geqslant& \frac{(\sum_{i=1}^{r-1}\delta_i)(q-1)}{r_{\rm sep}\cdot(q^r-1)\cdot e_{K,\fp}}\sum_{n\geqslant 1} \#\mathcal{M}_n \\
\ \\
&\geqslant& \frac{(\sum_{i=1}^{r-1}\delta_i)(q-1)}{r_{\rm sep}\cdot(q^r-1)\cdot e_{K,\fp}}\sum_{m\geqslant 0} \#\mathcal{M}_{m\cdot e(\fq/\fp)+1}
\end{array}.
$$
Here $\mathcal{M}_{m\cdot e_{K,\fp}+1}$ is the collection of $(\alpha_1,\cdots, \alpha_t) \in \mathcal{M}^t$ such that $$\alpha_i(x_1,\cdots,x_r)\in {\rm End}_{W/\mu^{m\cdot e_{K,\fp}+1} W}(\varphi)=\left\{M_{(x_1,\cdots,x_r)} \in \mathcal{M} \mid x_i\equiv 0 \mod \pi^{m}    \textrm{ for } 2\leqslant i\leqslant r    \right\}$$
satisfy the conditions ``$\alpha_i$'s are mutually commutative with each other '' and ``$f(\alpha_1,\cdots,\alpha_t)=0$ for all $f\in I_{\mathcal{O}_K}$''. This completes the proof.

\end{proof}

\begin{rem}\label{estone}
When there is only one generator $s$ in the degree-$r$ imaginary extension $K/F$, and $\mathcal{O}_K=A[s]$, we can further simplify Corollary \ref{estj} into the following:
$$
{\rm{ord}}_\fp(J_{\mathcal{O}_K}^{(\delta_1,\cdots,\delta_{r-1})})\geqslant \frac{(\sum_{i=1}^{r-1}\delta_i)(q-1)}{r_{\rm sep}\cdot(q^r-1)\cdot e_{K,\fp}}\sum_{m\geqslant 0} \#\mathcal{M}_{m\cdot e_{K,\fp}+1} $$
, where
\begin{align*}
\#\mathcal{M}_{m\cdot e_{K,\fp}+1}=\#\Big\{\alpha_{(x_{1},\cdots, x_{r})} \in \mathcal{M}\mid x_i\equiv 0 \mod \pi^{m}    \textrm{ for } 2\leqslant i\leqslant r,\\  \textrm{ min. poly of } \alpha=\textrm{ min. poly of }s     \Big\}.
\end{align*}

The reason is because when $K=F(s)$ and the integral closure of $A$ in $K$ is $\mathcal{O}_K=A[s]$, we actually have
$$\mathcal{O}_K=A[s]=A[X]/(m(X)).$$
Here $m(X)\in A[X]$ is the minimal polynomial of $s$. Therefore, for an element $\alpha\in \mathcal{M}_n$, the condition ``$f(\alpha)=0$ for all $f\in I_{\mathcal{O}_K}$'' reduces to ``$m(\alpha)=0$''. Moreover, because $m(\alpha)=0$ we know that $\partial\circ \alpha$ is congruent to some conjugate of $s$ modulo $\mu^n$. This forces $m(X)$ to be not only the minimal polynomial of $s$, but also the minimal polynomial of $\alpha$ in the $A$-subalgebra $A[\alpha]$ of ${\rm End}_{W/\mu^nW}(\varphi)$. 

\end{rem}

\section{Examples}

In this section, we apply our estimation of  singular moduli to cubic imaginary extension case. For simplicity, let $q=p^e$ be an odd prime power such that $\F_q$ contains the cubic roots of unity, e.g. $q=7$. Let $K:=F(\sqrt[3]{\Delta})$ over $F=\F_q(T)$, where $\Delta\in A=\F_q[T]$ is cubic-free with $T$-degree coprime to $3$. Then $K/F$ is a Kummer extension, hence $K/F$ is normal. Suppose that $\mathcal{O}_K=A[\sqrt[3]{\Delta}]$.

 On the other hand, we know that at here $H=\F_{q^3}(T)$ is the Hilbert class field of $\varphi_T=T+\tau^3$, and $\mathcal{O}_H=\F_{q^3}[T]$. Let $\sigma: \alpha \mapsto \alpha^q$ be a generator of ${\rm Gal}(H/F)$. And let $\fp=(\pi)$ be a prime ideal of $A$ with $\deg_T(\pi)$ coprime to $3$, then $\fp$ is a supersingular reduction prime of $\varphi$.

From Remark \ref{estone}, we have
$$
\begin{array}{ccc}
{\rm{ord}}_\fp(J_{\mathcal{O}_K}^{(\delta_1,\delta_2)})&\geqslant \frac{(\delta_1+\delta_2)(q-1)}{r_{\rm sep}\cdot (q^3-1)\cdot e_{K,\fp}} \sum_{m\geqslant 0} \#\mathcal{M}_{m\cdot e_{K,\fp}+1} . 

\end{array}
$$
And we know that the counting number of elements in $\M_n$ for $n\geqslant 1$ an integer is equal to count the number of matrices

$$\left(\begin{array}{ccc}x_1 & x_2  & x_3 \\ \pi\sigma(x_3) & \sigma(x_1) & \sigma(x_2) \\\pi\sigma^{2}(x_2) & \pi\sigma^{2}(x_3) & \sigma^{2}(x_1)\end{array}\right)
,$$
whose minimal polynomial is equal to $X^3-\Delta$. Here $x_1\in \mathcal{O}_H$, and $ x_i=\pi^{n-1}x_i' $ with $x_i'\in\mathcal{O}_H$ for $2\leqslant i\leqslant 3$. Note that $\pi\in A[T]$ is the monic generator of $\fp$, so $\pi$ is invariant under the action of $\sigma$. Comparing the coefficients of the minimal polynomial of such matrices with the minimal polynomial $X^3-\Delta$ of $\sqrt[3]{\Delta}$, we get three equations

$$ {\rm Tr}(x_1)=0, $$
$$x_1\sigma(x_1)+x_1\sigma^2(x_1)+\sigma(x_1)\sigma^2(x_1) = \pi^{2n+1}[\sigma^2(x_2')x_3'+\sigma(x_2')\sigma^2(x_3')+x_2'\sigma(x_3')], $$ 
and
$$\begin{array}{lll}\Delta&=&{\rm Norm}(x_1)+\pi^{3n-2}{\rm Norm}(x_2')+\pi^{3n-1}{\rm Norm}(x_3')\\&&-\pi^{2n-1}[\sigma(x_1)\sigma^2(x_2')x_3'+x_1\sigma(x_2')\sigma^2(x_3')+\sigma^2(x_1)x_2'\sigma(x_3')]\end{array}.$$

After some simplification, we get

\[
\left\{
\begin{array}{ll}
 {\rm Tr}(x_1)=0 & \refstepcounter{equation}(7)\\
 \ &\ \\
 {\rm Tr}(x_1\sigma(x_1))=-\frac{1}{2}{\rm Tr}(x_1^2) = \pi^{2n+1}{\rm Tr(x_2'\sigma(x_3'))} & \refstepcounter{equation}(8)\\
 \ &\ \\
 
 {\rm Norm}(x_1)+\pi^{3n-2}{\rm Norm}(x_2')+\pi^{3n-1}{\rm Norm}(x_3')-\pi^{2n-1}{\rm Tr}(x_1\sigma(x_2')\sigma^2(x_3'))=\Delta& \refstepcounter{equation}(9)

\end{array}
\right.
.\]

Hence $\# \M_n= \#\{(x_1,x_2',x_3')\in \F_{q^3}[T]^3\mid (x_1,x_2',x_3') \textrm{ satisfies equation (4) to (6)} \}$

\begin{rem}
One can compare the equations (7) to (9) with the Diophantine equation in \cite{D91}, Lemma 5.8. Since we don't have a nice discriminant equation comparing to the rank-$2$ case, we don't know whether the equations (7) to (9) can be further simplified into a system of Diophantine equations.

\end{rem}

\subsection{The case  \texorpdfstring $\ \Delta=\pi=T$}
\subsubsection{Separable case}

We further restrict $q$ to be an odd prime power that is coprime to $3$. Before we start the computation, one can check directly that $\mathcal{O}_K=A[\sqrt[3]{T}]$. Moreover, $\mathcal{O}_K$ is a principal ideal domain, hence there is only one isomorphism class $[\phi]$ in ${\rm{CM}}(\mathcal{O}_K,\iota)$.

For simplicity, let's set $\theta=\sqrt[3]{T}$. Let $\phi'_\theta=\theta+\tau$ be the rank-$1$ Drinfeld $\mathcal{O}_K-$module. From $\phi'$, we can construct a Drinfeld $A-$module $\phi$ which has CM by $\mathcal{O}_K$.
$$
\begin{array}{lll}
\phi_T&=&\phi'_\theta\cdot\phi'_\theta\cdot\phi'_\theta\\
&=&T+[\theta^2+\theta^{q+1}+\theta^{2q}]\tau+[\theta+\theta^q+\theta^{q^2}]\tau^2+\tau^3
\end{array}.$$

Hence ${\rm{CM}}(\mathcal{O}_K,\iota)=\{[\phi]\}.$

Now we show under a suitable choice of $J^{(\delta_1,\delta_2)}$, the equality in the bound can be reached. Take $$J^{(\delta_1,\delta_2)}=J^{(0,q^2+q+1)}=\frac{g_2^{q^2+q+1}}{g_3^{q+1}}.$$

We have ${\rm{ord}}_\fp(J_{\mathcal{O}_K}^{(0,q^2+q+1)})=\frac{q^2+q+1}{3}$ by direct computation. On the other hand, by counting $T$-degree on both sides of equation (12),  one can see that such a solution $(x_1,x_2',x_3')$ exists only when $n=1$. Moreover, one can pick triples $(0,x_2',0)$ where $x_2'\in\F_{q^3}[T]$ has norm equal to $1$. These triples satisfy equation (7) to (9) when $n=1$, and there are $q^2+q+1$ many such triples. Thus we have
$$\sum_{m\geqslant 0} \#\mathcal{M}_{m\cdot e_{K,\fp}+1}=\#\M_1\geqslant q^2+q+1.$$
Therefore, 
$$\frac{q^2+q+1}{3}={\rm{ord}}_\fp(J_{\mathcal{O}_K}^{(0,q^2+q+1)})\geqslant\frac{(\delta_1+\delta_2)(q-1)}{3\cdot (q^3-1)\cdot 3}\sum_{n\geqslant 1} \#\M_n=\frac{1}{9}\cdot \#\M_1\geqslant\frac{q^2+q+1}{9}.$$  

The difference between both sides is because of the term ``$r_{\rm sep}$'' in Remark \ref{estone}. From Remark \ref{snequal} and the fact that $x^3-T\equiv x^3 \mod \sqrt[3]{T}$ has only one root,  we actually have $$\#S_1=\frac{\#\M_1}{\# \textrm{ of disjoint subsets }\{f\in\mathcal{M}_n\mid \partial\circ f=\zeta\mod \sqrt[3]{T}\} \textrm{ in }\M_1}=\frac{\#\M_1}{1},$$
while our lower bound in Remark \ref{estone} takes the smallest possible number $$\# S_1\geqslant\frac{1}{r_{\rm sep}}\#\M_1=\frac{\#\M_1}{3}.$$

\subsubsection{Inseparable case}

We consider the above example in the case when $q$ is a $3$-power, i.e. $F=\F_q(T)$ and $K=F(\sqrt[3]{T})$. It is easy to see that $K/F$ is a purely inseparable, normal, and imaginary extension over $F$. Set $\theta=\sqrt[3]{T}$. Again, because $\mathcal{O}_K=A[\theta]$ is a principal ideal domain, there is only one isomorphism class in ${\rm CM}(\mathcal{O}_K,\iota)$. The Drinfeld $A-$module $\phi$ defined by
$$
\begin{array}{lll}
\phi_T&=&T+[\theta^2+\theta^{q+1}+\theta^{2q}]\tau+[\theta+\theta^q+\theta^{q^2}]\tau^2+\tau^3
\end{array}.$$
which has CM by $\mathcal{O}_K$. Hence ${\rm{CM}}(\mathcal{O}_K,\iota)=\{[\phi]\}.$ 

Now we know that ${\rm{ord}}_\fp(J_{\mathcal{O}_K}^{(0,q^2+q+1)})=\frac{q^2+q+1}{3}$ by direct computation. From the same argument as in the separable case, we have
$$\sum_{m\geqslant 0} \#\mathcal{M}_{m\cdot e_{K,\fp}+1}=\#\M_1\geqslant q^2+q+1.$$
Thus we can compute from Corollary \ref{estj} and get

$$\frac{q^2+q+1}{3}={\rm{ord}}_\fp(J_{\mathcal{O}_K}^{(0,q^2+q+1)})\geqslant\frac{(\delta_1+\delta_2)(q-1)}{1\cdot (q^3-1)\cdot 3}\sum_{n\geqslant 1} \#\M_n\geqslant\frac{q^2+q+1}{3}.$$  
The equality is reached in this case. Thus we actually have
$$\sum_{n\geqslant 1} \#\M_n=\#\M_1= q^2+q+1.$$

Now we can replace another basic $J$-invariant and apply our estimation again. One can see that even the inequality is sharp for some basic $J$-invariant, the equality does not hold for all basic $J$-invariant.  Let us choose the basic $J$-invariant $J^{(q^2+q+1,0)}$, then one have

$$\frac{2(q^2+q+1)}{3}={\rm{ord}}_\fp(J_{\mathcal{O}_K}^{(0,q^2+q+1)})>\frac{(\delta_1+\delta_2)(q-1)}{1\cdot (q^3-1)\cdot 3}\sum_{n\geqslant 1} \#\M_n=\frac{q^2+q+1}{3}.$$

\subsection{The case  \texorpdfstring$\ \Delta=T(T+1)$, and $\pi=T$}
Again, one can check that $\mathcal{O}_K=A[\sqrt[3]{T^2+T}]$ by Theorem 1.1 in \cite{T01}.

Then the set $$\mathcal{B}:=\{(0,\beta,\gamma) \mid \beta, \gamma \in \F_{q^3}[T] \textrm{ with } {\rm Norm}(\beta)={\rm Norm}(\gamma)=1 \textrm{ and } {\rm Tr}(\beta\sigma(\gamma))=0\}$$ satisfy equation (7) to (9) when $n=1$. Furthermore, we have
$$\mathcal{B}\supset \{(0,\beta,1) \mid {\rm Norm}(\beta)=1, {\rm Tr}(\beta)=0\}\sqcup \{(0,1,\gamma) \mid {\rm Norm}(\gamma)=1, {\rm Tr}(\gamma)=0\}.$$

On the other hand, from Katz's estimation on Soto-Andrade sum (see Theorem 1.1 in \cite{MW10}), the number $N_3(0,1)$ of elements in $\F_{q^3}$ with norm equal to $1$ and trace equal to $0$ is bounded by the following inequality:
$$|N_3(0,1)-\frac{q^2-1}{q-1}|\leqslant g.c.d.(3,q-1)\sqrt{q}.$$

Hence we have 
$$N_3(0,1)\geqslant q+1-g.c.d(3,q-1)\sqrt{q},$$
which implies
$$
\begin{array}{lll}
\#\mathcal{B}&\geqslant& \#\{(0,\beta,1) \mid {\rm Norm}(\beta)=1, {\rm Tr}(\beta)=0\}+\# \{(0,1,\gamma) \mid {\rm Norm}(\gamma)=1, {\rm Tr}(\gamma)=0\} \\
\ &\geqslant &2N_3(0,1)\\
\ &\geqslant &2(q+1-g.c.d(3,q-1)\sqrt{q})
\end{array}.
$$

Therefore, we get

$$
\begin{array}{lll}
\nu_\fp(J_{\mathcal{O}_K}^{(\delta_1,\delta_2)})&\geqslant& \frac{(q+1)(q-1)}{3\cdot(q^3-1)\cdot 3}\sum_{m\geqslant 0} \#\mathcal{M}_{m\cdot e_{K,\fp}+1}\\
\ &\ &\ \\
\ &\geqslant& \frac{(q+1)(q-1)}{9\cdot(q^3-1)}\cdot 2(q+1-g.c.d(3,q-1)\sqrt{q})\\
\ &\ &\ \\
\ & =&\frac{2(q+1)(q+1-g.c.d(3,q-1)\sqrt{q})}{9\cdot(q^2+q+1)}

\end{array}.
$$

\section*{Acknowledgement}

The author would like to thank Professor Mihran Papikian and Professor Fu-Tsun Wei for helpful and inspiring discussions to carry out this paper.

\bibliographystyle{alpha}
\bibliography{singmodhrk_rev4.bib}

\end{document}